\documentclass[11pt]{article}

\usepackage{color}
\usepackage{amsfonts}
\usepackage{amscd}
\usepackage{amssymb}
\usepackage{amsthm}
\usepackage{amsmath, xspace}
\usepackage[all]{xy}
\usepackage{graphics}
\usepackage{graphicx}
\usepackage{enumitem}
\usepackage{fancyhdr}
\pagestyle{fancy}

\usepackage{lscape}

\theoremstyle{plain}
\newtheorem{thm}{Theorem}[section]

\newtheorem{prop}[thm]{Proposition}

\theoremstyle{definition}

\newtheorem{remark}[thm]{Remark}
\theoremstyle{example}

\theoremstyle{remark}

\numberwithin{equation}{section}

\setlength{\evensidemargin}{1in} \addtolength{\evensidemargin}{-1in}
\setlength{\oddsidemargin}{1.5in}
\addtolength{\oddsidemargin}{-1.5in} \setlength{\topmargin}{1in}
\addtolength{\topmargin}{-1.5in}

\setlength{\textwidth}{16cm} \setlength{\textheight}{23cm}

\providecommand{\keywords}[1]{\textbf{\textit{Key words---}} #1}

\def\cD{\mathcal{D}}

\def\cF{\mathcal{F}}

\def\cL{\mathcal{L}}

\def\cO{\mathcal{O}}
\def\cP{\mathcal{P}}

\def\CC{\mathbb{C}}

\def\FF{\mathbb{F}}

\def\QQ{\mathbb{Q}}
\def\RR{\mathbb{R}}

\def\ZZ{\mathbb{Z}}

\newcommand\bbZ{\mathbb{Z}}

\def\fa{\mathfrak{a}}
\def\fb{\mathfrak{b}}

\def\fg{\mathfrak{g}}
\def\fh{\mathfrak{h}}

\def\fgl{\mathfrak{gl}}
\def\fsl{\mathfrak{sl}}

\def\Card{\mathrm{Card}}

\def\dim{\mathrm{dim}}

\def\Hom{\mathrm{Hom}}

%%%%% Diagrams %%%%% 
\usepackage{etex} %fixes the fight that pictex has with every other drawing package
\usepackage{pictexwd}
\usepackage{tikz}
\usepgflibrary{shapes.geometric}
\usetikzlibrary{arrows}

\tikzstyle{V}=[draw, fill =black, circle, inner sep=0pt, minimum size=1.5pt]
\tikzstyle{wV}=[draw, fill =white, circle, inner sep=0pt, minimum size=4.5pt]
\tikzstyle{bV}=[draw, fill =black, circle, inner sep=0pt, minimum size=4.5pt]
\tikzstyle{over}=[draw=white,double=black,line width=2pt, double distance=.5pt]

%BRAIDS:
\def\Over[#1,#2][#3,#4]{ %1,2=start position; 3,4=end position
	\draw[style=over]   (#1,#2) .. controls ++(0,#4*.5-#2*.5) and ++(0,-#4*.5+#2*.5) .. (#3,#4);}
\def\Under[#1,#2][#3,#4]{ %1,2=start position; 3,4=end position
	\draw  (#1,#2) .. controls ++(0,#4*.5-#2*.5) and ++(0,-#4*.5+#2*.5) .. (#3,#4);}
\def\Cross[#1,#2][#3,#4]{%Mimic over, under follows
	\Under[#3,#2][#1,#4]\Over[#1,#2][#3,#4]}

\def\Tops[#1][#2][#3]{%1=pole locations, 2=top, 3=k 
	\foreach\x in {#1}{
		\draw (\x+.1,#2) -- (\x+.1,#2+.15) (\x-.1,#2) -- (\x-.1,#2+.15) ;
		\draw (\x+.1,#2+.15) arc (0:360:1mm and .5mm);}
	%Nodes 
	\foreach \x in {1,...,#3} {\draw (\x,#2)  to (\x,#2+.05) node[V]{};}
	}
\def\Bottoms[#1][#2][#3]{%1=pole locations, 2 = bottom, 3=top 
	\foreach\x in {#1}{
		\draw (\x+.1,#2) -- (\x+.1,#2-.1) (\x-.1,#2) -- (\x-.1,#2-.1) ;
		\draw (\x+.1,#2-.1) arc (0:-180:1mm and .5mm);}
	%Nodes 
	\foreach \x in {1,...,#3} {\draw (\x,#2)  to (\x,#2-.05) node[V]{};}
	}
\def\Caps[#1][#2,#3][#4]{%1=pole location, 2 = bottom, 3=top, 4=k 
	\Tops[#1][#3][#4]
	\Bottoms[#1][#2][#4]
	}
\def\Pole[#1][#2,#3]{%1=horizontal location, 2 = bottom, 3=top
	\shade[left color=white,right color=white] (#1+.1,#2) rectangle (#1-.1,#3);
	\draw[over] (#1+.1,#2) to (#1+.1,#3) (#1-.1,#2) to (#1-.1,#3) ;}
\def\Label[#1,#2][#3][#4]{%1,2 = top/bot position, 3=i, 4=label
	\node[above] at (#3,#2+.1) {#4};
	\node[below] at (#3,#1-.1) {#4};		}

%************Commutative diagrams**********************

\def\mapright#1{\smash{\mathop
        {\longrightarrow}\limits^{#1}}}

\makeatletter
\renewcommand{\@makefnmark}{\mbox{\textsuperscript{}}}
\makeatother

\title{A Fock space model for decomposition numbers for\\
quantum groups at roots of unity}
\author{
Martina Lanini \ \ email:\ lanini@mat.uniroma2.it \\
%\\
%School of Mathematics \\
%University of Edinburgh
%James Clerk Maxwell Building
%Edinburgh EH9 3FD 
Arun Ram\quad\ \ email:\ aram@unimelb.edu.au \\
Paul Sobaje\quad\qquad\ \ email:\ sobaje@uga.edu \\
%\\
%Department of Mathematics and Statistics \\
%University of Melbourne \\
%Parkville VIC 3010 Australia \\
%martina.lanini@unimelb.edu.au \\ aram@unimelb.edu.au \\ paul.sobaje@unimelb.edu.au
% \\
\\
}
\date{}

\lhead{A Fock Space model for quantum groups at a root of unity}
\rhead{}

\begin{document}

\maketitle

\begin{abstract}
\noindent
In this paper we construct an ``abstract Fock space" for general Lie types that serves as a generalisation of the infinite wedge $q$-Fock space familiar in type $A$.  Specifically, for each positive integer $\ell$, we define a $\ZZ[q,q^{-1}]$-module $\mathcal{F}_{\ell}$ with bar involution by specifying generators and ``straightening relations" adapted from those appearing in the Kashiwara-Miwa-Stern formulation of the $q$-Fock space.  By relating $\mathcal{F}_{\ell}$ to the corresponding affine Hecke algebra we show that the abstract Fock space has standard and canonical bases for which the transition matrix produces parabolic affine Kazhdan-Lusztig polynomials.  This property and the convenient combinatorial labeling of bases of $\mathcal{F}_{\ell}$ by dominant integral 
weights makes $\cF_\ell$ a useful combinatorial tool for determining decomposition numbers of 
Weyl modules for quantum groups at roots of unity.
\end{abstract}

\keywords{Quantum groups, Fock space, Hecke algebra, multiplicity formulas}
\footnote{AMS Subject Classifications: Primary 17B37; Secondary  20C20.}

\setcounter{section}{-1}
%\setcounter{tocdepth}{4}
%\tableofcontents

\section{Introduction}

The classical Fock space arises in the context of mathematical physics, where one would like to describe the behaviour of certain configurations with an unknown number of identical, non-interacting particles. It is  a (non-irreducible) representation of the affine Lie algebra $\widehat{\mathfrak{sl_n}}$.  The book \cite{MJD}, 
for example, is an inspiring and friendly tour of applications and connections between this representation, 
integrable systems, hierarchies of differential equations and infinite dimensional Grassmannians.

Combinatorial models have proven to be incredibly useful in studying the representations of various algebraic objects, such as affine Lie algebras, algebraic groups, Lie algebras,  quantum groups and symmetric groups.  Often the goal is to express simple modules in terms of ``standard'' modules (modules whose dimensions and formal characters are computable).

In a wonderful confluence of these two points of view, Lascoux-Leclerc-Thibon \cite{LLT} predicted a connection
between Hayashi's $q$-Fock space \cite{Ha} and decomposition numbers for representations of type A 
Iwahori-Hecke algebras at roots of unity. The LLT conjecture was proved in about 1995, see the Seminar Bourbaki survey of Geck \cite{Ge}.  The book of Kleshchev \cite{Kl} shows how successful these methods
have been in the study of the modular representation theory of symmetric groups.

This paper arose from an effort to produce an object analogous to the $q$-Fock space that will 
play the same role in other Lie types, in particular which will be related to the decomposition numbers 
for representations of cyclotomic BMW algebras in the same way that the type A case is related to representations 
of cyclotomic Hecke algebras.

In this paper, we provide a construction of an ``abstract'' Fock space $\cF_\ell$ in a general Lie type setting.
Our construction is given by simple combinatorial ``straightening relations'' which
generalize the Kashiwara-Miwa-Stern \cite{KMS} formulation of the $q$-Fock space from the
type A case.  Adapting the methods used by Leclerc-Thibon \cite{LT} for the type A case, we prove that our 
abstract Fock space picks up  the parabolic affine Kazhdan-Lusztig polynomials for the corresponding 
affine Hecke algebra of the affine Weyl group (thus generalizing type A results of Varagnolo-Vasserot
\cite{VV}).
By a combination of the results of Kashiwara-Tanisaki \cite{KT95} and
Kazhdan-Lusztig \cite{KL94} and Shan \cite{Sh}, 
these parabolic affine Kazhdan-Lusztig polynomials are graded decomposition numbers of
Weyl modules for the corresponding affine Lie algebra at negative level  
and for the quantum group at a root of unity.

A combinatorial study of the same parabolic affine Kazhdan-Lusztig polynomials was carried out also in \cite{GW},
 where the authors provided an efficient algorithm which generalizes the algorithm appearing to \cite{LLT} to arbitrary Lie type.
The focus of \cite{GW} was the combinatorial understanding of such polynomials rather than 
the construction of a tool that can play the same role for other Lie types that the infinite wedge
space takes in the type A case.

In Section 1 we give the simple construction of the general Lie type ``abstract Fock space'' $\cF_\ell$.  We
then explain exactly how this general construction relates to the classical type A setting, the framework
of Kashiwara-Miwa-Stern and the familiar formulations in terms of semi-infinite wedges, partitions and Maya diagrams.
In Section 2 we give an expository treatment of modules with bar involution, general bar-invariant KL-bases, 
and the construction of KL-polynomials for Hecke algebras, including the singular, parabolic and
parabolic-singular cases.  Although this material is well known (see, for example, \cite{Soe97}, \cite{Lu83}, \cite{Lu90b}, \cite{Du}) it is crucial for us to set
this up in a form suitable for connecting to the abstract Fock space so that we can eventually see the parabolic affine KL-polynomials
in the abstract Fock space $\cF_\ell$.  In Section 3 we review the results of Kashiwara-Tanisaki, Kazhdan-Lusztig and Shan
and concretely connect the decomposition numbers for Weyl modules of affine Lie algebras at negative level and 
quantum groups at roots of unity to the parabolic and parabolic-singular KL polynomials that have been treated in Section 2.
In Section 4, we prove that a certain module with bar involution which is constructed from the affine Hecke algebra
is isomorphic to the abstract Fock space $\cF_\ell$.  This is the key step for proving that the abstract Fock space
picks up the appropriate parabolic and parabolic-singular  affine KL-polynomials.  
Finally, at the end of section 4 we tie together
the results of Section 3 and 4 to conclude that the abstract Fock space, a combinatorial construct, computes the 
decomposition numbers of Weyl modules for quantum groups at roots of unity.

Our construction is an important first step in providing combinatorial tools for general Lie type that
are direct analogues of the tools that have been so useful in the Type A case.    There is much to be done.
In particular,  we hope that in the future someone will complete the following:
\begin{enumerate}
\item[(a)] Development of the combinatorics of $\cF_\ell$ in parallel to the way it is used in the
type A case (see, for example, Kleshchev's book \cite{Kl}) to provide a ``theory of crystals'' for other types
which applies to the representation theory of the cyclotomic BMW algebras in the same way that the 
classical crystal theory applies to the modular representation theory of cyclotomic Hecke algebras.
\item[(b)] Provide operators on $\cF_\ell$ analogous to the $U_q\widehat\fsl_\ell$ action on $\cF_\ell$
in the type A case.  Taking the point of view of \cite{RT} these operators are the (graded Grothendieck
group) images of translation functors for representations of the quantum group at a root of unity.  There
is significant evidence (see, for example, \cite{ES13}, \cite{BW}, \cite{BSWW} and \cite{FLLLW}) leading one to expect that in the type 
$B,C$ and $D$ cases these operators will
provide actions of  coideal quantum groups on $\cF_\ell$. 
\item[(c)] Elias-Williamson \cite{EW} introduced the diagrammatic Hecke category $\cD_{\mathrm{BS}}$ over a field, which in characteristic zero provides a generators and relations presentation of the Soergel bimodule category. It is expected \cite[Conjecture 5.1]{RW} that a regular block $\mathrm{Rep}_0(G(\overline{\FF_p}))$ is equipped with  an action of the category $\cD_{\mathrm{BS}}$ over $\overline{\FF_p}$. This conjecture can be viewed as a (categorical) extension of the project described in (b).
Indeed, our abstract Fock space $\cF_p$ is designed to be a decategorification of 
$\mathrm{Rep}(G(\overline{\FF_p}))$.
For the type A case, Riche-Williamson \cite{RW} have used the $U(\widehat\fgl_p)$-action on $\cF_p$
(in its infinite wedge space formulation) to prove their conjecture and hence to show that the $p$-canonical basis corresponds to the 
indecomposable tilting modules in $\mathrm{Rep}_0(G(\overline{\FF_p}))$.  It is possible that our abstract Fock space $\cF_p$ could be a useful
tool for generalizing the results of \cite{RW} to other Lie types in a uniform fashion (taking care also of singular blocks).
\end{enumerate}

It is a pleasure to thank all the institutions which have supported our work on this paper, 
including especially the University of Melbourne,  the Australian Research Council (grants DP1201001942 and DP130100674) and ICERM (Institute for Computational and Experimental Research in Mathematics). M.L. would like to thank the University of Edinburgh, which  supported her research during the final part of this project.

\section{The abstract Fock space}

\subsection{Fock space $\cF_\ell$}\label{subsectionFelldefin}

Let $W_0$ be a finite Weyl group, generated by simple reflections
$s_1, \ldots, s_n$, and acting on a lattice of weights $\fa_\ZZ^*$.  
For example, this situation arises when $T$ is a maximal torus of a reductive algebraic group $G$,
\begin{equation}
\fa_\ZZ^* = \Hom(T,\CC^\times)
\qquad\hbox{and}\qquad
W_0 = N(T)/T,
\label{wtsWeylgpdefn}
\end{equation}
where $N(T)$ is the normalizer of $T$ in $G$.  The simple reflections
in $W_0$ correspond to a choice of Borel subgroup $B$ of $G$ which contains $T$.
Let $R^+$ denote the positive roots.  Let $\alpha_1, \ldots, \alpha_n$ be the simple roots and let
$\alpha_1^\vee, \ldots, \alpha_n^\vee$ be the simple coroots.
The \emph{dot action} of $W_0$ on $\fa_\ZZ^*$ is given by
\begin{equation}
w\circ\lambda = w(\lambda+\rho)-\rho,
\qquad\hbox{where}\quad \rho = \hbox{$\frac12$}\sum_{\alpha\in R^+} \alpha
\label{dotaction}
\end{equation}
is the half sum of the positive roots for $G$ (with respect to $B$).

Fix $\ell\in \ZZ_{>0}$.  The \emph{Fock space} $\mathcal{F}_\ell$ is
the $\ZZ[t^{\frac12},t^{-\frac12}]$-module generated by $\{ \vert \lambda\rangle \ |\ \lambda\in \fa_{\ZZ}^{*}\}$ with relations
\begin{equation}
\vert s_i\circ\lambda\rangle=\begin{cases}
-\vert \lambda\rangle, &\hbox{if $\langle\lambda+\rho,\alpha_i^\vee\rangle \in \ell\ZZ_{\ge 0}$,} \\
-t^{\frac12}\vert \lambda\rangle, &\hbox{if $0<\langle\lambda+\rho,\alpha_i^\vee\rangle<\ell $,} \\
-t^{\frac12}\vert s_i\circ\lambda^{(1)} \rangle -\vert \lambda^{(1)} \rangle -t^{\frac12}\vert \lambda\rangle, 
&\hbox{if $ \langle\lambda+\rho,\alpha_i^\vee\rangle > \ell$ and $\langle\lambda+\rho,\alpha_i^\vee\rangle\not\in \ell\ZZ$,}
\end{cases}
\label{Fstraightening}
\end{equation}
where 
$\lambda^{(1)} = \lambda - j\alpha_i$
if $\langle\lambda+\rho, \alpha_i^\vee\rangle = k\ell + j$ with $k\in \ZZ_{>0}$ and $j\in \{1, \ldots, \ell-1\}$.

The following picture illustrates the terms in \eqref{Fstraightening}.  This is the case $G=SL_2$ with $\ell=5$, 
$\langle\omega_1,\alpha_1^\vee\rangle=1$ and $\alpha_1=2\omega_1$ and, in the picture,
$\lambda$ corresponds to the third case of \eqref{Fstraightening}, $\mu$ to the first case and $\nu$ to the second case.
$$\setlength{\unitlength}{0.5cm}
\begin{picture}(23,3)
\put(-5,2){\line(1,0){32}}

\put(-4,2){\circle*{0.3}}
\put(-3,2){\circle*{0.3}}
\put(-2,2){\circle*{0.3}}
\put(-1,2){\circle*{0.3}}
\put(0,2){\circle*{0.3}}
\put(1,2){\circle*{0.3}}
\put(2,2){\circle*{0.3}}
\put(3,2){\circle*{0.3}}
\put(4,2){\circle*{0.3}}
\put(5,2){\circle*{0.3}}
\put(6,2){\circle*{0.3}}
\put(7,2){\circle*{0.3}}
\put(8,2){\circle*{0.3}}
\put(9,2){\circle*{0.3}}
\put(10,2){\circle*{0.3}}
\put(11,2){\circle*{0.3}}
\put(12,2){\circle*{0.3}}
\put(13,2){\circle*{0.3}}
\put(14,2){\circle*{0.3}}
\put(15,2){\circle*{0.3}}
\put(16,2){\circle*{0.3}}
\put(17,2){\circle*{0.3}}
\put(18,2){\circle*{0.3}}
\put(19,2){\circle*{0.3}}
\put(20,2){\circle*{0.3}}
\put(21,2){\circle*{0.3}}
\put(22,2){\circle*{0.3}}
\put(23,2){\circle*{0.3}}
\put(24,2){\circle*{0.3}}
\put(25,2){\circle*{0.3}}
\put(26,2){\circle*{0.3}}

\put(-4.8,0.8){$\scriptstyle{-14}$}
\put(-3.8,0.8){$\scriptstyle{-13}$}
\put(-2.8,0.8){$\scriptstyle{-12}$}
\put(-1.8,0.8){$\scriptstyle{-11}$}
\put(-0.8,0.8){$\scriptstyle{-10}$}
\put(0.4,0.8){$\scriptstyle{-9}$}
\put(1.4,0.8){$\scriptstyle{-8}$}
\put(2.4,0.8){$\scriptstyle{-7}$}
\put(3.4,0.8){$\scriptstyle{-6}$}
\put(4.4,0.8){$\scriptstyle{-5}$}
\put(5.4,0.8){$\scriptstyle{-4}$}
\put(6.4,0.8){$\scriptstyle{-3}$}
\put(7.4,0.8){$\scriptstyle{-2}$}
\put(8.4,0.8){$\scriptstyle{-\rho}$}
\put(9.85,0.8){$\scriptstyle{0}$}
\put(10.85,0.8){$\scriptstyle{\omega_1}$}
\put(11.85,0.8){$\scriptstyle{2}$}
\put(12.85,0.8){$\scriptstyle{3}$}
\put(13.85,0.8){$\scriptstyle{4}$}
\put(14.85,0.8){$\scriptstyle{5}$}
\put(15.85,0.8){$\scriptstyle{6}$}
\put(16.85,0.8){$\scriptstyle{7}$}
\put(17.85,0.8){$\scriptstyle{8}$}
\put(18.85,0.8){$\scriptstyle{9}$}
\put(19.75,0.8){$\scriptstyle{10}$}
\put(20.75,0.8){$\scriptstyle{11}$}
\put(21.75,0.8){$\scriptstyle{12}$}
\put(22.75,0.8){$\scriptstyle{13}$}
\put(23.75,0.8){$\scriptstyle{14}$}
\put(24.75,0.8){$\scriptstyle{15}$}
\put(25.75,0.8){$\scriptstyle{16}$}

\put(-1,1.2){\line(0,1){1.5}}
\put(4,1.2){\line(0,1){1.5}}
\put(9,1.2){\line(0,1){1.5}}
\put(14,1.2){\line(0,1){1.5}}
\put(19,1.2){\line(0,1){1.5}}
\put(24,1.2){\line(0,1){1.5}}

%\put(8.8,3.2){$\fh^{\alpha^\vee_1}$}
%\put(3.8,3.2){$\fh^{\alpha^\vee_0}$}

\put(19.75,2.3){$\scriptstyle{\lambda}$}
\put(17.75,2.3){$\scriptstyle{\lambda^{(1)}}$}
\put(-0.8,2.3){$\scriptstyle{s_1\circ\lambda^{(1)} }$}
\put(-2.8,2.3){$\scriptstyle{s_1\circ\lambda}$}

\put(18.75,3){$\scriptstyle{\mu}$}
\put(-1.8,3){$\scriptstyle{s_1\circ\mu}$}

\put(11.75,2.3){$\scriptstyle{\nu}$}
\put(5.4,2.3){$\scriptstyle{s_1\circ\nu}$}

%\put(-4,2.8){$s_0s_1$}
%\put(1,2.8){$s_0$}
%\put(6,2.8){$1$}
%\put(11,2.8){$s_1$}
%\put(15.8,2.8){$s_1s_0$}
%\put(20.5,2.8){$s_1s_0s_1$}

\end{picture}
$$
Define a $\ZZ$-linear involution 
$\overline{\phantom{T}}\colon \mathcal{F}_{\ell}\to \mathcal{F}_{\ell}$ by
\begin{equation}
\overline{t^{\frac12}} = t^{-\frac12}
\qquad\hbox{and}\qquad
\overline{\vert \lambda \rangle} = (-1)^{\ell(w_0)}(t^{-\frac12})^{\ell(w_{0})-N_\lambda}\, \vert w_0\circ \lambda \rangle.
\label{Fellbar}
\end{equation}
where $w_0$ is the longest element of $W_0$, $\ell(w_0) = \Card(R^+)$ is the length of $w_0$, and 
$N_\lambda = \Card\{\alpha\in R^+\ |\ \langle \lambda+\rho, \alpha^\vee\rangle \in \ell\ZZ\}.$
%$w_\lambda$ is the longest element of
%the stabilizer $W_\lambda = \mathrm{Stab}_{W_0}(\lambda)$ of $\lambda$ under the dot action.

\subsection{$\cF_\ell$ is a KL-module}

The \emph{dominant integral weights}  with the \emph{dominance partial order} $\le$ are the elements of 
\begin{equation}
\begin{array}{c}
(\fa_\ZZ^*)^+ 
%&= \{\lambda\in \fa_\ZZ^*\ |\ \hbox{$\langle \lambda, \alpha_i^\vee\rangle\ge 0$ for $i=1, 2,\ldots, n$} \} 
= \{\lambda\in \fa_\ZZ^*\ |\ \hbox{$\langle \lambda+\rho, \alpha_i^\vee\rangle> 0$ for $i=1, 2,\ldots, n$} \}
\\
\\
\hbox{with}\qquad
\mu\le \lambda\qquad\hbox{if}\quad
\mu \in \lambda - \sum_{\alpha\in R^+} \ZZ_{\ge 0}\alpha.
\end{array}
\label{domintwtsdefn}
\end{equation}
In combination, Theorem \ref{Fock_is_KL} and Proposition \ref{Prop_KLBases} below 
give that $\cF_\ell$ has bases
\begin{equation}
\{ \vert\lambda\rangle\ |\ \lambda\in (\fa_\ZZ^*)^+\}
\qquad\hbox{and}\qquad
\{ C_\lambda\ |\ \lambda\in (\fa_\ZZ^*)^+\}
\label{Fellbases}
\end{equation}
where $C_\lambda$ are determined by 
\begin{equation}
\overline{C_\lambda} = C_\lambda
\qquad\hbox{and}\qquad
C_\lambda = \vert \lambda\rangle + \sum_{\mu\ne \lambda} p_{\mu\lambda} \vert\mu\rangle,
\qquad\hbox{with $p_{\mu\lambda}\in t^{\frac12}\ZZ[t^{\frac12}]$.}
\end{equation}

\begin{thm}\label{Fock_is_KL}  Let $\cF_\ell$ be defined as \eqref{Fstraightening} and let 
$\cL = \{ \vert\lambda\rangle\ |\ \lambda\in (\fa_\ZZ^*)^+\}$.  Then, with the definition of KL-module as 
in Section 2, $\cL$ is a basis of $\cF_\ell$ and
$$((\fa_\ZZ^*)^+, \cF_\ell, \cL, 
\overline{\phantom{T}}\colon \mathcal{F}_{\ell}\to \mathcal{F}_{\ell})
\quad\hbox{is a KL-module.}$$
\end{thm}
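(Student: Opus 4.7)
The theorem has two components: (a) that $\cL=\{|\lambda\rangle : \lambda\in(\fa_\ZZ^*)^+\}$ is a $\ZZ[t^{\pm 1/2}]$-basis of $\cF_\ell$, and (b) that the bar involution \eqref{Fellbar} is upper unitriangular on $\cL$ with respect to the dominance order \eqref{domintwtsdefn}; together these comprise the KL-module axioms of Section~2. My strategy is to view \eqref{Fstraightening} as a terminating rewriting algorithm that expresses any $|\nu\rangle$ as a $\ZZ[t^{\pm 1/2}]$-combination of $|\lambda\rangle$ with $\lambda\in(\fa_\ZZ^*)^+$, to deduce linear independence by mapping to a free model, and finally to apply the same algorithm to $\overline{|\lambda\rangle}$ to verify unitriangularity.

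For the rewriting, given $\nu\in\fa_\ZZ^*$, let $\mu\in\overline{(\fa_\ZZ^*)^+}$ be the unique dot-$W_0$-translate of $\nu$ in the closed dominant chamber and let $w\in W_0$ be the minimal-length element with $\nu=w\circ\mu$. I induct on a well-founded statistic combining $\ell(w)$ with a secondary measure of how many $\ell$-hyperplanes $\langle\cdot+\rho,\alpha_i^\vee\rangle=k\ell$ separate $\nu$ from a fundamental alcove. If $\ell(w)>0$, pick $s_i$ with $\ell(s_iw)<\ell(w)$; then $w^{-1}\alpha_i\in -R^+$, so $\kappa:=s_i\circ\nu=s_iw\circ\mu$ satisfies $\langle\kappa+\rho,\alpha_i^\vee\rangle\ge 0$, and \eqref{Fstraightening}, with $\kappa$ in the role of $\lambda$, rewrites $|\nu\rangle=|s_i\circ\kappa\rangle$ in terms of $|\kappa\rangle$ and, in case~3, the extras $|\kappa^{(1)}\rangle$ and $|s_i\circ\kappa^{(1)}\rangle$. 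Each right-hand term has strictly smaller statistic: $|\kappa\rangle$ because $\ell(s_iw)<\ell(w)$, and the case~3 extras because their $\alpha_i^\vee$-pairings $\pm(k\ell-j)$ are smaller in magnitude than the $k\ell+j$ of $\kappa$. Weights $\lambda'$ on a Weyl wall $\langle\lambda'+\rho,\alpha_i^\vee\rangle=0$ satisfy $2|\lambda'\rangle=0$ by case~1 and may be discarded from expansions in $\cL$.

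For linear independence I construct a free $\ZZ[t^{\pm 1/2}]$-module $M=\bigoplus_{\lambda\in(\fa_\ZZ^*)^+}\ZZ[t^{\pm 1/2}]\,v_\lambda$, extend $\nu\mapsto v_\nu$ to all $\nu\in\fa_\ZZ^*$ by running the algorithm inside $M$ (with $v_{\lambda'}:=0$ for $\lambda'$ on a Weyl wall), and verify that the three cases of \eqref{Fstraightening} are satisfied identically in $M$. This produces a $\ZZ[t^{\pm 1/2}]$-linear surjection $\cF_\ell\twoheadrightarrow M$ bijective on $\cL$, forcing $\cL$ to be a basis. The main obstacle is confluence: the algorithm's output must be independent of the descent choices made at each stage. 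I expect this is most cleanly established via the identification, carried out in Section~4, of $\cF_\ell$ with a parabolic induction from the affine Hecke algebra, whose standard basis directly supplies $M$. Finally, for unitriangularity, apply the algorithm to $|w_0\circ\lambda\rangle$ along a reduced expression for $w_0$: of the $\ell(w_0)$ straightening steps, the $N_\lambda$ for which the relevant pairing lies in $\ell\ZZ_{>0}$ fall under case~1 (contributing factor $-1$), while the remaining $\ell(w_0)-N_\lambda$ fall under case~2 or~3 (contributing leading factor $-t^{1/2}$). The leading coefficient of $|\lambda\rangle$ in the expansion of $|w_0\circ\lambda\rangle$ is therefore $(-1)^{\ell(w_0)}(t^{1/2})^{\ell(w_0)-N_\lambda}$, which exactly cancels the prefactor in \eqref{Fellbar} so that $|\lambda\rangle$ occurs in $\overline{|\lambda\rangle}$ with coefficient $1$. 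The remaining lower-order contributions come from the case~3 extras $|\lambda^{(1)}\rangle$ and $|s_i\circ\lambda^{(1)}\rangle$; showing that all their dominant representatives lie strictly below $\lambda$ in the dominance order is the last point to verify, and is the other main hurdle.
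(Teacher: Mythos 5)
Your proposal is correct and follows essentially the same route as the paper's own (deliberately incomplete) sketch: iterate the straightening relations of \eqref{Fstraightening} to express any $|\nu\rangle$ in terms of dominant $|\lambda^+\rangle$, read off the leading coefficient $(-1)^{\ell(w_0)}(t^{1/2})^{\ell(w_0)-N_\lambda}$ that makes the bar involution unitriangular, and then concede that the real content --- linear independence of $\cL$, which requires confluence of the rewriting / well-definedness of $|w\circ\lambda^+\rangle$ across reduced expressions for $w$ --- is most cleanly obtained from the identification $\cF_\ell\cong\cP^+_{-\ell-h}$ of Theorem~\ref{abstracttoHecke}. The paper reaches the same prefactor via the closed-form \eqref{lemma_straightening3} rather than your descent-by-descent count, but the argument, including the deferral of the hard steps to Section~4, is the same.
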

\begin{proof} (Sketch)
If $\lambda \in (\fa_{\ZZ}^{*})^{+}$ then there are only finitely many $\mu \le \lambda$ with the property that $\mu$ 
is also dominant (see \cite[Cor. 1.4]{St}).

Let $i\in \{1, \ldots, n\}$ and let $\lambda\in \fa_\ZZ^*$
be such that
$0<\langle \lambda+\rho, \alpha_i^\vee\rangle$.  Write
$$\langle \lambda+\rho, \alpha_i^\vee\rangle  = k\ell + j,
\qquad\hbox{with $k\in \ZZ$ and $j\in \{0,1,\ldots, \ell-1\}$.}
$$
When $j\ne 0$ define
$$\lambda^{(1)} = \lambda - j\alpha_i
\quad\hbox{and}\quad
\lambda^{(j+1)} = (\lambda^{(j)})^{(1)}.$$
Then induction on $k$ using the third case in \eqref{Fstraightening} gives
\begin{align}
\vert s_i\circ \lambda \rangle 
&=(-t^{\frac12}) \vert \lambda \rangle + (-t^{\frac12}) t^{-\frac12} \vert \lambda^{(1)}\rangle + (-t^{\frac12}) \vert s_i\circ \lambda^{(1)}\rangle \nonumber \\
& = (-t^{\frac12})\vert \lambda \rangle + (-t^{\frac12})t^{-\frac12} \vert \lambda^{(1)} \rangle \nonumber \\
&\qquad +(-t^{\frac12})(-t^{\frac12})\left(
\begin{array}{l}
\vert \lambda^{(1)} \rangle
- (t^{\frac12}-t^{-\frac12})\vert \lambda^{(2)}\rangle
- (t^{\frac12}-t^{-\frac12})(-t^{\frac12})\vert \lambda^{(3)}\rangle \\
\qquad - \cdots - (t^{\frac12}-t^{-\frac12}) (-t^{\frac12})^{k-2} \vert\lambda^{(k)}\rangle
\end{array}
\right) \nonumber \\
&=(-t^{\frac12})\left(
\begin{array}{l}
\vert \lambda \rangle
- (t^{\frac12}-t^{-\frac12})\vert \lambda^{(1)}\rangle
- (t^{\frac12}-t^{-\frac12})(-t^{\frac12})\vert \lambda^{(2)}\rangle \\
\qquad - \cdots - (t^{\frac12}-t^{-\frac12})(-t^{\frac12})^{k-1}\vert\lambda^{(k)}\rangle
\end{array}\right).
\label{rank1straightened}
\end{align}
More generally, for $\lambda \in \fa_\ZZ^*$ such that $\langle \lambda+\rho, \alpha_i^\vee\rangle \ne 0$ for $i\in \{1,\ldots, n\}$
let $\lambda^+$ be the dominant representative of $W_0\circ \lambda$ and let 
\begin{equation}
\begin{array}{rl}
R(\lambda) &= \{ \alpha\in R^+\ |\ \langle \lambda+\rho, \alpha^\vee\rangle \in \ZZ_{<0}\}, %\qquad\hbox{and} 
\\
R_\ell(\lambda) &= \{ \alpha\in R^+\ |\ \langle \lambda+\rho, \alpha^\vee\rangle \in \ell\ZZ_{<0}\}.
\end{array}
\end{equation}
Then iterating \eqref{rank1straightened} produces $c_\mu\in (t^{-\frac12}-t^{\frac12})\ZZ[t^{\frac12}]$ so that 
\begin{equation}
\vert \lambda\rangle
= (-1)^{\Card(R(\lambda))}(t^{\frac12})^{\Card(R(\lambda))-\Card(R_\ell(\lambda))}\left(\vert\lambda^+\rangle
+ \sum_{\mu^+\in (\fa_\ZZ^*)^+\atop  \mu^+ \le \lambda^+} c_{\mu} \vert \mu^+ \rangle\right).
\label{lemma_straightening3}
\end{equation}

\noindent
With \eqref{lemma_straightening3} in hand all steps in a direct proof of Theorem \ref{Fock_is_KL}
are straightforward except proving that $\{ \vert\lambda^+\rangle\ |\ \lambda^+\in (\fa_\ZZ^*)^+\}$ is a basis of $\cF_\ell$
(the linear independence is the issue).  To prove this directly the unpleasant step is to show that
if $\lambda^+\in (\fa_\ZZ^*)^+$ and $w\in W_0$ then $\vert w\circ\lambda^+\rangle$ defined
by $\vert w\circ \lambda^+\rangle = \vert s_{i_1}\circ(s_{i_2}\circ \cdots \circ (s_{i_k}\circ \lambda^+))\rangle$ for a reduced decomposition
$w=s_{i_1}s_{i_2}\cdots s_{i_k}$ will produce a well defined element of $\cF_\ell$ (independent of the choice of reduced 
decomposition).  Alternatively, it is possible to use a Gr\"obner basis argument using the ordering $\preceq$ on $\fa_\ZZ^*$ given by
$$
\begin{array}{cl}
\mu\prec \lambda\quad &\hbox{if $\mu^+< \lambda^+$ in dominance order and} \\
u\circ \lambda^+ \prec v\circ\lambda^+
&\hbox{if $u<v$ in Bruhat order,}
\end{array}
$$
where $\mu^+$ denotes the dominant representative of $W_0\circ \mu$.
However, we will not complete this sketch here as Theorem \ref{Fock_is_KL} is a consequence of
the realization of $\cF_\ell$ provided by Theorem \ref{abstracttoHecke}.
\end{proof}

\subsection{$\cF_\ell$ as a semi-infinite wedge space for the case $G=GL_\infty$}

Fix $\ell\in \ZZ_{>0}$.  
The semi-infinite wedge space considered by Kashiwara-Miwa-Stern \cite[(43)-(45)]{KMS} is
\begin{equation}
\cF_\ell = \Lambda^{\frac{\infty}{2}}V 
= \hbox{$\CC$-span}\left\{ v_{a_1}\wedge v_{a_2}\wedge\cdots\ \Bigg|\ \begin{matrix}
\hbox{$a_j\in \ZZ$ and, for all but} \\
\hbox{a finite number of $j$, $a_j = -j+1$}
\end{matrix}\right\},
\label{semiinfwedgespace}
\end{equation}
where $v_a$, $a\in \ZZ$ are symbols, and if $a<b$ then
$$v_b\wedge v_a = \begin{cases}
-(v_a\wedge v_b), &\hbox{if $a-b\in \ell\ZZ_{\ge 0}$,} \\
-t^{\frac12}(v_a\wedge v_b), &\hbox{if $0< a-b<\ell$,} \\
-t^{\frac12}(v_{b+j}\wedge v_{a-j})-(v_{a-j}\wedge v_{b+j})-t^{\frac12}(v_a\wedge v_b), &
\begin{matrix}\hbox{if $a-b=k\ell+j$ with $k\in \ZZ$} \\
\hbox{and $j\in \{0,1,\ldots, \ell-1\}$.}
\end{matrix}
\end{cases}
$$
From the point of view of \eqref{wtsWeylgpdefn} and \eqref{Fstraightening}, this is the case $G= GL_\infty(\CC)$ with
$\fa_\ZZ^* = \hbox{$\ZZ$-span}\{ \varepsilon_1, \varepsilon_2, \ldots\}$
and
$W_0$ the infinite symmetric group generated by $s_1, s_2, s_3, \ldots$, where
$s_i$ is the simple transposition that switches $\varepsilon_i$ and $\varepsilon_{i+1}$.
This framework illustrates that the straightening laws of \eqref{Fstraightening} are generalizations
of those that appear in \cite[(43-45)]{KMS} and \cite[Prop.\ 5.11]{LT}.

In the semi-infinite wedge space setting of \eqref{semiinfwedgespace} 
the bar involution appears in \cite[\S 3.6]{Le}, and \cite[Prop.\ 5.9 and (85)]{LT}. 
Kashiwara-Miwa-Stern \cite{KMS} already have the affine Hecke algebra playing a significant 
role in their story; in retrospect, this is not unrelated to the role that the affine Hecke algebra 
takes for us in Theorem \ref{abstracttoHecke}.   Leclerc-Thibon \cite{LT} also
have the affine Hecke algebra playing an important role, essentially the same as in this paper.

The correspondence between partitions, semi-infinite wedges and Maya diagrams appears in \cite[\S4.3 and Fig.\ 9.3]{MJD} (see also
\cite[\S2.2.1]{Le} and \cite[Fig.\ 1]{Tin}).
Following  \cite[\S2.2.1]{Le}, the partition 
$$
\lambda =(\lambda_1\ge \lambda_2\ge \cdots\ge \lambda_s>0)=(\lambda_1, \lambda_2, \ldots, \lambda_s, 0,0,\ldots)
\qquad\hbox{corresponds to}$$
$$
\hbox{the semi-infinite wedge} \qquad
|\lambda\rangle =v_{\lambda_1-1+1}\wedge v_{\lambda_2-2+1}\wedge \cdots\,.
$$ 
The $\rho$-shift which appears in \eqref{Fstraightening} also appears here since
$\rho$ can be taken to be
$$\rho=(0, 1, 2, 3, \ldots)
\qquad\hbox{for the case of $G=GL_\infty(\CC)$.}
$$
In the picture below, when following the bold boundary of the partition 
$\lambda=(4,4,3,3,2,2,1,1,1)$ the positive slope edges 
correspond to black dots in the Maya diagram and the black dots in the Maya diagram correspond to the indices in the corresponding wedge 
$\vert \lambda\rangle = v_{i_1}\wedge v_{i_2}\wedge \cdots$.

$$\includegraphics[height=8.6cm]{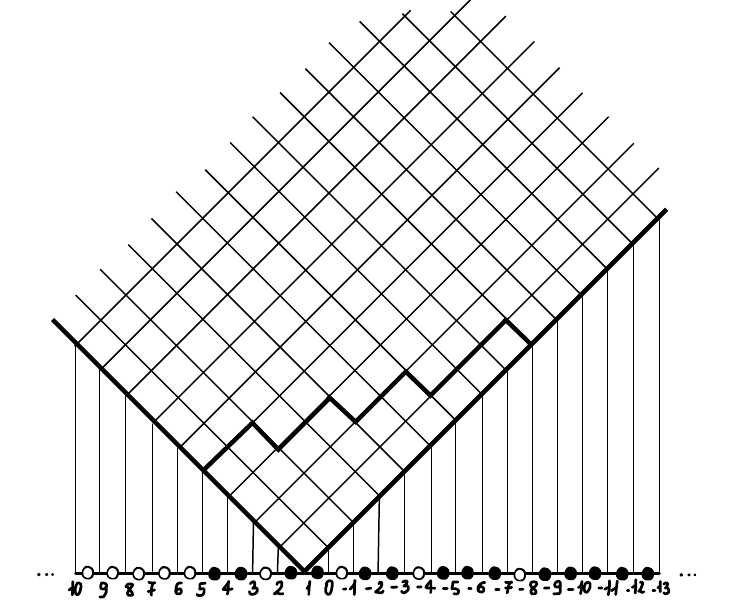}$$

$$\lambda = (4,4,3,3,2,2,1,1,1)
\quad\hbox{with}\quad
\vert\lambda\rangle = v_4\wedge v_3\wedge v_1\wedge v_0\wedge v_{-2}\wedge v_{-3}
\wedge v_{-5}\wedge v_{-6}\wedge v_{-7}\wedge v_{-9}\wedge v_{-10}\wedge \cdots.
$$

\section{KL-modules and bases}

The \emph{bar involution} on the ring $\ZZ[t^{\frac12}, t^{-\frac12}]$ of Laurent polynomials in $t^{\frac12}$ is 
the ring isomorphism 
\begin{equation}
\overline{\phantom{T}}: \ZZ[t^{\frac12}, t^{-\frac12}] \to \ZZ[t^{\frac12}, t^{-\frac12}]
\qquad\hbox{given by}\qquad
\overline{t^{\frac12}} = t^{-\frac12}.
\label{babybar}
\end{equation}
A \emph{KL-module} over $\ZZ[t^{\frac12}, t^{-\frac12}]$ is a tuple 
$(\Lambda,M,\{T_w\}_{w\in \Lambda}, \overline{\phantom{T}}\colon M\to M)$ where
\begin{enumerate}
\item[(a)] $\Lambda$ is a partially ordered set such that if $w \in \Lambda$ then
$\{ v\in \Lambda\ |\  v \le w \}$ is finite,
\item[(b)] $M$ is a free $\ZZ[t^{\frac12}, t^{-\frac12}]$-module with basis $\{T_w\ |\  w \in \Lambda\}$,
\item[(c)] $\overline{\phantom{T}}\colon M\to M$ is a $\ZZ$-module homomorphism such that
if $m\in M$, $a\in \ZZ[t^{\frac12}, t^{-\frac12}]$ and $w\in \Lambda$ then
\begin{equation}
%\overline{q} = q^{-1},\quad
\overline{a\cdot m} = \overline{a}\cdot \overline{m}, \qquad
\overline{\overline{m}} = m,
\qquad\hbox{and}\qquad
\overline{T_w} = T_w + \sum_{v<w} a_{vw} T_v,
\label{atriang}
\end{equation}
where $\overline{a}$ is given by \eqref{babybar} 
and the coefficients $a_{v,w}$ in the expansion of $\overline{T_w}$ are elements of $\ZZ[t^{\frac12}, t^{-\frac12}]$.
\end{enumerate}

\begin{prop}\label{Prop_KLBases}
Let $(\Lambda,M,\{T_w\}, \overline{\, \cdot \,})$ be a KL-module over $\ZZ[t^{\frac12}, t^{-\frac12}]$.  There is a 
unique basis $\{C_w\ |\ w\in \Lambda\}$ of $M$ characterized by
\begin{equation}
\overline{C_w} = C_w
\quad\hbox{and}\quad
C_w = T_w + \sum_{v<w} p_{vw} T_v ,\quad
\hbox{with $p_{vw}\in t^{\frac12}\ZZ[t^{\frac12}]$ for $v<w$.}
\label{Cmprops} 
\end{equation}
Let  $d_{vw}$ be the coefficients in the expansion
\begin{equation}\label{PQdefn}
T_w = C_w + \sum_{v<w} d_{vw} C_v,\quad\hbox{with $d_{vw}\in t^{\frac12}\ZZ[t^{\frac12}]$ for $v<w$.}
\end{equation}
The polynomials $p_{uw}$ and $d_{uw}=0$ 
are specified, inductively, by the equations $p_{uw}=d_{uw}=0$ unless $u\le w$, $p_{ww}=d_{ww}=1$,
\begin{equation}\label{PQrecursion}
p_{uw}-\overline{p_{uw}}
= \sum_{u<z\leqslant w} a_{uz}\overline{p_{zw}}
\qquad\hbox{and}\qquad
d_{uw}-\overline{d_{uw}}=-\sum_{u\leq z<w} d_{uz}a_{z,w}.
\end{equation}
\end{prop}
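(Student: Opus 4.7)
The plan is to prove existence and uniqueness of the basis $\{C_w\}$ by induction on the order ideal $\Lambda_{\leq w} = \{v \in \Lambda \mid v \leq w\}$, which is finite by axiom (a). The base case is $w$ minimal, where $C_w = T_w$ trivially satisfies both required conditions. For the inductive step, I would fix $w$, assume $C_v$ is constructed for all $v < w$, and attempt $C_w = T_w + \sum_{v<w} p_{vw} T_v$. Expanding $\overline{C_w}$ using \eqref{atriang} and comparing the coefficient of $T_u$ on both sides of $\overline{C_w}=C_w$ yields, for each $u < w$,
\begin{equation*}
p_{uw} - \overline{p_{uw}} = a_{uw} + \sum_{u < v < w} a_{uv}\,\overline{p_{vw}} = \sum_{u < z \leq w} a_{uz}\,\overline{p_{zw}},
\end{equation*}
which is precisely the stated recursion (using $p_{ww}=1$).

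The heart of the argument is a short bar-division lemma: if $r \in \ZZ[t^{1/2},t^{-1/2}]$ satisfies $r = -\overline{r}$, then there is a unique $p \in t^{1/2}\ZZ[t^{1/2}]$ with $p - \overline{p} = r$ (explicitly, writing $r = \sum_{n\geq 1} r_n(t^{n/2}-t^{-n/2})$ forces $p = \sum_{n\geq 1} r_n t^{n/2}$). Applying this lemma inductively from the maximal $u < w$ downward through the interval $[v_0,w]$ determines each $p_{uw}$ uniquely, provided the right-hand side is bar-antisymmetric. The verification of bar-antisymmetry is where the axiom $\overline{\overline{T_w}} = T_w$ is used: expanding $T_w = \overline{\overline{T_w}}$ and comparing coefficients produces the identities $a_{uw} + \overline{a_{uw}} + \sum_{u<v<w} \overline{a_{vw}}a_{uv} = 0$ among the $a$-coefficients, and combining these with the inductively established bar relations on $p_{zw}$ (for $z > u$) shows $\sum_{u < z \leq w} a_{uz}\overline{p_{zw}}$ is bar-antisymmetric, so the lemma applies.

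Uniqueness of $\{C_w\}$ then follows formally: if $C_w'$ is another candidate, then $C_w - C_w'$ is bar-invariant, lies in $\sum_{v<w} t^{1/2}\ZZ[t^{1/2}]\,T_v$, and a bar-invariant element whose $T_v$-coefficients all lie in $t^{1/2}\ZZ[t^{1/2}]$ must vanish (as $t^{1/2}\ZZ[t^{1/2}] \cap \ZZ[t^{-1/2}] = 0$, argued by downward induction on $v$ in the expansion). The recursion for the $d_{uw}$ is obtained by the parallel computation on the other side: since $\overline{C_v} = C_v$, applying the bar involution to $T_w = C_w + \sum_{v<w} d_{vw}C_v$ and using $\overline{T_w} = T_w + \sum a_{vw}T_v$, then re-expanding the $T_v$ in the $C$-basis and matching the coefficient of $C_u$, yields $\overline{d_{uw}} - d_{uw} = a_{uw} + \sum_{u<z<w} d_{uz}a_{zw} = \sum_{u \leq z < w} d_{uz}a_{zw}$, i.e.\ the second equation of \eqref{PQrecursion}; the same bar-division lemma gives uniqueness and $d_{uw} \in t^{1/2}\ZZ[t^{1/2}]$.

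The main obstacle will be the verification that the right-hand side of the recursion for $p_{uw}$ is bar-antisymmetric at each inductive stage. This is a purely algebraic bookkeeping step, but one has to order the induction carefully: the induction is on the rank of $u$ in the interval $[v_0, w]$ (from high to low), and one must simultaneously use both the inductively known bar relation $\overline{p_{zw}} = p_{zw} - \sum_{z < y \leq w} a_{zy}\overline{p_{yw}}$ for $z > u$ and the double-bar identity on the $a$-coefficients. Once this is in place, the rest of the proof is routine manipulation.
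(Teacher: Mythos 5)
Your proposal is correct and is essentially the same argument as the paper's: both derive the recursion $p_{uw}-\overline{p_{uw}}=\sum_{u<z\le w}a_{uz}\overline{p_{zw}}$ from $\overline{\overline{T_w}}=T_w$ (in the paper's notation, $A\overline{A}=1$ and $P=A\overline{P}$), observe that the right-hand side is bar-antisymmetric, and extract the unique solution in $t^{1/2}\ZZ[t^{1/2}]$. The one point on which you are more explicit than the paper is the verification of bar-antisymmetry of $\sum_{u<z\le w}a_{uz}\overline{p_{zw}}$ at each inductive stage (the paper obtains $\overline{f}=-f$ by writing $f=p_{uw}-\overline{p_{uw}}$, which is cleanest once existence is granted, while you check it directly from the component identity $a_{uy}+\overline{a_{uy}}+\sum_{u<z<y}\overline{a_{uz}}a_{zy}=0$ combined with the already-established relations for $p_{zw}$, $z>u$); this is a genuine, if routine, detail worth having written down, and your sketch of it is correct.
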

\begin{proof}
The matrices $A = (a_{vw})$,  $P=(p_{vw})$ and $D=(d_{vw})$ defined by
\eqref{atriang} and \eqref{PQdefn} are all upper triangular with 1's on the diagonal.
Then
\begin{equation}
A\overline{A}=1,\quad P=A\overline{P}, \quad \overline{D}=DA\quad \text{and}\quad DP=1=PD,
\label{KLmechanics}
\end{equation}
since
\begin{align*}
T_w &= \overline{\overline{T_w}} 
= \sum_v \overline{a_{vw}T_v}
=\sum_{u,v} a_{uv}\overline{a_{vw}}T_u, \\
\sum_u p_{uw}T_v 
&= C_w
=\overline{C_w}
= \sum_v \overline{p_{vw}T_v}
=\sum_{u,v} \overline{p_{vw}}a_{uv}T_u, \quad\hbox{and} \\
C_w+\sum_{v<w}\overline{d_{vw}}C_v
&=\overline{T_w}
=\sum_{u\leq w}a_{u,w} T_u
=\sum_{v\leq u \leq w}a_{u,w} d_{vu} C_v.
\end{align*}
Letting $\displaystyle{
f = p_{uw}-\overline{p_{uw}}
%=\sum_{u<z\leqslant w} a_{uz}\overline{p_{zw}}
= \sum_{k\in \ZZ} f_k (t^{\frac12})^k,
}$
\begin{equation}\label{eqn_calculatePuw}
f = p_{uw}-\overline{p_{uw}}
=(P-\overline{P})_{uw} = 
((A-1)\overline{P})_{uw} = (A\overline P-\overline{P})_{uw}
= \sum_{u<z\leqslant w} a_{uz}\overline{p_{zw}},
\end{equation}
and the identity
$$
\overline{f} 
= \overline{(p_{uw}-\overline{p_{uw}})}
= \overline{p_{uw}} -p_{uw} = - f
\qquad\hbox{implies}\qquad
f_k = -f_{-k}, \ \ \hbox{for $k\in \ZZ$.}
$$
Thus $p_{uw} =\displaystyle{\sum_{k\in \ZZ_{<0}} f_k (t^{\frac12})^k }$.  The derivation of the 
formula for the entries of $D$ is similar using $D-\overline{D}=D-DA$ and $a_{ww}=1$.
\end{proof}

 \subsection{KL modules associated to Hecke algebras of Coxeter groups}

Let $W$ be a Coxeter group generated by $s_0, s_1, \ldots, s_n$ so that 
\begin{equation}
s_i^2=1,
\qquad\hbox{and}\qquad
(s_is_j)^{m_{ij}}=1,
\quad\hbox{for $i\ne j$}
\end{equation}
($m_{ij}$ is allowed to be $\infty$, in which case, the expression $(s_is_j)^{m_{ij}}=1$ should be interpreted as
``$s_is_j$ has infinite order'').
Let $w\in W$. A \emph{reduced word for} $w$ is a sequence $s_{i_1}\cdots s_{i_r}$ of generators
with $w=s_{i_1}\cdots s_{i_r}$ and $r$ minimal. The \emph{length} of $w$ is 
$\ell(w)=r$ if $s_{i_1}\ldots s_{i_r}$ is a reduced word  for $w$.   The \emph{Bruhat order} $\leq$ on $W$ 
is given by $v\leq w$ if there is a reduced word $s_{j_1}\ldots s_{j_m}$ for $v$ which is a subword of a reduced word $s_{i_1}\ldots s_{i_r}$ for $w$.

The \emph{Hecke algebra of $W$} is the $\bbZ[t^{\frac12}, t^{-\frac12}]$-algebra $H$
with generators $T_0, T_1, \ldots, T_n$  and relations
\begin{equation}
T_{i}^2=(t^{\frac{1}{2}}-t^{-\frac{1}{2}})T_i+1
\qquad\hbox{and}\qquad
\underbrace{T_{i}T_jT_i\cdots }_{m_{ij}\ \mathrm{factors}}
=\underbrace{T_{j}T_iT_j\cdots}_{m_{ij}\ \mathrm{factors}}.
\label{eqnT2}
\end{equation}
For $w\in W$ define $T_w=T_{s_{i_1}}\ldots T_{s_{i_r}}$ for a reduced word 
$w=s_{i_1}\cdots s_{i_r}$. By \cite[Ch. 4, \S 2, Ex. 23)]{Bou},
$T_w$ does not depend on the choice of reduced word for $w$ and 
\begin{equation}
\{ T_w\ |\ w\in W\}
\quad\hbox{is a $\bbZ[t^{\frac12}, t^{-\frac12}]$-basis of $H$.}
\label{Hstandardbasis}
\end{equation}
Define a $\ZZ$-algebra automorphism $\overline{\phantom{T}}\colon H\to H$ by 
\begin{equation}
\overline{t^{\frac12}} = t^{-\frac12}
\qquad\hbox{and}\qquad
\overline{T_w} = T_{w^{-1}}^{-1} \quad\hbox{for $w\in W$.}
\label{Hbarinvolution}
\end{equation}
By the first relation in (\ref{eqnT2}), 
$T_i^{-1}=T_{i_1}-(t^{\frac{1}{2}}-t^{-\frac{1}{2}})$, so that
if $w=s_{i_1}\cdots s_{i_r}$ is a reduced word for $w\in W$ then 
\begin{align*}
\overline{T_{w}}
&=\overline{T_{i_1}\cdots T_{i_r}}
=T_{i_1}^{-1}\cdots T_{i_r}^{-1}
=\left(T_{i_1}-(t^{\frac{1}{2}}-t^{-\frac{1}{2}}) \right)\cdots \left(T_{i_r}-(t^{\frac{1}{2}}-t^{-\frac{1}{2}}) \right)\\
&=T_{w}+\sum_{v<w} a_{vw} T_v, 
\qquad\hbox{with $a_{vw}\in (t^{\frac12}-t^{-\frac12})\ZZ[t^{\frac12}-t^{-\frac12}]$.}
\end{align*}
With standard basis as in \eqref{Hstandardbasis} indexed by the poset $W$ and with bar involution as in \eqref{Hbarinvolution},
$$\hbox{$H$ is a KL-module over
$\bbZ[t^{\frac12},t^{-\frac12}]$}
$$
and, from Proposition  \ref{Prop_KLBases}, there is
a unique basis  $\{C_w\ |\ w\in W\}$ determined by
\begin{equation}
\overline{C_x} = C_x
\qquad\hbox{and}\qquad
C_x = \sum_{y\le x\atop y\in W} (-1)^{\ell(x)-\ell(y)}P_{y,x}(t^{\frac12}) T_y,
\label{LusztigC}
\end{equation}
with $P_{y,x}(t^\frac12)\in t^{\frac{1}{2}}\ZZ[t^{\frac{1}{2}}]$ for $y<x$.
The polynomials $P_{y,x}$ are the \emph{Kazhdan-Lusztig polynomials} for $H$.

\subsection{Singular and parabolic KL polynomials}\label{HeckeKLsection}

\subsubsection{The projectors}

Let $J, \gamma\subseteq \{0,1,\ldots n\}$ and let $W_\nu$ and 
$W_\gamma$ be the subgroups of $W$ generated by the corresponding 
simple reflections,
\begin{equation}
W_\nu = \langle s_j\ |\ j\in J\rangle 
\qquad\hbox{and}\qquad
W_\gamma = \langle s_k\ |\ k\not\in \gamma\rangle,
\qquad\hbox{respectively.}
\label{stabsandsubsets}
\end{equation}
Assume that $W_\nu$ and $W_\gamma$ are both finite.
Let $w_\nu$ be the longest element of $W_\nu$ and let $w_\gamma$ be the longest element of $W_\gamma$ and let
\begin{equation}
W_\nu(t) = \sum_{z\in W_\nu} t^{\ell(z)}
\qquad\hbox{and}\qquad W_\gamma(t) = \sum_{z\in W_\gamma} t^{\ell(z)}.
\label{Poincarepoly}
\end{equation}
Then
\begin{align}
\mathbf{1}_\nu &= \sum_{z\in W_\nu} (t^{-\frac12})^{\ell(w_\nu)-\ell(z)} T_z 
= (t^{-\frac12})^{\ell(w_\nu)}\sum_{z\in W_\nu} (t^{\frac12})^{\ell(z)} T_z,
\qquad\hbox{and} \nonumber \\
\varepsilon_\gamma &= \sum_{z\in W_\gamma} (-t^{\frac12})^{\ell(w_\gamma)-\ell(z)} T_z
= (-t^{\frac12})^{\ell(w_\gamma)}\sum_{z\in W_\gamma} (-t^{-\frac12})^{\ell(z)} T_z,
\label{projectors}
\end{align}
satisfy
$$\begin{array}{lclcl}
\overline{\mathbf{1}_\nu} = \mathbf{1}_\nu,
&\quad 
&T_{s_j}\mathbf{1}_\nu = t^{\frac12}\mathbf{1}_\nu \ \hbox{for $j\in J$,}
&\quad\hbox{and}\quad 
&\mathbf{1}_\nu^2 = (t^{-\frac12})^{\ell(w_\nu)} W_\nu(t) \mathbf{1}_\nu, 
\\
\overline{\varepsilon_\gamma} = \varepsilon_\gamma,
&\qquad 
&\varepsilon_\gamma T_{s_k} = -t^{-\frac12}\varepsilon_\gamma
\ \hbox{for $k\not\in \gamma$,}
&\hbox{and} 
&\varepsilon_\gamma^2 = (-t^{-\frac12})^{\ell(w_\gamma)}W_\gamma(t) \varepsilon_\gamma,
\end{array}
$$
and
$$\mathbf{1}_\nu = T_{w_\nu} + \sum_{x<w_\nu} h^-_{x,w_\nu}T_x
\qquad\hbox{and}\qquad
\varepsilon_\gamma = T_{w_\gamma} + \sum_{x<w_\gamma} h_{x,w_\gamma}T_x,
$$
with coefficients $h_{x,w_\nu}^-\in t^{-\frac12}\ZZ[t^{-\frac12}]$ and 
$h_{x,w_\gamma}\in t^{\frac12}\ZZ[t^{\frac12}]$.

\subsubsection{Singular block KL polynomials}

As in \eqref{stabsandsubsets}, let $W_\nu = \langle s_j\ |\ j\in J\rangle$ and let
$W^\nu$ be the set of minimal length coset representatives of the 
cosets in $W/W_\nu$.  The $\ZZ[t^{\frac12}, t^{-\frac12}]$-module
\begin{equation}
\hbox{$H\mathbf{1}_\nu$ has basis} \quad \{T_u\mathbf{1}_\nu \ |\ u\in W^\nu\}
\qquad\hbox{and}\qquad
\overline{\phantom{T}}\colon H\mathbf{1}_\nu \to H\mathbf{1}_\nu,
\label{singKLmodbasis}
\end{equation}
since
$\overline{\mathbf{1}_\nu} = \mathbf{1}_\nu$.
The Bruhat order $W^\nu$ is the restriction of the Bruhat order on $W$ to $W^\nu$ and,
with these structures, $H\mathbf{1}_\nu$ is a KL-module.

If $\varphi\colon H \to H\mathbf{1}_\nu$ is the surjective KL-module homorphism defined 
by right multiplication by $\mathbf{1}_\nu$ then
\begin{equation}
\hbox{$H\mathbf{1}_\nu$ has KL-basis} \quad \{ C_u\mathbf{1}_\nu\ |\ u\in W^\nu\},
\label{singularKLbasis}
\end{equation}
where $\{C_w\ |\ w\in W\}$ is the KL-basis of $H$.  
With notation as in \eqref{LusztigC},
\begin{equation}
C_x\mathbf{1}_\nu
= \sum_{y\le x\atop y\in W} (-1)^{\ell(x)-\ell(y)} P_{y,x}(t^\frac12) T_y\mathbf{1}_\nu,
\qquad\hbox{for $x\in W^\nu$,}
\label{LusztigS}
\end{equation}
where the sum can contain several $y\le x$ which have the same coset $yW_\nu$ (and this is how cancellation
can occur in the  sum \eqref{LusztigS}).
Since
$$T_x\mathbf{1}_\nu 
= (t^{\frac12})^{\ell(z)} T_{xz}\mathbf{1}_\nu,\ \hbox{for $z\in W_\nu$,}
$$
the coefficients $P^\nu_{y,x}$ in 
\begin{equation}C_x\mathbf{1}_\nu = \sum_{y\in W^\nu} (-1)^{\ell(x)-\ell(y)}
P_{y,x}^\nu T_y\mathbf{1}_\nu
\qquad\hbox{are}\qquad
P_{y,x}^\nu = \sum_{z\in W_\nu} (-1)^{\ell(y)-\ell(yz)} (t^{\frac12})^{\ell(z)} P_{yz,x}.
\label{singKLpolysdefn}
\end{equation}

Since $C_wT_{s_i}  = -t^{-\frac12} C_w$ unless $ws_i>w$ (see \cite[Prop.\ 7.14(a)]{Hu}), it follows that $C_w(T_{s_i}+t^{-\frac12})=0$ unless $ws_i>w$ so that
\begin{equation}\label{Cwvanishing}
C_w \mathbf{1}_\nu  = 0, \ \ \hbox{unless $w\in W^\nu$.}
\end{equation}
In summary, right multiplication by $\mathbf{1}_\nu$ is a surjective homomorphism of $\ZZ[t^{\frac12}, t^{-\frac12}]$-modules      
\begin{equation}
\begin{array}{ccll}
H &\longrightarrow &H\mathbf{1}_\nu  \\
T_w &\longmapsto &(t^{\frac12})^{\ell(v)}T_u\mathbf{1}_\nu ,  &\hbox{if $w=uv$ with $u\in W^\nu$ and $v\in W_\nu$, and } \\
C_w &\longmapsto &\begin{cases} C_w\mathbf{1}_\nu, &\hbox{if $w\in W^\nu$,} \\ 0, &\hbox{if $w\not\in W^\nu$.}
\end{cases} \\ \\
\end{array}
\label{singularmap}
\end{equation}

\subsubsection{Parabolic KL polynomials}\label{parabolicKLpolys}

As in \eqref{stabsandsubsets}, let $W_\gamma = \langle s_k\ |\ k\not\in \gamma\rangle$ and let
${}^\gamma W$ be the set of minimal length coset representatives of the 
cosets in $W_\gamma\backslash W$.  The $\ZZ[t^\frac12, t^{-\frac12}]$-module 
$$\hbox{$\varepsilon_\gamma H$ has basis }\quad
\{ \varepsilon_\gamma T_u\ |\ u\in {^\gamma W}\}
\qquad\hbox{and}\qquad
\overline{\phantom{T}}\colon \varepsilon_\gamma H \to \varepsilon_\gamma H
$$
since $\overline{\varepsilon_\gamma} = \varepsilon_\gamma$.
The Bruhat order ${}^\gamma W$ is the restriction of the Bruhat order on $W$ to ${}^\gamma W$ and,
with these structures, $\varepsilon_\gamma H$ is a KL-module.

Let $w_\gamma$ be the longest element of $W_\gamma$ and let $u\in {}^\gamma W$.
Since $T_{s_i}C_{w_\gamma u} = -t^{-\frac12}C_{w_\gamma u}$ for simple reflections $s_i\in W_\gamma$ (see \cite[Prop.\ 7.14(a)]{Hu}), it follows that
$C_{w_\gamma u}\in \varepsilon_\gamma H$.  Thus
\begin{equation}
\hbox{$\varepsilon_\gamma H$ has KL-basis }\quad
\{ C_{w_\gamma u}\ |\ u\in {}^\gamma W\},
\label{parabolicKLbasis}
\end{equation}
where $\{ C_w\ |\ w\in W\}$ is the KL-basis of $H$.  
In summary, there is an injective homomorphism of KL-modules
\begin{equation}
\begin{matrix}
\varepsilon_\gamma H &\longrightarrow &H \\
\varepsilon_\gamma T_u &\longmapsto & \varepsilon_\gamma T_u \\
C_{w_\gamma u} &\longmapsto &C_{w_\gamma u}
\end{matrix}
\label{parabolicmap}
\end{equation}
where $u\in {}^\gamma W$.

If $x\in {}^\gamma W$ then, from the second formula in \eqref{LusztigC},
\begin{equation}
C_{w_\gamma x} 
= \sum_{y\le w_\gamma x \atop y\in W} (-1)^{\ell(w_\gamma x)-\ell(y)}P_{y,w_\gamma x}(t^\frac12) T_y
=\sum_{w_\gamma y \le w_\gamma x\atop y\in {}^\gamma W}
(-1)^{\ell(w_\gamma x)-\ell(w_\gamma y)}P_{w_\gamma y,w_\gamma x}(t^\frac12) \varepsilon_\gamma T_y. 
\label{LusztigP}
\end{equation}
where, by the second formula in \eqref{projectors},
if $w\in W$ and $w=vu$ with $u\in {}^\gamma W$ and $v\in W_\gamma$ then
\begin{equation}
\varepsilon_\gamma T_w = \varepsilon_\gamma  T_v T_u = (-t^{-\frac12})^{\ell(v)}\varepsilon_\gamma T_u 
=(-t^{-\frac12})^{\ell(v)} \sum_{z\in W_\nu} (-t^{\frac12})^{\ell(w_\gamma)-\ell(z)} T_{zu}.
\label{parabolicexp}
\end{equation}

\subsubsection{Singular block parabolic KL polynomials}\label{singparabolicKLsection}

As in \eqref{stabsandsubsets},
$$\hbox{let}\quad W_\gamma = \langle s_k\ |\ k\not\in \gamma\rangle
\qquad\hbox{and let}\qquad W_\nu = \langle s_j\ |\ j\in J\rangle.
$$
Let $w_\gamma$ be the longest element of $W_\gamma$ and let
$\varepsilon_\gamma$ and $\mathbf{1}_\nu$ be as defined in \eqref{projectors}.
The composite of \eqref{singularmap} and \eqref{parabolicmap}
\begin{equation}
\begin{matrix}
\varepsilon_\gamma H &\longrightarrow &H &\longrightarrow &H\mathbf{1}_\nu \\
\varepsilon_\gamma T_u &\longmapsto & \varepsilon_\gamma T_u &\longmapsto &\varepsilon_\gamma T_u\mathbf{1}_\nu\\
C_{w_\gamma x} &\longmapsto &C_{w_\gamma x} &\longmapsto &C_{w_\gamma x}\mathbf{1}_\nu
\end{matrix}
\qquad\hbox{has image}\qquad
\varepsilon_\gamma H \mathbf{1}_\nu.
\label{singularparabolicmap}
\end{equation}
Let ${}^\gamma W$ be the set of minimal length coset representatives of the 
cosets in $W_\gamma\backslash W$, and let $W^\nu$ be the set of minimal length coset representatives of the 
cosets in $W/W_\nu$.   
From \eqref{Cwvanishing}, $C_w \mathbf{1}_\nu = 0$ unless $w\in W^\nu$, and so, in \eqref{singularparabolicmap},
$$\hbox{if $u\in {}^\gamma W$ then\quad $C_{w_\gamma u}\mathbf{1}_\nu = 0$ unless $w_\gamma u\in W^\nu$.}$$

By \cite[Ch.\ IV \S 1 Ex. 3]{Bou}),
the elements of ${}^\gamma W\cap W^\nu$ are the minimal length elements of the double cosets in $W_\gamma\backslash W/W_\nu$ 
and are a set of representatives of the double cosets in $W_\gamma\backslash W/W_\nu$.  If $W_\gamma a W_\nu$ is a 
double coset in $W_\gamma\backslash W /W_\nu$ then
there is a unique element $u\in W_\gamma a W_\nu$ of minimal length and 
\begin{equation}
\hbox{if
$w\in W_\gamma a W_\nu$\quad then}\quad 
\begin{array}{l}
\hbox{$w=vuz$, with $v\in W_\gamma$, $z\in W_\nu$} \\
\hbox{and $\ell(w) = \ell(v)+\ell(u)+\ell(z)$. }
\end{array}
\label{doublecosetindexing}
\end{equation}
Note that \eqref{doublecosetindexing} does \emph{not} imply that $\Card(W_\gamma a W_\nu) = \Card(W_\gamma)\Card(W_\nu)$.  

\begin{prop}\label{doublecosetrepbasis}  
Let $u\in {}^\gamma W \cap W^\nu$ so that $u$ is a minimal length element
of a double coset in $W_\gamma\backslash W/W_\nu$.

\item[(a)] If  $w_\gamma u\notin W^\nu$ then $\varepsilon_\gamma T_u \mathbf{1}_\nu = 0$.
\item[(b)] If  $w_\gamma u \in W^\nu$ then
\begin{equation}
\varepsilon_\gamma T_u \mathbf{1}_\nu = 
(-t^{\frac12})^{\ell(w_\gamma)}(t^{-\frac12})^{\ell(w_\nu)}
\sum_{v\in W_\gamma, z\in W_\nu} (-t^{-\frac12})^{\ell(v)}(t^{\frac12})^{\ell(z)} T_{vuz}.
\label{doublecosetexpansion}
\end{equation}
\end{prop}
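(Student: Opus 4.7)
My plan is to begin both parts from the common expansion obtained by substituting the formulas for $\varepsilon_\gamma$ and $\mathbf{1}_\nu$ from \eqref{projectors}:
\[
\varepsilon_\gamma T_u \mathbf{1}_\nu
= (-t^{\frac12})^{\ell(w_\gamma)}(t^{-\frac12})^{\ell(w_\nu)} \sum_{v\in W_\gamma,\, z\in W_\nu} (-t^{-\frac12})^{\ell(v)}(t^{\frac12})^{\ell(z)}\, T_v T_u T_z.
\]
Because $u \in {}^\gamma W$, we always have $\ell(vu) = \ell(v) + \ell(u)$ and hence $T_v T_u = T_{vu}$. What distinguishes (b) from (a) is whether the further collapse $T_{vu}T_z = T_{vuz}$ holds for every $(v,z)$, which is equivalent to the length additivity $\ell(vuz)=\ell(vu)+\ell(z)$, i.e.\ to the assertion that $vu \in W^\nu$.

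For (b), I would first prove the root-theoretic characterization $w_\gamma u \in W^\nu \iff u\alpha_j \in R^+\setminus R_\gamma^+$ for every $j\in J$, using that $w_\gamma$ negates $R_\gamma^+$ while preserving $R^+\setminus R_\gamma^+$. The elementary lemma that $W_\gamma$ also preserves $R^+\setminus R_\gamma^+$ (since each generator $s_k$ sends any positive root $\neq \alpha_k$ to a positive root, and $W_\gamma$ preserves $\mathrm{span}(R_\gamma)$) then upgrades this to $v(u\alpha_j)>0$ for all $v\in W_\gamma$, i.e.\ $vu\in W^\nu$. Consequently $T_v T_u T_z = T_{vuz}$ throughout, and the expansion reduces immediately to \eqref{doublecosetexpansion}.

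For (a) the collapse $T_v T_u T_z = T_{vuz}$ fails, so I would instead regroup the partial sum
\[
\varepsilon_\gamma T_u \mathbf{1}_\nu = \sum_{v\in W_\gamma} (-t^{\frac12})^{\ell(w_\gamma) - \ell(v)} T_{vu}\mathbf{1}_\nu
\]
by the left coset $[vu] = vuW_\nu$. Writing $T_{vu}\mathbf{1}_\nu = (t^{\frac12})^{\ell(vu)-\ell(p_v)}T_{p_v}\mathbf{1}_\nu$, where $p_v\in W^\nu$ is the minimum length representative of $[vu]$, and using $\ell(vu) = \ell(v)+\ell(u)$, the expansion rearranges to
\[
\varepsilon_\gamma T_u \mathbf{1}_\nu = (-1)^{\ell(w_\gamma)}(t^{\frac12})^{\ell(w_\gamma)+\ell(u)} \sum_{p}(t^{-\frac12})^{\ell(p)} T_p\mathbf{1}_\nu \cdot \Bigl(\sum_{v:\,p_v=p}(-1)^{\ell(v)}\Bigr).
\]
Two elements $v,v' \in W_\gamma$ give the same coset iff $(v')^{-1}v \in W_\gamma^u := W_\gamma \cap uW_\nu u^{-1}$, so the fibers of $v\mapsto p_v$ are the left cosets of $W_\gamma^u$ in $W_\gamma$. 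The hypothesis $w_\gamma u \notin W^\nu$ produces some $j\in J$ with $u\alpha_j \in R_\gamma^+$; then $s_\alpha := u s_j u^{-1} = s_{u\alpha_j}$ is a nontrivial reflection lying in $W_\gamma^u$. Right multiplication by $s_\alpha$ is a fixed-point-free involution on each fiber, and since $s_\alpha$ is a reflection in $W$ we have $(-1)^{\ell(vs_\alpha)} = -(-1)^{\ell(v)}$ for every $v\in W$. Pairing $v\leftrightarrow vs_\alpha$ forces each inner sum to vanish, so $\varepsilon_\gamma T_u\mathbf{1}_\nu = 0$.

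The main obstacle will be the case (a) bookkeeping: correctly identifying the fibers of $v\mapsto p_v$ with left $W_\gamma^u$-cosets, extracting the explicit nontrivial reflection $s_\alpha\in W_\gamma^u$ from the failure of $w_\gamma u \in W^\nu$, and invoking the parity-flipping identity $(-1)^{\ell(ws)} = -(-1)^{\ell(w)}$ (valid for any reflection $s$, not just a simple one) to execute the pairing argument.
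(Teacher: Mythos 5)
Your proposal is correct but takes a genuinely different route from the paper's. For part (b), the paper shows that $w_\gamma u\in W^\nu$ forces $W_\gamma\cap uW_\nu u^{-1}=\{1\}$ and then combines the cardinality count $\Card(W_\gamma u W_\nu)=\Card(W_\gamma)\Card(W_\nu)$ with the length-additive normal form \eqref{doublecosetindexing} to conclude $T_vT_uT_z=T_{vuz}$ for every pair $(v,z)$; your root-theoretic characterization ($u\alpha_j\in R^+\setminus R_\gamma^+$ for all $j\in J$, and $W_\gamma$ permutes $R^+\setminus R_\gamma^+$) reaches the same length additivity more directly. The contrast for part (a) is sharper: the paper produces a \emph{simple} reflection $s_i$ of $W_\gamma$ inside $W_\gamma\cap uW_\nu u^{-1}$ (tacitly using that this intersection is a standard parabolic subgroup of $W_\gamma$), pushes it across $T_u$ to a generator $s_j$ of $W_\nu$, and reads off $\varepsilon_\gamma T_u\mathbf{1}_\nu=-t\,\varepsilon_\gamma T_u\mathbf{1}_\nu$ in one line; your argument instead regroups the sum by cosets $vuW_\nu$, identifies the fibers with left cosets of $W_\gamma\cap uW_\nu u^{-1}$, and annihilates each inner sum by pairing with the (not necessarily simple) reflection $s_{u\alpha_j}$ via the parity flip $(-1)^{\ell(v s_{u\alpha_j})}=-(-1)^{\ell(v)}$. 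The paper's route is shorter once the structural fact about $W_\gamma\cap uW_\nu u^{-1}$ is granted; yours is longer and needs the bookkeeping you flagged, but avoids invoking that $W_\gamma\cap uW_\nu u^{-1}$ contains a simple reflection of $W_\gamma$, since any reflection suffices for the parity argument.
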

\begin{proof}
The group $W_\gamma$ acts on the coset space $W/W_\nu$.  The coset space $W/W_\nu$ can always be identified with 
the orbit $W\nu$ for some element $\nu\in \fa^*$, where $\fa^* = \fa_\ZZ\otimes_\ZZ \RR$.  
Thus a $W_\gamma$ orbit is $W_\gamma\lambda$ for some $\lambda\in \fa^*$.
We may take $\lambda = u\nu$ where $u$ is minimal length in the orbit $W_\gamma u W_\nu$.
Let $W_\lambda = \mathrm{Stab}_W(\lambda)=uW_\nu u^{-1}$. Since the stabilizer of the $W_\gamma$ action on $\lambda$ is $W_\gamma\cap W_\lambda$,
the elements of the orbit $W_\gamma\lambda$ are indexed by the set $W_\gamma^\lambda$ of minimal length representatives of the cosets 
in $W_\gamma/(W_\gamma\cap W_\lambda)$.  It follows that 
$$W_\gamma uW_\nu = \{ xuy\ |\ x\in W_\gamma^\lambda, y\in W_\nu\}
\qquad\hbox{with}\quad \Card(W_\gamma u W_\nu) = \Card(W_\gamma^\lambda)\Card(W_\nu).$$

\smallskip\noindent
(a) 
 Assume $w_\gamma u\not\in W^\nu$.  Then there exists $s_i\in W_\gamma\cap W_\lambda$.
 So $s_iu = u s_j$ with $s_j\in W_\nu$ and it follows that
$$\varepsilon_\gamma T_u \mathbf{1}_\nu
= (-t^{\frac12})\varepsilon_\gamma T_{s_i}T_u \mathbf{1}_\nu
= (-t^{\frac12})\varepsilon_\gamma T_{s_i u} \mathbf{1}_\nu
= (-t^{\frac12})\varepsilon_\gamma T_{u s_j} \mathbf{1}_\nu
= (-t^{\frac12})\varepsilon_\gamma T_u T_{s_j} \mathbf{1}_\nu
= -t\varepsilon_\gamma T_u \mathbf{1}_\nu,
$$
giving that $\varepsilon_\gamma T_u \mathbf{1}_\nu = 0$.

\smallskip\noindent
(b)  Continuing from the proof of (a), 
$\varepsilon_\gamma T_u \mathbf{1}_\nu\ne 0$ 
only when $W_\gamma\cap W_\lambda = \{1\}$ so that
$$
W_\gamma^\lambda = W_\gamma, \qquad\hbox{in which case}\qquad
\Card(W_\gamma u W_\nu) = \Card(W_\gamma)\Card(W_\nu)\quad\hbox{and} 
$$
$$
W_\gamma uW_\nu = \{ xuy\ |\ x\in W_\gamma, y\in W_\nu\}
\quad\hbox{and}\quad w_\gamma u \in W^{\nu}.
$$
Then
\begin{align*}
\varepsilon_\gamma T_u \mathbf{1}_\nu
&= \left((-t^{\frac12})^{\ell(w_\gamma)}\sum_{v\in W_\gamma} (-t^{-\frac12})^{\ell(v)} T_v\right)
T_u \left((t^{-\frac12})^{\ell(w_\nu)}\sum_{z\in W_\nu} (t^{\frac12})^{\ell(z)} T_z\right) \\
&= (-t^{\frac12})^{\ell(w_\gamma)}
(t^{-\frac12})^{\ell(w_\nu)}\sum_{v\in W_\gamma, z\in W_\nu} (-t^{-\frac12})^{\ell(v)}(t^{\frac12})^{\ell(z)} T_vT_u T_z \\
&= (-t^{\frac12})^{\ell(w_\gamma)}
(t^{-\frac12})^{\ell(w_\nu)}\sum_{v\in W_\gamma, z\in W_\nu} (-t^{-\frac12})^{\ell(v)}(t^{\frac12})^{\ell(z)} T_{vuz},
\end{align*}
where the first equality follows from \eqref{projectors} and the third equality follows from \eqref{doublecosetindexing}.
\end{proof}

Since $\overline{\mathbf{1}_\nu} = \mathbf{1}_\nu$ and $\overline{\varepsilon_\gamma} = \varepsilon_\gamma$,
the restriction of $\overline{\phantom{T}}\colon H\to H$ provides
$$
\overline{\phantom{T}}\colon \varepsilon_\gamma H \mathbf{1}_\nu \to \varepsilon_\gamma H \mathbf{1}_\nu,
\qquad\hbox{and}\qquad
\varepsilon_\gamma H \mathbf{1}_\nu
\quad\hbox{has basis}\quad \{ \varepsilon_\gamma T_u\mathbf{1}_\nu\ |\ \hbox{$u\in {}^\gamma W$ and $w_\gamma u\in W^\nu$} \},
$$
and the restriction of the Bruhat order on $W$  provides a partial order on 
the set $\{ u\in {}^\gamma W\ |\ w_\gamma u\in W^\nu\}$.
With these structures,
$\varepsilon_\gamma H \mathbf{1}_\nu$ is a KL-module and,
from \eqref{singularKLbasis} and \eqref{parabolicKLbasis},
\begin{equation}
\varepsilon_\gamma H \mathbf{1}_\nu
\quad\hbox{has KL-basis}\quad \{ C_{w_\gamma u}\mathbf{1}_\nu\ |\ \hbox{$u\in {}^\gamma W$ and $w_\gamma u\in W^\nu$} \}
\label{singularparabolicKLbasis}
\end{equation}
and, using %\eqref{LusztigS} and 
\eqref{LusztigP} and Proposition \ref{doublecosetrepbasis},
\begin{align}
C_{w_\gamma x}\mathbf{1}_\nu 
&= \sum_{w_\gamma y\le w_\gamma x\atop y\in {}^\gamma W} 
(-1)^{\ell(w_\gamma x)-\ell(y)} P_{w_\gamma y,w_\gamma x}(t^{\frac12}) \varepsilon_\gamma T_y\mathbf{1}_\nu 
\nonumber \\
&=\sum_{w_\gamma y\le w_\gamma x\atop y\in {}^\gamma W, w_\gamma y\in W^\nu}
(-1)^{\ell(w_\gamma x)-\ell(w_\gamma y)} P^\nu_{w_\gamma y,w_\gamma x}(t^{\frac12}) \varepsilon_\gamma T_y\mathbf{1}_\nu,
\label{LusztigSP}
\end{align}
where, as in \eqref{singKLpolysdefn},
$$P_{w_\gamma y, w_\gamma x}^\nu = \sum_{z\in W_\nu} (-1)^{\ell(w_\gamma y)-\ell(w_\gamma yz)}
P_{w_\gamma y z, w_\gamma x}.$$

%%%%%%%%
%%%%%%%%
%%%%%%%%

\section{Decomposition numbers via Hecke algebras}

\subsection{Affine Kac-Moody and $\nu$ negative level rational}

With $W_0$ and $\fa^*_\ZZ$ as in \eqref{wtsWeylgpdefn}, let $\mathring{\fg}$ be a finite dimensional
complex reductive Lie algebra with Cartan subalgebra $\fa$ and Borel subalgebra
$\mathring{\fb}$ containing $\fa$ such that the Weyl group is $W_0$, the weight lattice is $\fa_\ZZ^*$ and
the simple coroots are $\alpha_1^\vee, \ldots, \alpha_n^\vee$.
Let $\fg$ be the corresponding affine Kac-Moody Lie algebra (see \cite[(7.2.2)]{Kac}),
\begin{equation}
\fg = (\mathring{\fg}\otimes_\CC \CC[\epsilon,\epsilon^{-1}])\oplus\CC K \oplus \CC d, 
\qquad\hbox{with Cartan subalgebra}\quad \fh = \fa\oplus \CC K \oplus \CC d
\label{gAffineandCartan}
\end{equation}
and positive real roots $R_{\mathrm{re}}^+$ and integral weight lattice $\fh_\ZZ^*$.  Let
$\alpha_0^\vee, \alpha_1^\vee,\ldots, \alpha_n^\vee$ be the simple coroots of $\fg$ with respect
to the Borel subalgebra 
$\fb = \mathring{\fb}\oplus \CC K \oplus \CC d \oplus (\mathring{\fg}\otimes_\CC \epsilon\CC[\epsilon])$ (see
\cite[Theorem 7.4]{Kac}) and let
$$\hat\rho\in \fh^*
\qquad \hbox{such that $\langle \hat\rho, \alpha_i^\vee\rangle=1$, for $i\in \{0, 1,\ldots, n\}$}$$
(see \cite[(6.2.8) and (12.4.3)]{Kac}).  For $\nu\in \fh^*$ define
\begin{equation}
\Delta^+(\nu) = \{ \alpha\in R_{\mathrm{re}}^+\ |\ \langle \nu+\hat\rho,\alpha^\vee\rangle\in \ZZ\} 
\qquad\hbox{and}\qquad
W(\nu) =\langle s_\alpha\ |\ \alpha\in \Delta^+(\nu)\rangle
\label{Wnudefn}
\end{equation}
and define the \emph{dot action of $W$ on $\fh^*$} by
\begin{equation}
w\circ\lambda = w(\lambda+\hat\rho)-\hat\rho,
\qquad\hbox{for $w\in W$ and $\lambda\in \fh^*$.}
\label{affinedotaction}
\end{equation}
If $\nu\in \fh_\ZZ^*$ then $W(\nu) = \langle s_\alpha\ |\ \alpha\in R_{\mathrm{re}}^+\rangle = W$
as defined in  \eqref{Wnudefn} is the full affine Weyl group.

For $\lambda\in \fh^*$ let
\begin{equation}
\begin{array}{ll}
M(\lambda)\qquad &\hbox{be the Verma module of highest weight $\lambda$ for $\fg$, and} \\
L(\lambda) &\hbox{the irreducible module of highest weight $\lambda$ for $\fg$}.
\end{array}
\label{Vermaforg}
\end{equation}

A weight $\nu\in \fh^*$ is \emph{negative level rational} if $\nu$ satisfies:
\begin{enumerate}
\item[(a)] (negativity/antidominance) 
If $i\in\{0,1, \ldots, n\}$ then
$\langle \nu+\hat\rho,\alpha_i^{\vee} \rangle\in \QQ_{\leq 0}$,
\item[(b)] (negative level) $\langle \nu+\hat\rho, K\rangle\in \QQ_{<0}$.
\end{enumerate}
Given condition (a) the only additional content of (b) is that $\langle \nu+\hat\rho, K\rangle\ne 0$,
(see the statement of \cite[Theorem 3.3.6]{KT96}).

\begin{thm} \label{KTnegativelevel} \emph{\cite[Theorem 0.1]{KT96}}
Let $\fg$ be an affine Kac-Moody Lie algebra and let
$\nu\in \fh^*$ be negative level rational.
Let $w\in W$ be of minimal length in $wW(\nu)$.
Letting $<$ denote the Bruhat order on $W$, let $x\in W(\nu)$ be such that  
\begin{equation}
\hbox{if $w'\in W$ and $w'<wx$}\qquad 
\hbox{then}\quad\hbox{$w'\circ \nu\neq wx\circ\nu$.}
\label{wallcondition}
\end{equation}
Let $\mathrm{ch}(M)$ denote the character (weight space generating function) of a $\fg$-module $M$.  Then
\begin{equation}
\mathrm{ch}(L(wx\circ \nu))
=\sum_{y\leq _{\nu}x} (-1)^{\ell_{\nu}(x)-\ell_{\nu}(y)}P_{y,x}^{\nu}(1)
\mathrm{ch}(M(wy\circ \nu)),
\label{KTneglevelLintermsofM}
\end{equation}
where $\ell_\nu$ is the length function, $\le_\nu$ is the Bruhat order and 
$P_{y,x}^\nu$ are the Kazhdan-Lusztig polynomials (see \eqref{LusztigC})
for the Coxeter group $W(\nu)$, and the sum is over $y\in W(\nu)$ such that
$y\le_\nu x$.
\end{thm}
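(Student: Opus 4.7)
The plan is to reduce the character identity to a computation in the Grothendieck group of a suitable subcategory of the BGG category $\cO$ for $\fg$, and then to carry out that computation by realizing the modules geometrically as twisted $D$-modules on the Kashiwara affine flag manifold. First I would localize to the block of category $\cO$ containing $L(\nu)$: by the linkage principle, the composition factors of any $M(wy\circ\nu)$ for $y\in W(\nu)$ have highest weights in the single $W(\nu)$-orbit of $wx\circ\nu$, and condition \eqref{wallcondition} guarantees that the map $y\mapsto wy\circ\nu$ from the Bruhat interval $\{y\le_\nu x\}\subseteq W(\nu)$ to that orbit is injective, so the right-hand side of \eqref{KTneglevelLintermsofM} is unambiguously indexed.

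Next I would invoke the negative-level analog of Beilinson--Bernstein localization. The Kashiwara flag ind-scheme $X=G/B$ for the affine Kac-Moody group carries an Iwahori-orbit stratification by Schubert cells $X_w$ indexed by $w\in W$, and at the central charge determined by $\langle \nu+\hat\rho, K\rangle<0$ the standard (lower-$!$) extension of an appropriate twisted line bundle on $X_w$ has global sections isomorphic to the Verma module $M(w\circ\nu)$, while the IC extension on $\overline{X_w}$ has global sections isomorphic to the simple module $L(w\circ\nu)$. Granted this dictionary, the classical Kazhdan--Lusztig decomposition of the class of $\mathrm{IC}(\overline{X_{wx}})$ in the Grothendieck group of $I$-equivariant perverse sheaves---with coefficients $P^\nu_{y,x}(1)$ computed for the Coxeter group $W(\nu)$ as in Section \ref{HeckeKLsection}---translates directly into \eqref{KTneglevelLintermsofM} after applying the character map.

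The main obstacle is the geometric input itself. The affine flag manifold is an ind-scheme of infinite type, so one must carefully set up the category of Iwahori-equivariant twisted $D$-modules at the prescribed negative central charge and verify that the appropriate global sections functor is exact on the subcategory generated by standard and costandard objects; this is the duality that reverses the direction of the usual positive-level Beilinson--Bernstein correspondence and is the technical heart of the Kashiwara--Tanisaki program. A secondary but non-formal check is that \eqref{wallcondition} identifies $wx$ as the minimal-length element of its dot-stabilizer coset $\{w'\in W \mid w'\circ\nu=wx\circ\nu\}$, so that Schubert strata appearing in $\overline{X_{wx}}$ correspond bijectively to the terms $y\le_\nu x$ on the right of the claimed formula; once both the geometric realization and this combinatorial compatibility are in hand, the identity is formal Grothendieck-group bookkeeping via the KL decomposition of Proposition \ref{Prop_KLBases} applied to the Hecke algebra of $W(\nu)$.
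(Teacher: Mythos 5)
The paper does not prove Theorem \ref{KTnegativelevel}. It states it as \cite[Theorem 0.1]{KT96} and uses it, together with Shan's Jantzen-filtration strengthening \cite{Sh}, as the external input for the Grothendieck-group dictionaries \eqref{R}, \eqref{S}, \eqref{PR}, \eqref{PS}, \eqref{QG} of Section \ref{KTtoHeckemodules}; reproving Kashiwara--Tanisaki is outside the paper's scope. So there is no in-paper proof to compare against, but your sketch does outline the actual strategy of \cite{KT95,KT96}: localize the negative-level block of category $\cO$ to twisted $D$-modules on a flag variety for the affine Kac--Moody group, realize Vermas and simples as standard and intersection-cohomology extensions on Schubert strata, and push the Kazhdan--Lusztig decomposition of the IC object through the global-sections functor.

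Two details in the sketch are off, and they are exactly the points where the real work of Kashiwara--Tanisaki lives. First, the negative-level localization takes place on Kashiwara's ``thick'' flag manifold, a non-quasi-compact \emph{scheme} covered by infinite-dimensional affine opens, not on the affine flag ind-scheme; the duality that reverses the localization dictionary relative to the positive-level theorem, and hence determines which extension of which stratum realizes a Verma or a simple, is precisely what has to be proved. Writing ``ind-scheme'' and declaring ``lower-$!$ extension gives Vermas'' papers over the technical heart of the argument. Second, condition \eqref{wallcondition} says that $wx$ is the shortest element of the fiber $\{w'\in W : w'\circ\nu = wx\circ\nu\}$; it does \emph{not} make $y\mapsto wy\circ\nu$ injective on $\{y\leq_\nu x\}$. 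When $\mathrm{Stab}_{W(\nu)}(\nu)$ is nontrivial, the sum in \eqref{KTneglevelLintermsofM} genuinely repeats Verma characters, and the notation $P^\nu_{y,x}$ here (ordinary Kazhdan--Lusztig polynomials of the Coxeter group $W(\nu)$, per \eqref{LusztigC}) should not be conflated with the singular polynomials also denoted $P^\nu_{y,x}$ in \eqref{singKLpolysdefn}.
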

\noindent
This statement generalizes a conjecture of Lusztig
\cite[Conj. 2.5c]{Lu90}, proved by Kashiwara-Tanisaki in \cite{KT95}.  It is 
a negative level affine version of the original ``Kazhdan-Lusztig conjecture'' 
of \cite[Conjecture 1.5]{KL79}.  A refinement of \cite[Conjecture 1.5]{KL79} is the Jantzen 
conjecture, which was proved by Beilinson-Bernstein \cite[Cor. 5.3.5]{BB}.  The ``Jantzen conjecture'' 
result generalizes to the negative level affine setting, as proved by Shan \cite[Proposition 5.5 and Theorem 6.4]{Sh}.
%It is generally 
%believed that the stronger ``Jantzen conjecture'' statement also holds for the negative level 
%affine setting of Theorem \ref{KTnegativelevel} (one important reference in this direction is
%\cite{SVV}).  
%Generally, in this paper, we assume that the stronger Jantzen conjecture form of Theorem \ref{KTnegativelevel} holds.  

\subsection{The Kashiwara-Tanisaki theorem in Hecke algebra notation}\label{KTtoHeckemodules} 

The purpose of this subsection is to repackage the result of Theorem \ref{KTnegativelevel} (in the strong ``Jantzen conjecture'' form)
into the Hecke algebra notations of Section \ref{HeckeKLsection}.

%Let $\fg$ be an affine Kac-Moody Lie algebra with Cartan subalgebra $\fh$ and Weyl group $W$ generated by simple reflections
%$s_0, s_1, \ldots, s_n$.   Let $\rho\in \fh^*$ such that 
%$\langle \rho, \alpha_i^\vee\rangle = 1$ for $i=0,1,\ldots, n$.
Keep the notations of Theorem \ref{KTnegativelevel} so that $\fg$ is the affine Lie algebra, $\fh$ is the Cartan subalgera
as in \eqref{gAffineandCartan} and $\nu\in \fh^*$ is negative level rational.
$$\hbox{Let $H$ be the Hecke algebra of the group $W(\nu)$,}$$
where $W(\nu)$ is as defined in \eqref{Wnudefn} and $H$ is as defined in \eqref{eqnT2}.  Notice that we drop the dependence on $\nu$ in the notation for the Hecke agebra. From now on, we will also drop the dependence on $\nu$ in the Coxeter group and write $W$ instead of $W(\nu)$, to simplify the notation.
Let
$$\hbox{$K(\cO[\nu])$ be the free $\ZZ[t^{\frac12}, t^{-\frac12}]$-module generated by symbols
$[M(x\circ\nu)]$}
$$
for $x\in W^\nu$,  the set of minimal representatives of $W_\nu$-cosets (where  $W_\nu = \mathrm{Stab}(\nu)$ is the stabilizer of $\nu$ in $W$ under the dot action).  Define elements $[L(y\circ\nu)]$, $y\in W^\nu$, by the equation
$$[M(x\circ\nu)] = 
\sum_{y\le x} 
\left(\sum_{i\in \ZZ_{\ge 0}} \left[ \frac{M^{(i)}(x\circ\nu)}{M^{(i-1)}(x\circ\nu)} :
L(y\circ\nu)\right] (t^{\frac12})^i \right) [L(y\circ\nu)],$$
where $[M:L(\mu)]$ denotes the multiplicity of the simple $\fg$-module $L(\mu)$ of highest weight
$\mu$ in a composition series of $M$ and
$$M(\lambda)=M(\lambda)^{(0)} \supseteq M(\lambda)^{(1)}\supseteq
\cdots
\qquad\hbox{is the Jantzen filtration of $M(\lambda)$,}
$$
see, for example, \cite[(2.5)]{OR}.

\medskip\noindent
\emph{Case R: regular $\nu$.}
Let $\nu\in \fh^*$ such that $\langle \nu+\hat\rho, \alpha_i^\vee\rangle\in \QQ_{<0}$.
%Let $\cO[\nu]$ be the block of $\cO$ containing
%$L(\nu)$ with $\langle \nu+\rho, \alpha_i\rangle\in \QQ_{<0}$ 
Then $\mathrm{Stab}(\nu)=\{1\}$ under the dot action of \eqref{affinedotaction}.
In this case the strong ``Jantzen conjecture'' version of Theorem \ref{KTnegativelevel} 
(see  \cite[Theorem 6.4 and Proposition 5.5]{Sh}) is equivalent to a $\ZZ[t^{\frac12}, t^{-\frac12}]$-module isomorphism
\begin{equation}
\begin{matrix}
K(\cO[\nu]) &\mapright{\sim} &H \\
[M(y\circ \nu)] &\longmapsto &T_y \\
[L(x\circ \nu)] &\longrightarrow &C_x
\end{matrix}
\qquad\qquad\hbox{where $T_y$ and $C_x$ are as in \eqref{LusztigC}.}
\label{R}%\tag{R}
\end{equation}

\medskip\noindent
\emph{Case S: singular $\nu$.}
Let $\nu\in \fh^*$ such that $\langle \nu+\hat\rho, \alpha_i^\vee\rangle\in \QQ_{\le0}$ and let 
$$J=\{ j\in \{0,1,\ldots, n\}\ |< \langle \nu+\hat\rho, \alpha_j^\vee\rangle = 0\}
\qquad\hbox{so that}\qquad
W_\nu = \langle s_j\ |\ j\in J\rangle$$
is the stabilizer of the dot action of $W$ on $\nu$.  % (see \eqref{affinedotaction}).
%Let $\cO[\nu]$ is the block of $\cO$ containing $L(\nu)$.
Let $\mathbf{1}_\nu$ be the element of $H$ defined in \eqref{projectors}.
Then the strong ``Jantzen conjecture" version of Theorem \ref{KTnegativelevel} 
(see  \cite[Theorem 6.4 and Proposition 5.5]{Sh}) is equivalent to a $\ZZ[t^{\frac12}, t^{-\frac12}]$-module isomorphism
\begin{equation}
\begin{matrix}
K(\cO[\nu]) &\mapright{\sim} &H\mathbf{1}_\nu \\
[M(y\circ \nu)] &\longmapsto &T_y\mathbf{1}_\nu \\
[L(x\circ \nu)] &\longrightarrow &C_x\mathbf{1}_\nu
\end{matrix}
\qquad\qquad\hbox{where $T_y\mathbf{1}_\nu$ and $C_x\mathbf{1}_\nu$ are as in \eqref{LusztigS}.}
\label{S}%\tag{S}
\end{equation}

%\begin{remark} Interpret the map $K(\cO[\nu]) \mapright{\sim} H\mathbf{1}_\nu$ of $H\to H\mathbf{1}_\nu$
%in terms of translation to the wall?????
%\end{remark}
%
%\begin{remark} Expain how to interpret the dual KL module in important cases as the projective category
%and BGG reciprocity.
%\end{remark}
%

\subsection{Decomposition numbers for parabolic $\cO$}

Keep the notations for the affine Lie algebra as in \eqref{gAffineandCartan},
and let $e_0, \ldots, e_n$, $f_0, \ldots, f_n$, $\fa$ and $d$
be Kac-Moody generators for $\fg$. Let
$\gamma\subseteq \{0, 1, \ldots, n\}$ with $\gamma\ne \emptyset$
and define, following \cite[\S7]{Soe98}, a $\ZZ$-grading on $\fg$ by
$\deg(d) = 0$,
%\quad
$\deg(h)=0\ \hbox{for $h\in \fa$,}$
$$\deg(e_i) = \begin{cases} 0, &\hbox{if $i\in \gamma$,} \\
1, &\hbox{if $i\not\in \gamma$,}
\end{cases}
\qquad\hbox{and}\qquad
\deg(f_i) = \begin{cases} 0, &\hbox{if $i\in \gamma$,} \\
-1, &\hbox{if $i\not\in \gamma$.}
\end{cases}
$$
Let
\begin{equation}
\fg_\gamma = \{ x\in \fg\ |\ \deg(x)=0\}
\qquad\hbox{and}\qquad
\fb_\gamma = \{ x\in \fg\ |\ \deg(x)\ge 0\}.
\label{levidefn}
\end{equation}
Following the first two paragraphs of \cite[\S3]{Soe98}, the
\emph{parabolic category $\cO$ (with respect to $\deg$)} is the category $\cO_{\fg_\gamma}^\fg$
of $\fg$-modules $M$ such that
\begin{enumerate}
\item[(a)] $M$ is $\fg_\gamma$-semisimple,
\item[(b)] $M$ is $\fb_\gamma$-locally finite, i.e. If $m\in M$ then 
$\dim(U\fb_\gamma\cdot m) < \infty$.
\end{enumerate}
Let $(\fa^*)_\gamma^+$ be an index set for the 
finite dimensional simple $\fg_\gamma$-modules
$\{ L_{\fg_\gamma}(\lambda)\ |\ \lambda\in (\fa^*)_\gamma^+\}$.
The \emph{standard modules in $\cO_{\fg_\gamma}^\fg$} are
\begin{equation}
\Delta_{\fg_\gamma}^{\fg}(\lambda) = U\fg\otimes_{U\fb_\gamma} L_{\fg_\gamma}(\lambda),
\qquad\hbox{for $\lambda\in (\fa^*)^+_\gamma$,}
\label{ParStdmodules}
\end{equation}
where $L_{\fg_\gamma}(\lambda)$ becomes a $\fb_\gamma$-module by setting 
$x n = 0$ if $n\in L_{\fg_\gamma}(\lambda)$ and $x\in \fg$ is homogeneous with $\deg(x)>0$.
The simple modules in $\cO_{\fg_\gamma}^\fg$ are the quotients
$$L(\lambda) = \frac{\Delta_{\fg_\gamma}^{\fg}(\lambda)}{(\hbox{max. proper submodule})},
\qquad\hbox{for $\lambda\in (\fa^*)_\gamma^+$.}
$$

Let $W_\gamma$ be the Weyl group corresponding to $\gamma$ as in \eqref{stabsandsubsets}.
Since $\gamma\ne \emptyset$ and $\fg$ is an affine Kac-Moody Lie algebra, the Lie
algebra $\fg_\gamma$ is finite dimensional and the integrable simple module 
$L_{\fg_\gamma}(\lambda)$ for the Lie algebra $\fg_\gamma$ has a BGG-resolution
(see \cite[Ex.\ 7.8.14]{Dx}),
\begin{equation}
0\longrightarrow \Delta_{\mathfrak{b}}^{\fg_\gamma}(w_\gamma\circ\lambda)
\longrightarrow \cdots \longrightarrow 
\bigoplus_{\stackrel{z\in W_\gamma}{\ell(z) = j}} \Delta_{\fb}^{\fg_\gamma}(z\circ \lambda)
\longrightarrow \cdots \longrightarrow
\Delta_{\fb}^{\fg_\gamma}(\lambda) \longrightarrow L_{\fg_\gamma}(\lambda) \longrightarrow 0.
\label{BGGres}
\end{equation}
where $\Delta_\fb^{\fg_\gamma}(\mu)$ denotes the Verma module of highest weight $\mu$ for $\fg_\gamma$
and $w_\gamma$ is the longest element of $W_\gamma$ (since $\gamma\ne \emptyset$ then
$W_\gamma$ is a finite Coxeter group and $w_\gamma$ exists).
(The dot action in \eqref{BGGres} coincides with the dot action defined in \eqref{affinedotaction} since
$W_\gamma$ is generated by $\{ s_i\ |\ i\not\in \gamma\}$ and both actions satisfy
$s_i\circ \lambda = \lambda - (\langle \lambda,\alpha_i^\vee\rangle +1)\alpha_i$ for
$i\not\in \gamma$.) 

As in \cite[paragraph before Prop.\ 7.5]{Soe98},
parabolic induction of the resolution \eqref{BGGres} to $\fg$ gives
\begin{equation}
0\longrightarrow \Delta_{\mathfrak{b}}^{\fg}(w_\gamma\circ\lambda)
\longrightarrow \cdots \longrightarrow 
\bigoplus_{\stackrel{z\in W_\gamma}{\ell(z) = j}} \Delta_{\fb}^{\fg}(z\circ \lambda)
\longrightarrow \cdots \longrightarrow
\Delta_{\fb}^{\fg}(\lambda) \longrightarrow \Delta_{\fg_\gamma}^\fg(\lambda) \longrightarrow 0,
\label{inducedBGGres}
\end{equation}
where $\Delta_{\fb}^{\fg}(\mu)=M(\mu)$ is the Verma module for $\fg$ as in \eqref{Vermaforg}.  
Thus the multiplicity of a simple $\fg$-module $L(\mu)$ in the
standard module $\Delta_{\fg_\gamma}^\fg(\lambda)$ is
\begin{equation}
[\Delta_{\fg_\gamma}^\fg(\lambda): L(\mu)] = \sum_{z\in W_\gamma} (-1)^{\ell(z)} 
[ \Delta_{\fb}^{\fg}(z\circ \lambda) : L(\mu)].
\label{BGGonchars}
\end{equation}
%Then Soergel concludes \cite[Prop.\ 7.5]{Soe98} that
%$$[\Delta_{\fg_0}^\fg(x\circ \nu):L(y\circ \nu)] = n_{x,y}(1),
%\quad\hbox{for $x,y\in W^\nu$.}
%$$
In the correspondence to the Hecke algebra as in \eqref{S},
$$\begin{matrix}
[M(zy\circ\nu)] = [\Delta_{\fb}^\fg(zy\circ\nu)] &\mapsto &T_{zy} \mathbf{1}_\nu \\
[\Delta_{\fg_\gamma}^\fg(w_\gamma y\circ\nu)] &\mapsto &\varepsilon_\gamma T_y \mathbf{1}_\nu \\
[L(w\circ\nu)] &\mapsto &C_w \mathbf{1}_\nu
\end{matrix}
$$
so that the identity in \eqref{BGGonchars} (which comes from the BGG resolution)
corresponds to the Hecke algebra identity (see \eqref{doublecosetexpansion})
$$\varepsilon_\gamma T_x\mathbf{1}_\nu 
= \sum_{z\in W_\gamma} (-t^{\frac12})^{\ell(w_\gamma)-\ell(z)} T_{zx}\mathbf{1}_\nu,
\qquad\hbox{
where $w_\gamma$ is the longest element of $W_\gamma$.}
$$

Let ${}^\gamma W$ be the set of minimal length representatives of cosets 
in $W_\gamma\backslash W$.
Let 
$$\hbox{$K(\cO_{\fg_\gamma}^\fg[w_\gamma\circ\nu])$ 
be the free $\ZZ[t^{\frac12}, t^{-\frac12}]$-module generated by symbols
$[\Delta_{\fg_\gamma}^\fg(w_\gamma x\circ\nu)]$,}
$$ 
for $x\in {}^\gamma W$ such that $w_\gamma x\in W^\nu$.  
Define elements $[L(w_\gamma y\circ\nu)]$, 
for $y\in {}^\gamma W$ such that $w_\gamma y\in W^\nu$, by the equation
$$[\Delta_{\fg_\gamma}^\fg(w_\gamma x\circ\nu)] = 
\sum_{y\le x} 
\left(\sum_{i\in \ZZ_{\ge 0}} \left[ \frac{\Delta_{\fg_\gamma}^\fg(w_\gamma x\circ\nu)^{(i)}}
{\Delta_{\fg_\gamma}^\fg(w_\gamma x\circ\nu)^{(i+1)} } :
L(w_\gamma y\circ\nu)\right] (t^{\frac12})^i \right) [L(w_\gamma y\circ\nu)],$$
where $[M:L(\mu)]$ denotes the multiplicity of the simple $\fg$-module $L(\mu)$ of highest weight
$\mu$ in a composition series of $M$ and
$$\Delta_{\fg_\gamma}^\fg(\lambda)=\Delta_{\fg_\gamma}^\fg(\lambda)^{(0)} 
\supseteq \Delta_{\fg_\gamma}^\fg(\lambda)^{(1)}\supseteq
\cdots
\qquad\hbox{is the Jantzen filtration of $\Delta_{\fg_\gamma}^\fg(\lambda)$}
$$
(see, for example, \cite[\S1.4, \S2.3 and \S2.10]{Sh} for the Jantzen filtration in this context).

\medskip\noindent
\emph{Case PR: Parabolic $\cO$, regular $\nu$.}
Let $\nu\in \fh^*$ such that $\langle \nu+\rho, \alpha_i\rangle\in \QQ_{<0}$.
Let $\gamma\subseteq \{0,1,\ldots, n\}$ and 
let $\fg_\gamma$ be the corresponding ``standard'' Levi subalgebra of $\fg$ as
defined in \eqref{levidefn}
with Weyl group $W_\gamma = \langle s_k\ |\ k\in \gamma\rangle$
as defined in \eqref{stabsandsubsets}.
%Let $\cO_{\fg_\gamma}^\fg[w_\gamma\circ \nu])$ be the 
%block containing $L(w_\gamma\circ \nu)$ for parabolic category $\cO$ with parabolic 
%$\fg_\gamma$.
Then Theorem \ref{KTnegativelevel}  (or \eqref{S}) combined with \eqref{BGGonchars} and \eqref{parabolicmap}
is equivalent, in the strong ``Jantzen conjecture'' form (see  \cite[Theorem 6.4 and Proposition 5.5]{Sh}) to a 
$\ZZ[t^{\frac12}, t^{-\frac12}]$-module isomorphism
\begin{equation}
\begin{matrix}
K(\cO_{\fg_\gamma}^{\fg}[w_\gamma\circ \nu]) &\mapright{\sim} &\varepsilon_\gamma H \\
[\Delta_{\fg_\gamma}^\fg(w_\gamma y\circ \nu)] &\longmapsto &\varepsilon_\gamma T_y \\
[L(w_\gamma x\circ \nu)] &\longrightarrow &C_{w_\gamma x}
\end{matrix}
\label{PR}%\tag{PR}
\end{equation}

\medskip\noindent
\emph{Case PS: parabolic $\cO$, singular $\nu$.}
Let $\nu\in \fh^*$ such that $\langle \nu+\rho, \alpha_i\rangle\in \QQ_{\le 0}$.
The maps in \eqref{S} and \eqref{PR}
can be packaged into a single statement as follows:
If $\nu\in \fh^*$ is such that $\langle \nu+\rho, \alpha_i\rangle\in \QQ_{\le 0}$ 
and $W_\nu = \mathrm{Stab}(\nu)$ is the stabilizer of $\nu$ in $W$ under the dot action then
\begin{equation}
\begin{matrix}
K(\cO_{\fg_\gamma}^{\fg}[w_\gamma\circ\nu]) &\mapright{\sim} &\varepsilon_\gamma H \mathbf{1}_\nu \\
[\Delta_{\fg_\gamma}^\fg(w_\gamma y\circ \nu)] 
&\longmapsto &\varepsilon_\gamma T_y\mathbf{1}_\nu \\
[L(w_\gamma x\circ \nu)] &\longrightarrow &C_{w_\gamma x}\mathbf{1}_\nu
\end{matrix}
\label{PS}%\tag{PS}
\end{equation}
%where $K(\cO_{\fg_\gamma}^\fg[\nu])$ is the (graded) Grothendieck group 
%of the block containing $L(\nu)$ for parabolic category $\cO$ with parabolic $\fg_\gamma$.

%\begin{remark} The KL polynomials for Weyl groups are local intersection cohomology 
%Poincar\'e polynomials for Schubert varieties 
%The formula from \cite[Theorem 4.3]{KL80} is 
%$$P_{y,w}(q) = \sum_{i\in \ZZ_{\ge 0}} \dim(\cH_{yBy^{-1}}^{2i}(\overline{\cB_w}))q^i,$$
%where $\overline{\cB} = \overline{B^-wB}$ is the Schubert variety in $G/B$ indexed by $w$,
%and $\cH_{yBy^{-1}}^{2i}(\overline{\cB_w})$ is the stalk at $yBy^{-1}$ of the $2i$-th cohomology of the intersection
%cohomology complex of $\overline{\cB_w}$ (MAKE SURE I SAID THIS RIGHT).
%(For the full structure of this correspondence see \cite[(1.6), (1.10.1), Lemma 1.11 and Theorem 1.24]{Lu84}.  Other useful
%references are \cite[Theorem 8.6.23]{CG} and \cite[Theorem 2.8]{Sp}).
%%[I'D LIKE MAKE THIS REMARK IN A NOTE ON THE PROOF ON THE KL-CONJ AND THEN QUOTE GEORDIE AND BEN ]
%\end{remark}

\subsection{Decomposition numbers for quantum groups}

In 1989 and 1990, Lusztig made conjectures that the decomposition numbers for
representations of quantum groups can be picked up by Kazhdan-Lusztig polynomials for the affine Weyl goup.
Let $q\in \CC^\times$ and let $U_q(\mathring{\fg})$ be the Drinfel'd-Jimbo quantum group corresponding to $\mathring{\fg}$.  Let
\begin{align*}
&M_q(\lambda)\quad &\hbox{the Verma module of highest weight $\lambda$ for $U_q(\mathring{\fg})$,} \\
&\Delta_q(\lambda) &\hbox{the Weyl module for $U_q(\mathring{\fg})$ of highest weight $\lambda$,} \\
&L_q(\lambda) &\hbox{the simple module for $U_q(\mathring{\fg})$ of highest weight $\lambda$,}
\end{align*}
the conjectures \cite[Conj. 2.5]{Lu90} and \cite[Conj. 8.2]{Lu89} are
\begin{align*}
L_q(x\circ\nu) &= \sum_{y\in W_0\atop y\le x} (-1)^{\ell(v)+\ell(w)}P_{y,x}(1) M_q(y\circ\nu), 
&\hbox{if $q=1$ or $q^2$ is not a root of unity,} \\
L_q(x\circ\nu) &= \sum_{y\in W\atop y\le x} (-1)^{\ell(v)+\ell(w)}P_{y,x}(1) M_q(y\circ\nu), 
&\hbox{if $q^2$ is a primitive $\ell$-th root of unity,} \\
L_q(x\circ\nu) &= \sum_{y\in W\atop y\le x} (-1)^{\ell(v)+\ell(w)}P_{y,x}(1) \Delta_q(y\circ\nu), 
&\hbox{if $q^2$ is a primitive $\ell$-th root of unity,} 
\end{align*}
where, with $h$ and $\varphi^\vee$ as in \eqref{phicheckdefn} below, $\nu$ is an element of 
\begin{equation}
A_{-\ell-h} = \{ \nu\in \fa^*_\ZZ\ |\ 
\hbox{$\langle \nu,\varphi^\vee\rangle \ge -\ell-1$ and
$\langle \nu, \alpha_i^\vee\rangle \le -1$ for $i\in \{1, \ldots, n\}$}\}.
\label{ellAlcovefirst}
\end{equation}
These conjectures motivated Theorem \ref{negleveltoquantum} below \cite[Theorem 38.1]{KL94}
%(see also \cite[\S 8.4]{Lu94} and \cite{Lu95})
which had been previously conjectured by Lusztig \cite[Conjecture 2.3]{Lu90}.

Theorem \ref{negleveltoquantum} provides a connection between the representations of affine Lie algebras
and the representations of quantum groups.  Let us first sketch this relation on the level of weights.
Keep the notation for affine Lie algebras as in \eqref{gAffineandCartan}-\eqref{affinedotaction}.
Following \cite[\S6.2]{Kac} and coordinatizing $\fh^* = \CC\Lambda_0+\fa^*+\CC\delta$ with
$\langle \fa^*, K\rangle = 0$, $\langle \fa^*, d\rangle=0$, 
$$\langle \Lambda_0, K\rangle = 1,\quad
\langle \Lambda_0, \fa\rangle = 0, \quad
\langle \Lambda_0, d\rangle = 0, \quad
\langle \delta, K \rangle =0, \quad
\langle \delta, \fa\rangle = 0, \quad
\langle \delta, d\rangle = 1,
$$
then $\varphi^\vee\in \fa$ and the \emph{dual Coxeter number} $h$ are such that
\begin{equation}
\alpha_0^\vee = -\varphi^\vee+K,
\qquad\hbox{and}\qquad
\hat\rho = \rho+h\Lambda_0,
\label{phicheckdefn}
\end{equation}
where $\hat\rho\in \fh^*$ and $\rho\in \fa^*$ are as in \eqref{affinedotaction} and \eqref{dotaction}, respectively.
Let $\ell\in \ZZ_{>0}$.
Weights of $\fg$-modules that are level $-\ell-h$ are elements of $(-\ell-h)\Lambda_0+\fa^*+\CC\delta$.
Restricting modules in $\cO_{\fg_0}^\fg$ to the subalgebra $\fg'=[\fg,\fg]$ loses the information of $\CC\delta$, 
and in the diagram
\begin{equation}
\begin{matrix}
(-\ell-h)\Lambda_0+\fa^*+\CC\delta &\longrightarrow
&(-\ell-h)\Lambda_0+\fa^* &\longleftrightarrow &\fa^* \\
(-\ell-h)\Lambda_0+\lambda + a\delta &\longmapsto
&(-\ell-h)\Lambda_0+\lambda &\longmapsto &\lambda
\end{matrix}
\label{afftofinproj}
\end{equation}
the second map is a bijection.
Using the definition of negative level rational from just before Theorem \ref{KTnegativelevel},
%then
\begin{align*}
\{ &\nu\in \fh^*_\ZZ\ |\ \hbox{$\nu+\hat\rho$ is level $-\ell$ and $\nu$ is
negative level rational}\} \\
&= \{ \nu\in \fh^*_\ZZ\ |\ 
\hbox{$\langle \nu+\hat\rho, K\rangle = -\ell$ and
$\langle \nu+\hat\rho, \alpha_i^\vee\rangle \in \QQ_{\le 0}$ for $i\in \{0, \ldots, n\}$}\} \\
&= (-\ell-h)\Lambda_0+\{ \nu\in \fa_\ZZ\ |\ 
\hbox{
$\langle \nu-\ell\Lambda_0+\rho, \alpha_i^\vee\rangle \in \QQ_{\le 0}$ for $i\in \{0, \ldots, n\}$}\} \\
&= (-\ell-h)\Lambda_0+\{ \nu\in \fa_\ZZ\ |\ 
\hbox{$\langle \nu-\ell\Lambda_0+\rho,-\varphi^\vee+K\rangle \in \QQ_{\le 0}$ and
$\langle \nu+\rho, \alpha_i^\vee\rangle \in \QQ_{\le 0}$ for $i\in \{1, \ldots, n\}$}\} \\
&= (-\ell-h)\Lambda_0+\{ \nu\in \fa_\ZZ\ |\ 
\hbox{$\langle \nu,\varphi^\vee\rangle \ge -\ell-1$ and
$\langle \nu, \alpha_i^\vee\rangle \le -1$ for $i\in \{1, \ldots, n\}$}\} \\
&=(-\ell-h)\Lambda_0+A_{-\ell-h},
\end{align*}
and, in light of Theorem \ref{negleveltoquantum} below, the ``source'' of the alcove $A_{-\ell-h}$ in
\eqref{ellAlcovefirst} is the
negative level rational condition for weights of the affine Lie algebra.

Next we compare the 
dot action from \eqref{affinedotaction} to the dot action from \eqref{dotaction}.  
Following \cite[(6.5.2)]{Kac}, the action of a 
translation $t_\mu$ on $\fh^* = \CC\delta+\fa^*+\CC\Lambda_0$ is given by
\begin{align*}
t_{\mu}(a\delta+\lambda+m\Lambda_0)
&=\big(a-\langle \lambda, \mu\rangle
-\hbox{$\frac12$}m\langle \mu, \mu\rangle\big)\delta
+\lambda + m\mu + m\Lambda_0, \quad\hbox{and} \\
w(a\delta+\lambda+m\Lambda_0) &= a\delta + w\lambda + m\Lambda_0,
\ \ \hbox{for $w\in W_0$, the finite Weyl group.}
\end{align*}
Thus, if $\lambda\in \fa^*$ then
\begin{align*}
(t_\mu w)\circ (\lambda+(-\ell-h)\Lambda_0) 
&= (t_\mu w)(\lambda+(-\ell-h)\Lambda_0 + \hat\rho) - \hat\rho \\
&= (t_\mu w)(\lambda+(-\ell-h)\Lambda_0 + \rho+h\Lambda_0) - (\rho+h\Lambda_0) \\
&= t_\mu( w(\lambda+\rho)-\ell\Lambda_0) - \rho - h\Lambda_0 \\
&=( w(\lambda+\rho)-\ell\Lambda_0-\ell\mu) - \rho - h\Lambda_0 \bmod \delta \\
&=(w\circ \lambda) - \ell\mu +(-\ell-h)\Lambda_0 \bmod \delta,
\end{align*}
where it is important to note that that the $\circ$ on the
left side of this equation is the dot action of  \eqref{affinedotaction} and the $\circ$ on the
right hand side is the dot action of \eqref{dotaction}.  This computation is the basis for using
\eqref{afftofinproj} to obtain an action of the \emph{affine} Weyl group $W$ on $\fa^*$ and define
the \emph{level $(-\ell-h)$ dot action of $W$} on $\fa^*$ by
\begin{equation}
(t_\mu w)\circ\lambda = (w\circ\lambda) - \ell \mu = w(\lambda+\rho) - \rho - \ell\mu,
\label{levelelldotactionfirst}
\end{equation}

Now let us state the Kazhdan-Lusztig theorem relating representations of affine Lie algebras
to representations of quantum groups at root of unity.
Let
$$\fg' = [\fg, \fg] = \mathring{\fg}\otimes_\CC \CC[\epsilon,\epsilon^{-1}] + \CC K.$$
In the context of \eqref{stabsandsubsets} and \eqref{levidefn}, let $\gamma = \{0\}$ so that 
$$\fg_\gamma = \fg_0 =\mathring{\fg}
\qquad\hbox{and}\qquad
\varepsilon_\gamma  = \varepsilon_0 = \sum_{w\in W_0} (-t^{\frac12})^{\ell(w_0)-\ell(z)}T_z,
$$
where $w_0$ is the longest element of $W_0$, the Weyl group of $\mathring \fg$.
By restriction, the modules in $\cO_{\fg_0}^\fg$ are $\fg'$-modules.
\begin{thm} \label{negleveltoquantum}
\cite[Theorem 38.1]{KL94}
There is an equivalence of categories
\begin{equation*}
\begin{matrix}
\left\{
\begin{matrix}
%\hbox{integral weight} \\
\hbox{finite length $\fg'$-modules} \\
\hbox{of level $-\ell-h$ in $\cO_{\fg_0}^\fg$} 
\end{matrix}
\right\}
&\stackrel{\sim}{\longleftrightarrow}
&\left\{
\begin{matrix}
\hbox{finite dimensional $U_q(\mathring{\fg})$-modules}\\
\hbox{with $q^{2\ell}=1$}
\end{matrix}
\right\}
\\
\Delta_{\fg_0}^\fg((-\ell-h)\Lambda_0+\lambda)
&\longmapsto
&\Delta_q(\lambda) \\
L((-\ell-h)\Lambda_0+\lambda)
&\longmapsto
&L_q(\lambda)
\end{matrix}
\end{equation*}
\end{thm}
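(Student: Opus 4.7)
The plan is to construct a braided tensor (``fusion'') product on the category of finite length level $-\ell-h$ modules in $\cO_{\fg_0}^\fg$, and then identify the resulting braided monoidal category with finite dimensional $U_q(\mathring{\fg})$-modules via the Drinfeld-Kohno theorem. This is precisely the strategy carried out across the four papers \cite{KL94}, and Theorem \ref{negleveltoquantum} is the output of that program; below I outline how I would traverse the same route.

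First, following \cite{KL94}~I--III, I would define the fusion product. For modules $M_1, M_2$ of level $-\ell-h$ and distinct points $z_1, z_2 \in \CC$, one forms a space of coinvariants over the Lie algebra of $\mathring{\fg}$-valued meromorphic functions on $\PP^1 \setminus \{z_1,z_2,\infty\}$, and globalizes this construction to a flat $\cD$-module on the configuration space. The tensor product $M_1 \,\boxtimes\, M_2$ is extracted as a suitable fiber/limit of this $\cD$-module at an asymptotic parameter; the associativity and braiding constraints come from parallel transport with respect to the Knizhnik-Zamolodchikov flat connection, whose parameter $1/(k+h)$ is finite precisely because the level $k=-\ell-h$ satisfies $k+h = -\ell \ne 0$. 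I would then verify that this operation preserves the subcategory of finite length objects of level $-\ell-h$, is biexact, and assembles into a rigid braided tensor structure.

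Second, I would invoke the Drinfeld-Kohno theorem in the form of \cite{KL94}~IV, which identifies the monodromy of the KZ connection with the $R$-matrix braiding on tensor products of $U_q(\mathring{\fg})$-modules at $q = \exp(\pi\sqrt{-1}/\ell)$ (appropriately normalized for non-simply-laced types so that $q^{2\ell}=1$). This produces a braided tensor functor from the affine side to finite dimensional $U_q(\mathring{\fg})$-modules; upgrading it to an equivalence amounts to matching highest weights and verifying fully faithful behaviour on standard objects. On objects the equivalence necessarily sends $\Delta_{\fg_0}^\fg((-\ell-h)\Lambda_0+\lambda) \mapsto \Delta_q(\lambda)$ and $L((-\ell-h)\Lambda_0+\lambda) \mapsto L_q(\lambda)$, because the parabolically induced standard module on the affine side and the Weyl module on the quantum side are each the universal highest weight object at weight $\lambda$ in their respective braided tensor categories.

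The main obstacle, by a wide margin, is the first step. At \emph{negative} level the modules in $\cO_{\fg_0}^\fg$ are not integrable and the standard modules need not be irreducible, so one cannot appeal to the positive-level Wess-Zumino-Witten framework where the corresponding statements are essentially classical. The hard analytic content is to show that the relevant coinvariant spaces are finite dimensional, that the KZ connection on them has regular singularities of the expected exponents, and that the whole fusion construction is compatible with the Jantzen filtration used in Section \ref{KTtoHeckemodules} to extract Kazhdan-Lusztig multiplicities. These compatibilities are the technical heart of \cite{KL94} and cannot be bypassed.
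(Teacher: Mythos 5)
The paper does not prove this statement; it is cited verbatim from Kazhdan--Lusztig's four-part series as \cite[Theorem 38.1]{KL94}, with the only editorial remark being that the non-simply-laced modifications appear in \cite{Lu94} and \cite{Lu95}. So there is no proof in the paper to compare against, and your proposal is best read as a summary of what the cited reference does. As such it is reasonable: the fusion product via coinvariants/conformal blocks, flatness of the KZ connection with parameter $1/(k+h)$ (finite precisely because $k+h=-\ell\neq 0$), the extension of the Drinfeld--Kohno monodromy comparison to the relevant root of unity, and the difficulty concentrated in making the fusion product well-behaved at negative level on non-integrable objects --- this is indeed the strategy of \cite{KL94}.

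Two caveats. First, the object-level identification $\Delta_{\fg_0}^\fg((-\ell-h)\Lambda_0+\lambda)\mapsto\Delta_q(\lambda)$ is not automatic from ``universal highest weight object in a braided tensor category'' on each side; Kazhdan--Lusztig construct the functor explicitly through a formal-neighborhood limit and then identify images of standard objects by tracking the $\fg_0=\mathring{\fg}$-module structure of the top graded piece, so the phrase ``necessarily sends'' glosses over genuine work. Second, compatibility with the Jantzen filtration is \emph{not} part of \cite[Theorem 38.1]{KL94}: that refinement, which the paper needs to get graded (not merely ungraded) multiplicities, is due to Shan \cite{Sh} and is invoked separately in the paper (see the discussion around \eqref{PS} and \eqref{QG}). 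If you want to describe precisely what the paper uses as input, it is the Kazhdan--Lusztig equivalence \emph{plus} Shan's Jantzen-filtration theorem, and you should not fold the latter into your account of what \cite{KL94} proves.
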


\noindent
This statement of Theorem \ref{negleveltoquantum} is for the simply-laced (symmetric) case.  With the proper 
modifications to this statement the result holds for non-simply laced
cases as well, see \cite[\S 8.4]{Lu94} and \cite{Lu95}.

Let 
$$\hbox{$K(\hbox{fd$U_q(\mathring{\fg})$-mod})$ 
be the free $\ZZ[t^{\frac12},t^{-\frac12}]$-module generated by symbols
$[\Delta_q(\lambda)]$,}
$$ 
for $\lambda\in \fa_\ZZ^*$.
Define elements $[L_q(w_0 y\circ\nu)]$, 
for $\nu\in A_{-\ell-h}$ and $y\in {}^0 W$ such that $w_0 y\in W^\nu$, by the equation
$$[\Delta_q(w_0 x\circ\nu)] = 
\sum_{y\le x} 
\left(\sum_{i\in \ZZ_{\ge 0}} \left[ \frac{\Delta_q(w_0 x\circ\nu)^{(i)}}
{\Delta_q(w_0 x\circ\nu)^{(i+1)} } :
L_q(w_0 y\circ\nu)\right] (t^{\frac12})^i\right) [L_q(w_0 y\circ\nu)],$$
where $[M:L_q(\mu)]$ denotes the multiplicity of the simple $\fg$-module 
$L_q(\mu)$ of highest weight
$\mu$ in a composition series of $M$ and
$$\Delta_q(\lambda)=\Delta_q(\lambda)^{(0)} 
\supseteq \Delta_q(\lambda)^{(1)}\supseteq
\cdots
\qquad\hbox{is the Jantzen filtration of $\Delta_q(\lambda)$}
$$
(see, for example, \cite[\S1.4, \S2.3 and \S2.10 and Cor.\ 2.14]{Sh}  and \cite[\S4]{JM} for the Jantzen filtration in this context).

\medskip\noindent
\emph{Case QG: quantum groups, integral weights.}
The maps in \eqref{PS} combined with the result of Theorem \ref{negleveltoquantum} 
can be packaged in terms of the affine Hecke algebra as follows:
Let $\nu\in A_{-\ell-h}$ and let $W_\nu = \mathrm{Stab}(\nu)$ 
is the stabilizer of $\nu$ in $W$ under the level $-\ell-h$ dot action.  Then
\begin{equation}
\begin{matrix}
K(\hbox{fd$U_q(\mathring{\fg})$mod}) &\mapright{\sim} 
&\displaystyle{ \bigoplus_{\nu\in A_{-\ell-h}} \varepsilon_0 H \mathbf{1}_\nu }\\
[\Delta_q(w_0 y\circ \nu)] 
&\longmapsto &\varepsilon_0 T_y\mathbf{1}_\nu \\
[L_q(w_0 x\circ \nu)] &\longrightarrow &C_{w_0 x}\mathbf{1}_\nu
\end{matrix}
\label{QG}%\tag{QG}
\end{equation}
%See \cite[Cor.\ 2.14]{Sh} for a similar result.

\section{The Fock space Hecke KL-module in the general setting}

%PUT IN A REMARK ABOUT SIMILARITY TO quantum BRUHAT GRAPH.  see Liz's unfinished preprint.

%\subsection{The affine Weyl group}

Keep the notation for the finite Weyl group $W_0$, the simple reflections $s_1, \ldots, s_n$ 
and the weight lattice $\fa_\ZZ^*$ as in \eqref{wtsWeylgpdefn}.  The \emph{affine Weyl group} is
\begin{equation}
W = \{ t_\mu w\ |\ \mu\in \fa_\ZZ^*, w\in W_0\}, \qquad\hbox{with}\qquad
t_\mu t_\nu = t_{\mu+\nu}, \quad\hbox{and}\quad
wt_\mu = t_{w\mu} w,
\label{affWeyldefn}
\end{equation}
for $\mu, \nu\in \fa_\ZZ^*$ and $w\in W_0$.

Let $\ell\in \ZZ_{>0}$.  Following \eqref{levelelldotactionfirst}, 
the \emph{level $(-\ell-h)$ dot action of $W$ on $\fa_\ZZ^*$} is given by
\begin{equation}
(t_\mu w)\circ\lambda = (w\circ\lambda) - \ell \mu = w(\lambda+\rho) - \rho - \ell\mu,
\label{levelelldotaction}
\end{equation}
for $\mu\in \fa_\ZZ^*$, $w\in W_0$ and $\lambda\in \fa_\ZZ^*$.

\subsection{The affine Hecke algebra $H$}

Keep the notation for the finite Weyl group $W_0$, the simple reflections $s_1, \ldots, s_n$ 
and the weight lattice $\fa_\ZZ^*$ as in \eqref{wtsWeylgpdefn}.  For $i,j\in \{1, \ldots, n\}$ with $i\ne j$, let
$$m_{ij}\quad\hbox{denote the order of $s_is_j$ in $W_0$}$$
so that $s_i^2=1$ and $(s_is_j)^{m_{ij}}=1$ are the relations for the Coxeter presentation of $W_0$.
The \emph{affine Hecke algebra} is
\begin{equation}
H = \hbox{$\ZZ[t^{\frac12}, t^{-\frac12}]$-span}\{X^\mu T_w\ |\ \mu\in \fa_\ZZ^*, w\in W_0\},
\label{affHeckedefn}
\end{equation}
with $\ZZ[t^{\frac12}, t^{-\frac12}]$ basis $\{X^\mu T_w\ |\ \mu\in \fa_\ZZ^*, w\in W_0\}$ and relations
\begin{equation}
(T_{s_i} - t^{\frac{1}{2}})(T_{s_i} + t^{-\frac{1}{2}}) =0, \qquad
%\qquad\hbox{and}\qquad
\underbrace{T_{s_i}T_{s_j}T_{s_i}\ldots}_{m_{ij}\ \mathrm{factors}} 
= \underbrace{T_{s_j}T_{s_i}T_{s_j}\ldots}_{m_{ij}\ \mathrm{factors}},
\end{equation}
\begin{equation}
X^{\lambda + \mu} = X^{\lambda}X^{\mu}, \quad\hbox{and}\quad
T_{s_i}X^{\lambda} - X^{s_i\lambda}T_{s_i} = (t^{\frac{1}{2}} - t^{-\frac{1}{2}})\left(\frac{X^{\lambda} - X^{s_i\lambda}}{1 - X^{-\alpha_i}}\right),
\end{equation}
for $i,j\in \{1,\ldots, n\}$ with $i\ne j$ and $\lambda, \mu\in \fa_\ZZ^*$.
The \emph{bar involution on $H$} is the $\ZZ$-linear automorphism 
$\overline{\phantom{T}}\colon H\to H$ given by 
\begin{equation}
\overline{t^{\frac12}} = t^{-\frac12}, \qquad \overline{T_{s_i}} = T_{s_i}^{-1},
\qquad\hbox{and}\qquad
\overline{X^\lambda} = T_{w_0}X^{w_0\lambda}T_{w_0}^{-1}.
\label{baronH}
\end{equation}
for $i=1,\ldots, n$ and
$\lambda, \mu\in  \fa_\ZZ^*$.
For $\mu\in \fa_\ZZ^*$ and $w\in W_0$ define 
\begin{equation}
X^{t_\mu w} = X^\mu (T_{w^{-1}})^{-1}
\qquad\hbox{and}\qquad
T_{t_\mu w} 
%= T_u X^{\mu^+} T_{w_0}^{-1} T_{w_0 u^{-1} w}
=T_x X^{\mu^+}(T_{x^{-1} w})^{-1},
\label{BernsteinCoxeterconversion}
\end{equation}
where $\mu^+$ is the dominant representative of $W_0\mu$ and $x\in W_0$ is minimal length such that
$\mu = x\mu^+$.
% $\mu = u\mu^+$ with $u\in W_0$ and $\mu^+$ dominant with $\ell(t_{\mu^+})=\ell(t_{\mu^+} u^{-1}w)+\ell(w^{-1}u)$ %[OTHERWISE CAN TAKE JUST $\mu^+$ TO BE STRICTLY DOMINANT].
%  PUT A REMARK ABOUT HOW THIS IS A TRICKY WAYTO SAY THE USUAL CONVERSION.
\begin{remark}
Formulas \eqref{baronH} and \eqref{BernsteinCoxeterconversion} are just a reformulation of the usual 
bar involution 
and the conversion 
between the Bernstein and Coxeter presentations of the affine Hecke algebra
(see for example \cite[Lemma 2.8 and (1.22)]{NR}).
\end{remark}

\subsection{Definition of $\cP_{-\ell-h}^+$}

Following \eqref{ellAlcovefirst} and \eqref{QG}, define
\begin{equation}
A_{-\ell-h} = \{ \nu\in \fa^*_\ZZ\ |\ 
\hbox{$\langle \nu,\varphi^\vee\rangle \ge -\ell-1$ and
$\langle \nu, \alpha_i^\vee\rangle \le -1$ for $i\in \{1, \ldots, n\}$}\}.
\label{ellAlcove}
\end{equation}
and
\begin{equation}
\cP^+_{-\ell-h} = \bigoplus_{\nu\in A_{-\ell-h}} \varepsilon_0 H\mathbf{1}_\nu,
\label{Heckemoduledefn}
\end{equation}
where $\varepsilon_0$ and $\mathbf{1}_\nu$ are formal symbols satisfying
$\overline{\varepsilon_0}=\varepsilon_0$, 
$\overline{\mathbf{1}_\nu} = \mathbf{1}_\nu$,
$$\hbox{$\varepsilon_0 T_w = (-t^{-\frac12})^{\ell(w)}\varepsilon_0$ for $w\in W_0$}
\qquad\hbox{and}\qquad
\hbox{$T_y\mathbf{1}_\nu = (t^{\frac12})^{\ell(y)}\mathbf{1}_\nu$ for $y\in W_\nu$,}
$$
where $W_\nu = \mathrm{Stab}_W(\nu)$ under the level $(-\ell-h)$ dot action of $W$ on $\fa_\ZZ^*$.
It is important to note  that here the $\mathbf{1}_\nu$ are formal symbols 
(and not elements of the Hecke algebra as in the case of \eqref{projectors})
so that $\mathbf{1}_\nu\ne \mathbf{1}_\gamma$ if %$\nu,\gamma\in A_{-\ell-h}$ with
$\nu\ne \gamma$
(even though it may be that $W_\nu=W_\gamma$).
Define a bar involution 
\begin{equation}
\overline{\phantom{T}}\colon \cP_{-\ell-h}^+\to \cP_{-\ell-h}^+
\qquad\hbox{by}\qquad
\overline{\varepsilon_0 h \mathbf{1}_\nu } = \varepsilon_0 \bar h \mathbf{1}_\nu,
\quad\hbox{for $\nu\in A_{-\ell-h}$ and $h\in H$.}
\label{barforP}
\end{equation}
For $\lambda\in \fa^*_\ZZ$ define
\begin{equation}
[T_\lambda] = [T_{w_0y\circ\nu}] = \varepsilon_0 T_y \mathbf{1}_\nu
\qquad\hbox{and}\qquad
[X_\lambda] = [X_{w_0v\circ\nu}]
= \varepsilon_0 X^v \mathbf{1}_\nu,
\label{bracketTXdefn}
\end{equation}
where 
\begin{equation}
\lambda = w_0y\circ\nu = w_0v\circ\nu,\qquad\hbox{with $\nu\in A_{-\ell-h}$,
\quad and}
\label{Tlambdaalcove}
\end{equation}
\begin{enumerate}
\item[(T)] 
$y\in W$ is such that $T_{yu}=T_{y}T_u$ for any $u\in W_\nu$ 
%the unique alcove which contains $\lambda$ in its closure and is closest to $1$,
and 
\item[(X)]
$v\in W$ is 
%the unique alcove which contains $\lambda$ in its closure and \emph{is on the positive side of all hyperplanes} that go through $\lambda$, that is 
such that $X^{vu}
%=X^{w_0v}T_{u^{-1}}^{-1}
=X^{v}T_{u}$ for any $u\in W_{\nu}$.
\end{enumerate}
The condition (T) is equivalent to $y$ being a minimal length representative of the coset $yW_\nu$, i.e.
$y\in W^\nu$.
%Notice that condition (X) is really the (X) analogue of condition (T) since $w_0y\in W$ is the unique alcove 
%which contains $\lambda$ in its closure and is closest to $1$ if and only if $w_0y$ is a minimal length representative of the coset $w_0yW_\nu$, which is equivalent to $\ell(w_0yu)=\ell(w_0y)+\ell(u)$ for any $u\in W_\nu$, which then implies $T_{w_0yu}=T_{w_0y}T_u$ for any $u\in W_\nu$.

%Let $\mu\in \fa^*_\ZZ$ and $w\in W_0$ to write
%\begin{equation}
%v =t_\mu w.
%\qquad\hbox{Then}\qquad
%[X_\lambda] = \varepsilon_0 X^v\mathbf{1}_\nu
%=\varepsilon_0 X^{t_\mu w} \mathbf{1}_\nu
%=\varepsilon_0 X^\mu (T_{w^{-1}})^{-1}\mathbf{1}_\nu.
%\label{bracketXmuwform}
%\end{equation}
%The weight $\lambda$ is the $-\ell w_0\mu$-translate of the element $(w_0w)\circ \nu$ since
%\begin{equation}
%\lambda = w_0v\circ\nu = w_0t_\mu w\circ\nu 
%= t_{w_0\mu}(w_0w)\circ\nu = -\ell w_0\mu+(w_0w)\circ\nu.
%\label{dotintranslation}
%\end{equation}
%We would like to say that 
%$$w_0w\quad\hbox{is a minimal length representative of $w_0w(W_0\cap W_\nu)$??????}
%$$
%If $i\in \{1, \ldots, n\}$ then
%\begin{align*}
%s_i\circ\lambda &= s_iw_0v\circ \nu = s_iw_0t_\mu w\circ \nu 
%= w_0 s_k t_\mu w\circ \nu = w_0 t_{s_k\mu}(s_kw)\circ \nu.
%\end{align*}
%where $s_k = w_0s_iw_0$.  This is why we usually (ALWAYS???) have
%$$[X_\lambda] = \varepsilon_0 X^\mu T_{w^{-1}}^{-1}\mathbf{1}_\nu
%\quad\hbox{and}\quad
%[X_{s_i\circ\lambda}] = \varepsilon_0 X^{s_k\mu} T_{(s_kw)^{-1}}^{-1}\mathbf{1}_\nu.
%$$
%THIS HAPPENS if $w_0 t_{s_k\mu}(s_kw)$ is on the positive side
%of all hyperplanes that go through $s_i\circ \lambda$.

\subsection{The straightening laws for $[T_\lambda]$}
The following Proposition is a special case of the situation in
Proposition \ref{doublecosetrepbasis}.  As in Proposition \ref{doublecosetrepbasis}, when
$\lambda\in (\fa_\ZZ^*)^+$ ($\lambda$ is a dominant integral weight) then the 
element $[T_\lambda]$ has an 
expansion in $H$ as a sum over the double coset $W_0 u W_\nu$, where $\lambda^+ = w_0u\circ \nu$ with $\nu\in A_{-\ell-h}$
and $u$ is minimal length in $W_0 u W_\nu$.  The properties in Proposition \ref{bracketTlambdaproperties} determine 
$[T_\lambda]$ for $\lambda\in \fa_\ZZ^*$ (all integral weights).

\begin{prop} \label{bracketTlambdaproperties} Let $\lambda\in \fa_\ZZ^*$.
Let $\lambda^+$ be the maximal element of $W_0\circ\lambda$
and let $\lambda^-$ be the minimal element of $W_0\circ\lambda$ in dominance order.
Let $u\in W$ and $x\in W_0$ be of minimal length such that 
$$\lambda^- = u\circ\nu
\qquad\hbox{and}\qquad\lambda = x\circ\lambda^+.$$
Then $[T_\lambda] = (-t^{-\frac12})^{\ell(x)}[T_{\lambda^+}]$ and
$$
[T_{\lambda^+}] = \begin{cases}
\varepsilon_0 T_u\mathbf{1}_\nu, &\hbox{if $\langle \lambda^++\rho, \alpha_i^\vee\rangle \ne 0$ for $i\in \{1,\ldots, n\}$,} \\
0, &\hbox{otherwise.}
\end{cases}
$$
\end{prop}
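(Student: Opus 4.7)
The plan is to reduce both assertions to Proposition~\ref{doublecosetrepbasis} applied to $u$, after first establishing that $u$ has minimum length in the full double coset $W_0 u W_\nu$. For this geometric claim I would invoke the alcove picture under the level $(-\ell-h)$ dot action, in which $\ell(v)$ counts the alcove walls crossed between $A_{-\ell-h}$ and $vA_{-\ell-h}$. Since $\lambda^-$ is the antidominant representative of $W_0\circ\lambda^+$ and both $\nu+\rho$ and $\lambda^-+\rho$ are antidominant, the alcove $uA_{-\ell-h}$ (the one containing $\lambda^-$) is separated from $A_{-\ell-h}$ only by affine level hyperplanes, never by a finite-Weyl simple-root hyperplane $\langle\,\cdot\,,\alpha_i^\vee\rangle=0$; any other element of $W_0\circ\lambda^-$ sits in an alcove requiring at least one additional finite-Weyl wall crossing. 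Hence $u$ is of minimum length in $W_0 u W_\nu$, and the standard Bourbaki factorization for minimum-length double coset representatives yields $\ell(wuz)=\ell(w)+\ell(u)+\ell(z)$ for all $w\in W_0$ and $z\in W_\nu$.

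With this in hand, I first handle the dominant case. The element $y^+\in W^\nu$ appearing in $[T_{\lambda^+}] = \varepsilon_0 T_{y^+}\mathbf{1}_\nu$ satisfies $w_0 y^+\circ\nu = \lambda^+ = w_0\circ\lambda^- = (w_0 u)\circ\nu$, hence $y^+ W_\nu = u W_\nu$; as both lie in $W^\nu$, this forces $y^+ = u$ and $[T_{\lambda^+}] = \varepsilon_0 T_u\mathbf{1}_\nu$. Proposition~\ref{doublecosetrepbasis} then applies directly: $\varepsilon_0 T_u\mathbf{1}_\nu$ vanishes iff $W_0 \cap u W_\nu u^{-1}\neq\{1\}$, iff $\mathrm{Stab}_{W_0}(\lambda^-)\neq\{1\}$, iff $\langle \lambda^+ + \rho, \alpha_i^\vee\rangle = 0$ for some $i\in\{1,\ldots,n\}$ (using $\lambda^-+\rho = w_0(\lambda^++\rho)$), and otherwise equals the sum in part~(b) of that proposition. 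For general $\lambda$ I would set $x^* := w_0 x w_0\in W_0$ (so $\ell(x^*)=\ell(x)$); then $\lambda = (xw_0 u)\circ\nu = w_0(x^*u)\circ\nu$, and the factorization from the first paragraph yields $\ell(x^*uz) = \ell(x^*)+\ell(u)+\ell(z)$ for $z\in W_\nu$, so $x^*u\in W^\nu$ and $T_{x^*u} = T_{x^*}T_u$. Using the defining relation $\varepsilon_0 T_{x^*} = (-t^{-\frac12})^{\ell(x)}\varepsilon_0$,
\begin{equation*}
[T_\lambda] = \varepsilon_0 T_{x^*u}\mathbf{1}_\nu = \varepsilon_0 T_{x^*}T_u\mathbf{1}_\nu = (-t^{-\tfrac12})^{\ell(x)}\,\varepsilon_0 T_u\mathbf{1}_\nu = (-t^{-\tfrac12})^{\ell(x)}[T_{\lambda^+}].
\end{equation*}

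The hard part will be the alcove-geometric verification in the first paragraph---the claim that $u$ has minimum length in $W_0 u W_\nu$, and not merely in $u W_\nu$. Membership in $W^\nu$ is immediate from the definition of $u$, but minimality in the left $W_0$-coset rests on the compatibility of the antidominance of $\nu$ inside $A_{-\ell-h}$ with the antidominance of $\lambda^-$ in its $W_0$-orbit, and requires careful tracking of the level $(-\ell-h)$ dot action versus the ordinary action. Once this is in place, the remaining manipulations are a direct application of Proposition~\ref{doublecosetrepbasis} together with the defining relations of $\cP^+_{-\ell-h}$.
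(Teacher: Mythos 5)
Your route is essentially the paper's: you identify $y=(w_0xw_0)u$ as the minimal $W_\nu$-coset representative with $\lambda=w_0y\circ\nu$, peel off the scalar via $\varepsilon_0 T_{w_0xw_0}=(-t^{-\frac12})^{\ell(x)}\varepsilon_0$, and reduce the vanishing to a statement about $\mathrm{Stab}_{W_0}(\lambda^-)$. The one packaging difference is the vanishing step: you cite Proposition~\ref{doublecosetrepbasis}(a) directly, whereas the paper reruns that argument inline (producing $s_j\in W_0$ with $s_ju=us_k$, $s_k\in W_\nu$, and concluding $[T_{\lambda^+}]=-t[T_{\lambda^+}]$). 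These are the same calculation; citing the proposition is marginally cleaner, at the cost of checking that the formal-symbol module $\varepsilon_0H\mathbf{1}_\nu$ of \eqref{Heckemoduledefn} satisfies the same relations as the honest Hecke-algebra version used in \S\ref{singparabolicKLsection}, which it does. Your explicit flagging of the double-coset minimality of $u$ in $W_0uW_\nu$ is appropriate: the paper uses it silently, both in writing $T_{w_0xw_0u}=T_{w_0xw_0}T_u$ and in applying the definition \eqref{bracketTXdefn} with $y=w_0xw_0u\in W^\nu$. Your geometric sketch is the right idea, though it should be phrased more carefully---what one needs is that $\nu+\rho$ and $\lambda^-+\rho$ lie in the same closed $W_0$-chamber (both antidominant), which gives $s_iu>u$ for $i\in\{1,\dots,n\}$, hence $u\in{}^0W$; combined with $u\in W^\nu$ this gives minimality in the double coset, not a claim about which \emph{types} of hyperplanes separate the alcoves $A_{-\ell-h}$ and $uA_{-\ell-h}$.
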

\begin{proof} 
As in \eqref{bracketTXdefn},
let $y\in W^\nu$ be such that $\lambda = w_0y\circ\nu$.
Then
$$\lambda^- = u\circ\nu \qquad\hbox{and}\qquad
\lambda^+ = w_0u\circ\nu\quad\hbox{and}\quad
y=(w_0xw_0)u,
$$ 
since
$\lambda = x\circ\lambda^+ = xw_0u\circ\nu=w_0(w_0xw_0)u\circ\nu$.
Thus, using the definition in \eqref{bracketTXdefn},
$$
[T_{\lambda^+}] = [T_{w_0u\circ\nu}] = \varepsilon_0 T_u \mathbf{1}_\nu,%\qquad\hbox{and}
\qquad\qquad
[T_{\lambda^-}] = [T_{u\circ\nu}] = [T_{w_0(w_0u)\circ\nu}] = \varepsilon_0 T_{m} \mathbf{1}_\nu, 
$$
where $m$ is the minimal length representative of the coset $w_0uW_\nu$ and
\begin{align*}
[T_\lambda] &= [T_{w_0(w_0xw_0)u\circ\nu}] = \varepsilon_0 T_{w_0xw_0u}\mathbf{1}_\nu
=\varepsilon_0 T_{w_0xw_0}T_u\mathbf{1}_\nu \\
&=(-t^{-\frac12})^{\ell(w_0xw_0)} \varepsilon_0 T_u \mathbf{1}_\nu
=(-t^{-\frac12})^{\ell(x)} \varepsilon_0 T_u \mathbf{1}_\nu.
\end{align*}

If $i\in \{1, \ldots, n\}$ and $\langle \lambda^++\rho, \alpha_i^\vee\rangle=0$ then
$s_j\in W_{\lambda^-}$ where $s_j=w_0s_iw_0$.  Since $W_{\lambda^-}=W_{u\circ\nu}
=uW_\nu u^{-1}$, then $s_{u\alpha_j}=u^{-1}s_ju\in W_\nu$.  Since $\nu\in A_{-\ell-h}$
then $u^{-1}s_ju = s_{u\alpha_j}=s_k$ with $k\in \{0, \ldots, n\}$.  Thus $s_ju=us_k$ and
\begin{align*}
[T_{\lambda^+}] 
&= \varepsilon_0 T_u\mathbf{1}_\nu
= (-t^{\frac12})\varepsilon_0 T_{s_j}T_u\mathbf{1}_\nu
= (-t^{\frac12})\varepsilon_0 T_{s_ju}\mathbf{1}_\nu \\
&= (-t^{\frac12})\varepsilon_0 T_{us_k}\mathbf{1}_\nu
= (-t^{\frac12})\varepsilon_0 T_uT_{s_k}\mathbf{1}_\nu 
= (-t^{\frac12})t^{\frac12}\varepsilon_0 T_u\mathbf{1}_\nu
=-t[T_{\lambda^+}],
\end{align*}
so that $[T_{\lambda^+}]=0$.
\end{proof}

\begin{remark}
The following ``straightening laws'' for $[T_\lambda]$ follow from Proposition 
\ref{bracketTlambdaproperties}.
Let $\lambda\in \fa_\ZZ^*$ and let $i\in \{1, \ldots, n\}$.  Then
\begin{equation}
[T_{s_i\circ\lambda}] = \begin{cases}
-t^{\frac12}[T_{\lambda}], &\hbox{if $\langle \lambda+\rho, \alpha_i^\vee\rangle<0$,} \\
0, &\hbox{if $\langle \lambda+\rho, \alpha_i^\vee\rangle=0$.}
\end{cases}
\label{bracketTlambdastraightening}
\end{equation}
\end{remark}

\subsection{The straightening laws for $[X_\lambda]$}
In parallel with the case for $[T_\lambda]$, 
%Just as in the case of Proposition \ref{bracketTlambdaproperties},
the properties in Proposition \ref{bracketXlambdaproperties} determine 
$[X_\lambda]$ for $\lambda\in \fa_\ZZ^*$ (all integral weights) in terms of $[X_{\lambda^+}]$
for $\lambda^+\in (\fa_\ZZ^*)^+$ (dominant integral weights).  Proposition \ref{bracketXlambdaproperties} is the same as \cite[Prop.\ 6.3(ii)]{GH}
(see also \cite[Prop. 5.9]{LT}).

\begin{prop} \label{bracketXlambdaproperties} Let $\lambda\in \fa_\ZZ^*$ 
and let $\lambda^+$ and $\lambda^-$ be the dominant and the antidominant representatives
of $W_0\circ \lambda$, respectively.
\begin{enumerate}
\item[(a)] If $i\in \{1, \ldots, n\}$ and $\langle \lambda+\rho, \alpha_i^\vee\rangle=0$ then $[X_\lambda]=0$.
\item[(b)] If $\langle \lambda+\rho, \alpha_i^\vee\rangle\ne 0$ for $i\in \{1, \ldots, n\}$
then $[X_{\lambda^+}] = [T_{\lambda^+}]$.
\item[(c)] Let $i\in \{1, \ldots, n\}$.  Then % and $\langle \lambda+\rho, \alpha_i^\vee\rangle>0$ then
\begin{equation*}
[X_{s_i\circ\lambda}]=\begin{cases}
-[X_\lambda], &\hbox{if $\langle\lambda+\rho,\alpha_i^\vee\rangle \in \ell\ZZ_{\ge 0}$,} \\
-t^{\frac12}[X_\lambda], &\hbox{if $0< \langle\lambda+\rho,\alpha_i^\vee\rangle < \ell$,} \\
-t^{\frac12}[X_{s_i\circ\lambda^{(1)}}] -[X_{\lambda^{(1)}}] - t^{\frac12}[X_\lambda], 
&\hbox{if $ \langle\lambda+\rho,\alpha_i^\vee\rangle > \ell$ and $\langle\lambda+\rho,\alpha_i^\vee\rangle\not\in \ell\ZZ$,}
\end{cases}
\label{XlambdaStraighteningRelations}
\end{equation*}
where 
$$\lambda^{(1)} = \lambda - j\alpha_i
\qquad\hbox{if $\langle \lambda+\rho,\alpha_i^\vee\rangle =k\ell + j$,}
\quad\hbox{with $k\in \ZZ_{\ge 0}$ and 
$j\in \{1, \ldots, \ell-1\}$.}
$$
\end{enumerate}
\end{prop}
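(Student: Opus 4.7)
The plan is to prove all three parts by explicit computation inside the affine Hecke algebra $H$, using the Bernstein commutation relation
\[
T_{s_j} X^\mu = X^{s_j\mu}T_{s_j} + (t^{\frac12}-t^{-\frac12})\frac{X^\mu - X^{s_j\mu}}{1-X^{-\alpha_j}}
\]
together with the two absorption identities $\varepsilon_0 T_{s_j} = -t^{-\frac12}\varepsilon_0$ for $s_j\in W_0$ and $T_{s_k}\mathbf{1}_\nu = t^{\frac12}\mathbf{1}_\nu$ for $s_k\in W_\nu$. The approach parallels the proof of Proposition \ref{bracketTlambdaproperties}, but with the Bernstein generators $X^v$ in place of the Coxeter generators $T_y$; the Bernstein--Coxeter conversion \eqref{BernsteinCoxeterconversion} is what lets one pass between the two descriptions.

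For part (a), I would write $\lambda = w_0v\circ\nu$ and observe that $\langle\lambda+\rho,\alpha_i^\vee\rangle=0$ forces $v^{-1}s_jv \in W_\nu$ (with $s_j = w_0 s_iw_0$), and that this conjugate can be identified with a simple affine reflection $s_k\in W_\nu$. The Bernstein relation applied to $X^v$ then yields $T_{s_j}X^v = X^v T_{s_k}$ in $H$, the correction term dropping out because its numerator $X^v - X^{s_jv}$ vanishes along the direction fixed by $s_k$. Sandwiching this identity between $\varepsilon_0$ on the left and $\mathbf{1}_\nu$ on the right produces $-t^{-\frac12}[X_\lambda] = t^{\frac12}[X_\lambda]$, so $[X_\lambda]=0$. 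For part (b), when $\lambda^+\in(\fa_\ZZ^*)^+$ is regular dominant, the minimal length element $u\in W^\nu$ with $\lambda^+ = w_0u\circ\nu$ can be taken to be a dominant translation $t_\mu$; the conversion \eqref{BernsteinCoxeterconversion} then gives $X^u = X^{t_\mu} = X^\mu = T_{t_\mu} = T_u$, whence $[X_{\lambda^+}] = \varepsilon_0 X^u\mathbf{1}_\nu = \varepsilon_0 T_u\mathbf{1}_\nu = [T_{\lambda^+}]$.

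For part (c), applying $\varepsilon_0$ on the left of the Bernstein relation and using $\varepsilon_0 T_{s_j}=-t^{-\frac12}\varepsilon_0$ produces a relation inside $\varepsilon_0H$ expressing $\varepsilon_0 X^v$ as a linear combination of $\varepsilon_0 X^{s_jv}$ together with a geometric series of $\varepsilon_0 X^{v'}$ running along the $\alpha_j$-string between $v$ and $s_jv$. After multiplying on the right by $\mathbf{1}_\nu$, the three cases of (c) correspond exactly to the three possible geometric relationships between this $\alpha_j$-string and the walls of the level $(-\ell-h)$ alcove structure: a string sitting on a wall (Case~1), a string entirely inside a single alcove and missing every wall (Case~2), or a string crossing a wall between two alcoves (Case~3). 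In Cases~1 and~2 most of the geometric series either vanishes by part (a) or collapses immediately via the projector identities; in Case~3 the series decomposes into contributions at $\lambda$, at $\lambda^{(1)} = \lambda - j\alpha_i$, and at $s_i\circ\lambda^{(1)}$. The main obstacle is Case~3: the bookkeeping of signs, powers of $t^{\pm\frac12}$, and the telescoping of the $\alpha_j$-string must be organized precisely so as to recover the claimed identity $[X_{s_i\circ\lambda}] = -t^{\frac12}[X_{s_i\circ\lambda^{(1)}}]-[X_{\lambda^{(1)}}]-t^{\frac12}[X_\lambda]$, and this matching of coefficients against the telescoping series is where most of the work is concentrated.
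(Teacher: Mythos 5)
Your plan correctly identifies the main tools (the Bernstein commutation relation, the projector identities $\varepsilon_0 T_{s_j}=-t^{-\frac12}\varepsilon_0$ and $T_{s_k}\mathbf{1}_\nu=t^{\frac12}\mathbf{1}_\nu$, and the Bernstein--Coxeter conversion \eqref{BernsteinCoxeterconversion}), and your part (b) argument matches the paper's in substance. However, the proposed mechanism for part (c) --- hitting $\varepsilon_0 X^v\mathbf{1}_\nu$ with the raw Bernstein relation and reading off a geometric series --- does not produce the claimed straightening law, and this is a genuine gap, not just unfinished bookkeeping.

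Here is the problem. Writing $v=t_\mu w$ so that $[X_\lambda]=\varepsilon_0 X^\mu T_{w^{-1}}^{-1}\mathbf{1}_\nu$, the correction term in $T_{s_k}X^\mu=X^{s_k\mu}T_{s_k}+(t^{\frac12}-t^{-\frac12})\frac{X^\mu-X^{s_k\mu}}{1-X^{-\alpha_k}}$ expands as the finite sum $\sum_{r}X^{\mu-r\alpha_k}$. After multiplying by $T_{w^{-1}}^{-1}\mathbf{1}_\nu$, the summand $\varepsilon_0 X^{\mu-r\alpha_k}T_{w^{-1}}^{-1}\mathbf{1}_\nu$ is $[X_{\lambda'}]$ for $\lambda'=\lambda - r\ell\alpha_i$, i.e.\ the string jumps by \emph{multiples of $\ell\alpha_i$}. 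These are not the terms $\lambda^{(1)}=\lambda-j\alpha_i$ (with $j<\ell$) and $s_i\circ\lambda^{(1)}$ that appear in the third case of (c). If $\langle\lambda+\rho,\alpha_i^\vee\rangle=k\ell+j$ with $k$ large, the Bernstein correction gives roughly $k$ terms at heights $k\ell+j,(k-2)\ell+j,(k-4)\ell+j,\ldots$, none of which (except $r=0$) coincide with $\lambda^{(1)}$, and they do not telescope in any obvious way. To make your route work you would still have to prove, after the $r=0$ term cancels, the residual identity $0 = -t^{-\frac12}[X_{s_i\circ\lambda^{(1)}}]-[X_{\lambda^{(1)}}]+(t^{\frac12}-t^{-\frac12})\sum_{r\geq 1}[X_{\lambda-r\ell\alpha_i}]$, which is essentially as hard as the proposition itself.

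The paper avoids this entirely by introducing two tailor-made four-term ``lifted straightening laws.'' The first, \eqref{straighteningA}, is $\varepsilon_0(t^{\frac12}+T_{s_k}^{-1})(X^{s_k\mu}+X^\mu)T_{w^{-1}}^{-1}=0$, which rests on two observations: the combination $X^{s_k\mu}+X^\mu$ \emph{commutes} with $T_{s_k}$ (the Bernstein correction terms cancel in the sum), and $\varepsilon_0(t^{\frac12}+T_{s_k}^{-1})=0$. The second, \eqref{straighteningB}, comes from the explicit Hecke-algebra identity \eqref{straighteninglifted} which is verified directly with the Bernstein relation. Each already has exactly four $X^\bullet T_\bullet^{-1}$ terms; sandwiching between $\varepsilon_0$ and $\mathbf{1}_\nu$ and identifying the terms with $[X_{s_i\circ\lambda}],[X_{s_i\circ\lambda^{(1)}}],[X_{\lambda^{(1)}}],[X_\lambda]$ is then the whole content of the case analysis (cases 1R, 1L, 2R, 2L, each with a regular and a singular subcase). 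There is no long geometric series to manage.

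Two smaller points. Your proof of (a) attempts a direct argument via $T_{s_j}X^v=X^vT_{s_k}$ with ``the correction term dropping out''; for this you need $s_j\mu=\mu$, which you do not justify and which is not immediate from $\langle\lambda+\rho,\alpha_i^\vee\rangle=0$. The paper's route is much simpler: (a) is an immediate corollary of the first case of (c), since $\langle\lambda+\rho,\alpha_i^\vee\rangle=0$ gives $s_i\circ\lambda=\lambda$ and hence $[X_\lambda]=-[X_\lambda]$. Also your ``three geometric relationships of the $\alpha_j$-string to the walls'' heuristic does not match how the paper actually partitions the argument: the cases there are indexed by whether the reflecting wall sits to the right or to the left of $\lambda$ and by whether $\lambda$ lies on the wall, not by the three formulas in (c).
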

\begin{proof}
Define $[X_\lambda] = \varepsilon_0 X^v \mathbf{1}_\nu$ as in \eqref{bracketTXdefn}
and let $\mu\in \fa^*_\ZZ$ and $w\in W_0$ to write
\begin{equation}
v =t_\mu w.
\qquad\hbox{Then}\qquad
[X_\lambda] = \varepsilon_0 X^v\mathbf{1}_\nu
=\varepsilon_0 X^{t_\mu w} \mathbf{1}_\nu
=\varepsilon_0 X^\mu (T_{w^{-1}})^{-1}\mathbf{1}_\nu.
\label{bracketXmuwform}
\end{equation}
The weight $\lambda$ is the $-\ell w_0\mu$-translate of the element $(w_0w)\circ \nu$ since
\begin{equation}
\lambda = w_0v\circ\nu = w_0t_\mu w\circ\nu 
= t_{w_0\mu}(w_0w)\circ\nu = -\ell w_0\mu+(w_0w)\circ\nu.
\label{dottonondot}
\end{equation}
Keeping $i\in \{1, \ldots, n\}$ as in the statement of (c), let
\begin{equation}
s_k = w_0s_iw_0 \quad\hbox{and}\quad \alpha_k = w_0(\alpha_i).
\end{equation}

\smallskip\noindent
(a) follows from the first case of (c): If $\langle \lambda+\rho, \alpha_i^\vee\rangle = 0$ 
then $s_i\circ\lambda=\lambda$ and 
$[X_\lambda]=[X_{s_i\circ\lambda}]=-[X_\lambda]$, so that $2[X_\lambda]=0$.

\smallskip\noindent
(b) Assume $\langle \lambda+\rho,\alpha_i^\vee\rangle \ne 0$ for all $i\in \{1,\ldots, n\}$. 
Let $u\in W$ be of minimal length such that $\lambda^-=u\circ\nu$.
Then 
$\lambda^+=w_0u\circ\nu$ and, by the definition in \eqref{bracketTXdefn},
$[X_{\lambda^+}] = [X_{w_0u\circ\nu}]= \varepsilon_0 X^u \mathbf{1}_\nu$.
Write $u=t_{\mu^+}x$ with $\mu^+\in \fa_\ZZ^*$
and $x\in W_0$.  By \eqref{dottonondot}, $\mu^+$ is dominant since $\lambda^-$ is in the antidominant chamber,
and \eqref{BernsteinCoxeterconversion} then gives that $X^u=T_u$.  Thus
$$[X_{\lambda^+}] 
=[X_{w_0u\circ\nu}]
= \varepsilon_0 X^u \mathbf{1}_\nu
=\varepsilon_0 T_u\mathbf{1}_\nu
=[T_{w_0u\circ\nu}]
=[T_{\lambda^+}].
$$

\smallskip\noindent
(c)  
The proof depends on the following identities in $H$, which we refer to as 
``lifted straightening laws''.
The equality $0=\varepsilon_0 (t^{\frac{1}{2}}+T_{s_k}^{-1})$ is used to establish the ``right half of the hexagon lifted straightening law'':
If $s_kw>w$ then
\begin{align}
0&=\varepsilon_0 (t^{\frac{1}{2}}+T_{s_k}^{-1})(X^{s_k\mu}+ X^{\mu})(T_{w^{-1}})^{-1}
=\varepsilon_0(X^{s_k\mu}+ X^{\mu})(t^{\frac{1}{2}}+T_{s_k}^{-1})T_{w^{-1}}^{-1}  \nonumber \\
&=\varepsilon_0 (X^{s_k\mu}T_{(s_k w)^{-1}}^{-1}
+t^{\frac{1}{2}} X^{s_k\mu}T_{w^{-1}}^{-1}  
+ X^{\mu}T_{(s_k w)^{-1}}^{-1}
+t^{\frac{1}{2}} X^{\mu}T_{w^{-1}}^{-1}).
\label{straighteningA}\tag{R}
\end{align}
The equality
\begin{equation}
0=T_{s_k}X^{s_k\mu}-X^{s_k\mu+\alpha_k}T_{s_k}^{-1}+T_{s_k}X^{\mu-\alpha_k}-X^{\mu}T_{s_k}^{-1},
\label{straighteninglifted}
\end{equation}
is proved by the computation
\begin{align*}
&T_{s_k}X^{s_k\mu}-X^{s_k\mu+\alpha_k}T_{s_k}^{-1}
+ T_{s_k}X^{\mu-\alpha_k} - X^{\mu}T_{s_k}^{-1} \\
&=T_{s_k}X^{s_k\mu}-X^{s_k\mu+\alpha_k}(T_{s_k}
- (t^{\frac12}-t^{-\frac12})) + T_{s_k}X^{\mu-\alpha_k} - X^{\mu}(T_{s_k}-(t^{\frac12}-t^{-\frac12})) \\
&=(t^{\frac12}-t^{-\frac12})\frac{X^{s_k\mu}-X^\mu}{1-X^{-\alpha_k}}
+ X^{s_k\mu+\alpha_k}(t^{\frac12}-t^{-\frac12})
+ (t^{\frac12}-t^{-\frac12})\frac{X^{\mu-\alpha_k}-X^{s_k\mu+\alpha_k}}{1-X^{-\alpha_k}}
+ X^{\mu}(t^{\frac12}-t^{-\frac12}) \\
&= \frac{(t^{\frac12}-t^{-\frac12})}{1-X^{-\alpha_k}}
\left(X^{s_k\mu}-X^\mu +(1-X^{-\alpha_k})X^\mu + X^{\mu-\alpha_k}-X^{s_k\mu+\alpha_k} 
+ (1-X^{-\alpha_k})X^{s_k\mu+\alpha_k}\right) \\
&=0.
\end{align*}
The identity \eqref{straighteninglifted} is 
the source of the ``left half of the hexagon lifted straightening law'':
If $s_kw>w$ then
\begin{align}
0 &=\varepsilon_0 (T_{s_k}X^{s_k\mu}-X^{s_k\mu+\alpha_k}T_{s_k}^{-1}+T_{s_k}X^{\mu-\alpha_k}-X^{\mu}T_{s_k}^{-1}) T_{w^{-1}}^{-1} 
\nonumber \\
&=\varepsilon_0(-t^{-\frac{1}{2}}X^{s_k\mu}T_{w^{-1}}^{-1} -X^{s_k\mu+\alpha_k}T_{(s_kw)^{-1}}^{-1}
- t^{-\frac{1}{2}} X^{\mu-\alpha_k}T_{w^{-1}}^{-1} - X^\mu T_{(s_kw)^{-1}}^{-1}).
\label{straighteningB}\tag{L}
\end{align}

%\begin{align*}
%\includegraphics{rightsideofhexagon} \\
%\hbox{Figure for Case 3a: 
%$0 < \langle \ell \mu+\rho, \alpha_i^\vee\rangle 
%< \langle \lambda+\rho, \alpha_i^\vee\rangle
%< \langle \ell \mu+\rho, \alpha_i^\vee\rangle +\ell.$}
%\qquad\qquad
%\end{align*}
%WE HAVE TO TWIST EVERYTHING BY $w_0$. LOOK AT THE $\mathfrak{sl}_3$ EXAMPLES!!!

\smallskip\noindent
\emph{Case 1R:}
$0 
 \leq \langle \ell (-w_0\mu), \alpha_i^\vee\rangle -\ell
< \langle \ell (-w_0\mu), \alpha_i^\vee\rangle 
\le \langle \lambda+\rho, \alpha_i^\vee\rangle
< \langle \ell (-w_0\mu), \alpha_i^\vee\rangle +\ell.$

First assume that $\langle \ell(-w_0\mu), \alpha_i^\vee\rangle -\ell
< \langle \ell(-w_0\mu), \alpha_i^\vee\rangle < \langle \lambda+\rho, \alpha_i^\vee\rangle
< \langle \ell (-w_0\mu), \alpha_i^\vee\rangle +\ell$. 
%Examples of this case are \eqref{sl2rtregXstraightening} and \eqref{sl3rtregXstraightening}.
Then (see the upper picture for Case 1R)
$$
\begin{array}{rlrl}
\ [X_{s_i\circ\lambda}]&=\varepsilon_0 X^{s_k\mu}T_{(s_kw)^{-1}}^{-1}\mathbf{1}_\nu, \qquad
&\ [X_\lambda] &= \varepsilon_0 X^\mu T_{w^{-1}}^{-1}\mathbf{1}_\nu,
\\ \\
\ [X_{s_i\circ\lambda^{(1)}}] &= \varepsilon_0 X^{s_k\mu}T_{w^{-1}}^{-1}\mathbf{1}_\nu,
&\ [X_{\lambda^{(1)}}] &= \varepsilon_0 X^\mu T_{(s_kw)^{-1}}^{-1}\mathbf{1}_\nu. 
\end{array}
$$
Since
$$
\langle w\circ\nu + \rho, \alpha_k^\vee\rangle
=\langle w_0w\circ\nu + \rho, \alpha_i^\vee\rangle
=\langle (\lambda-\ell(-w_0\mu))+\rho, \alpha_i^\vee\rangle >0$$ 
then $s_kw>w$ and so equation \eqref{straighteningA} gives 
\begin{align}
0
%&=\varepsilon_0 (t^{\frac{1}{2}}+T_{s_k}^{-1})(X^{s_k\mu}+ X^{\mu})T_{w^{-1}}^{-1} \mathbf{1}_\nu 
%=\varepsilon_0(X^{s_k\mu}+ X^{\mu})(t^{\frac{1}{2}}+T_{s_k}^{-1})T_{w^{-1}}^{-1}\mathbf{1}_\nu  \\
&=\varepsilon_0 (X^{s_k\mu}T_{(s_k w)^{-1}}^{-1}
+t^{\frac{1}{2}} X^{s_k\mu}T_{w^{-1}}^{-1}  
+ X^{\mu}T_{(s_k w)^{-1}}^{-1}
+t^{\frac{1}{2}} X^{\mu}T_{w^{-1}}^{-1})\mathbf{1}_\nu \nonumber \\
%&=\varepsilon_0 X^{s_k\mu}T_{(s_k w)^{-1}}^{-1}\mathbf{1}_\nu
%+t^{\frac{1}{2}} \varepsilon_0 X^{s_k\mu}T_{w^{-1}}^{-1} \mathbf{1}_\nu 
%+ \varepsilon_0 X^{\mu}T_{(s_k w)^{-1}}^{-1} \mathbf{1}_\nu
%+t^{\frac{1}{2}}\varepsilon_0  X^{\mu}T_{w^{-1}}^{-1}\mathbf{1}_\nu \\
&=[X_{s_i\circ\lambda}]+t^{\frac{1}{2}}[X_{s_i\circ\lambda^{(1)}}]+[X_{\lambda^{(1)}}]+t^{\frac{1}{2}}[X_{\lambda}].
\tag{1Rreg}\label{3areg}
\end{align}

In the limiting case $\langle \ell(-w_0\mu), \alpha_i^\vee\rangle - \ell < \langle \ell (-w_0\mu), \alpha_i^\vee\rangle 
=\langle \lambda+\rho, \alpha_i^\vee\rangle
< \langle \ell (-w_0\mu), \alpha_i^\vee\rangle +\ell$,  
%Example of this case are \eqref{sl3midXstraightening} and \eqref{sl2midXstraightening}.
then (see the lower picture for Case 1R)
\begin{equation}
[X_{s_i\circ\lambda}] =\varepsilon_0 X^{s_k \mu} T_{w^{-1}}^{-1}\mathbf{1}_\nu,
\qquad\hbox{and}\qquad
[X_\lambda] = \varepsilon_0 X^\mu T_{w^{-1}}^{-1}\mathbf{1}_\nu
\tag{cen}\label{cen}
\end{equation}
%This case behaves as if $\lambda$ is in the right half of the hexagon (Case 3a), but then approaches the wall and hits it. So $\lambda$ is in the center of the hexagon.
Since
$$
\langle w\circ\nu + \rho, \alpha_k^\vee\rangle
=\langle w_0w\circ\nu + \rho, \alpha_i^\vee\rangle
=\langle (\lambda-\ell(-w_0\mu))+\rho, \alpha_i^\vee\rangle =0$$ 
then $s_k\in W_{w\circ\nu}$ and $w^{-1}s_k w\in W_\nu$.  
Let
$$%s_k=w_0s_iw_0,\qquad 
s_j=w^{-1}s_kw\in W_\nu
\qquad\hbox{and}\qquad
x=s_kw=ws_j,
$$
so that $s_kx>x$ and $xs_j>x$ and  
$$X^\mu T_{(s_k w)^{-1}}^{-1}T_{s_j}^{-1} = X^\mu T_{w^{-1}}^{-1}
\qquad\hbox{and}\qquad
X^{s_k\mu} T_{(s_kw)^{-1}}^{-1}T_{s_j}^{-1} = X^{s_k\mu} T_{w^{-1}}^{-1}.
$$
Since $s_kx>x$ then equation \eqref{straighteningA} gives 
\begin{align}
0
%&=\varepsilon_0 (t^{\frac{1}{2}}+T_{s_k}^{-1})(X^{s_k\mu}+ X^{\mu})T_{w^{-1}}^{-1} \mathbf{1}_\nu 
%=\varepsilon_0(X^{s_k\mu}+ X^{\mu})(t^{\frac{1}{2}}+T_{s_k}^{-1})T_{w^{-1}}^{-1}\mathbf{1}_\nu  \\
&=\varepsilon_0 (X^{s_k\mu}T_{(s_k x)^{-1}}^{-1}
+t^{\frac{1}{2}} X^{s_k\mu}T_{x^{-1}}^{-1}  
+ X^{\mu}T_{(s_k x)^{-1}}^{-1}
+t^{\frac{1}{2}} X^{\mu}T_{x^{-1}}^{-1})\mathbf{1}_\nu \nonumber \\
&=\varepsilon_0 (X^{s_k\mu}T_{(s_k x)^{-1}}^{-1}
+t X^{s_k\mu}T_{x^{-1}}^{-1}  T_{s_j}^{-1}
+ X^{\mu}T_{(s_k x)^{-1}}^{-1}
+t X^{\mu}T_{x^{-1}}^{-1}T_{s_j}^{-1})\mathbf{1}_\nu \nonumber \\
&=\varepsilon_0 (X^{s_k\mu}T_{(s_k x)^{-1}}^{-1}
+t X^{s_k\mu}T_{(xs_j)^{-1}}^{-1}  
+ X^{\mu}T_{(s_k x)^{-1}}^{-1}
+t X^{\mu}T_{(x s_j)^{-1}}^{-1})\mathbf{1}_\nu \nonumber \\
&=\varepsilon_0 (X^{s_k\mu}T_{w^{-1}}^{-1}
+t X^{s_k\mu}T_{w^{-1}}^{-1}  
+ X^{\mu}T_{w^{-1}}^{-1}
+t X^{\mu}T_{w^{-1}}^{-1})\mathbf{1}_\nu \nonumber \\
%&=\varepsilon_0 X^{s_k\mu}T_{(s_k w)^{-1}}^{-1}\mathbf{1}_\nu
%+t^{\frac{1}{2}} \varepsilon_0 X^{s_k\mu}T_{w^{-1}}^{-1} \mathbf{1}_\nu 
%+ \varepsilon_0 X^{\mu}T_{(s_k w)^{-1}}^{-1} \mathbf{1}_\nu
%+t^{\frac{1}{2}}\varepsilon_0  X^{\mu}T_{w^{-1}}^{-1}\mathbf{1}_\nu \\
&=(1+t)([X_{s_i\circ\lambda}]+[X_{\lambda}]).
\tag{1Rsing}\label{3asing}
\end{align}

%\begin{align*}
%\includegraphics{leftsideofhexagon} \\
%\hbox{Figure for Case 3b: 
%$0<\langle \ell\mu+\rho, \alpha_i^\vee\rangle -\ell
%< \langle \lambda+\rho, \alpha_i^\vee\rangle
%< \langle \ell\mu+\rho, \alpha_i^\vee\rangle.$}
%\qquad\qquad
%\end{align*}

\smallskip\noindent
\emph{Case 1L}: $0\le \langle \ell(-w_0\mu), \alpha_i^\vee\rangle-\ell
\le \langle \lambda+\rho, \alpha_i^\vee\rangle
< \langle \ell (-w_0\mu), \alpha_i^\vee\rangle 
< \langle \ell (-w_0\mu), \alpha_i^\vee\rangle +\ell.$

First assume that $\langle \ell (-w_0\mu), \alpha_i^\vee\rangle-\ell
< \langle \lambda+\rho, \alpha_i^\vee\rangle
< \langle \ell (-w_0\mu), \alpha_i^\vee\rangle< \langle \ell (-w_0\mu), \alpha_i^\vee\rangle+\ell$.
%An example of this case is \eqref{sl2leftregXstraightening}.
With $x=s_kw$,
(see the upper picture for Case 1L)
$$
\begin{array}{rlrl}
\ [X_{s_i\circ\lambda^{(1)}}] &= \varepsilon_0 X^{s_k\mu+\alpha_k}T_{(s_kx)^{-1}}^{-1}\mathbf{1}_\nu, 
&\ [X_{\lambda^{(1)}}] &= \varepsilon_0 X^{\mu-\alpha_k} T_{x^{-1}}^{-1}\mathbf{1}_\nu,
\\
\ [X_{s_i\circ\lambda}] &=\varepsilon_0 X^{s_k\mu}T_{x^{-1}}^{-1}\mathbf{1}_\nu, \qquad
&\ [X_\lambda] &= \varepsilon_0 X^\mu T_{(s_kx)^{-1}}^{-1}\mathbf{1}_\nu.
\end{array}
$$
Since
$$
\langle w\circ\nu + \rho, \alpha_k^\vee\rangle
=\langle w_0w\circ\nu + \rho, \alpha_i^\vee\rangle
=\langle (\lambda-\ell(-w_0\mu))+\rho, \alpha_i^\vee\rangle <0$$ 
then $s_kw<w$ and so $x<s_kx$.  
Then equation \eqref{straighteningB} gives 
\begin{align}
0 
%&=\varepsilon_0 (T_{s_k}X^{s_k\mu}-X^{s_k\mu+\alpha_k}T_{s_k}^{-1}+T_{s_k}X^{\mu-\alpha_k}-X^{\mu}T_{s_k}^{-1}) T_{x^{-1}}^{-1}\mathbf{1}_\nu  \\
&=\varepsilon_0(-t^{-\frac{1}{2}}X^{s_k\mu}T_{x^{-1}}^{-1} -X^{s_k\mu+\alpha_k}T_{(s_kx)^{-1}}^{-1}
- t^{-\frac{1}{2}} X^{\mu-\alpha_k}T_{x^{-1}}^{-1} - X^\mu T_{(s_kx)^{-1}}^{-1})\mathbf{1}_\nu 
\nonumber \\
%&=-t^{-\frac{1}{2}}(\varepsilon_0 X^{s_k\mu}T_{x^{-1}}^{-1} \mathbf{1}_\nu +t^{\frac12} \varepsilon_0 X^{s_k\mu+\alpha_k}T_{(s_kx)^{-1}}^{-1} \mathbf{1}_\nu
%+ \varepsilon_0 X^{\mu-\alpha_k}T_{x^{-1}}^{-1} \mathbf{1}_\nu +t^{\frac12} \varepsilon_0 X^\mu T_{(s_kx)^{-1}}^{-1} \mathbf{1}_\nu )\\
&= - t^{-\frac{1}{2}}([X_{s_i\circ\lambda}] + t^{\frac12}[X_{s_i\circ\lambda^{(1)}}] 
+ [X_{\lambda^{(1)}}] +t^{\frac12} [X_{\lambda}]).
\tag{1Lreg}\label{3breg}
\end{align}

%This case behaves as if $\lambda$ is in the left half of the hexagon (Case 3b), but then approaches the wall and hits it.
In the limiting case
$\langle \ell(-w_0\gamma), \alpha_i^\vee\rangle - \ell = \langle \lambda+\rho, \alpha_i^\vee\rangle
< \langle \ell(-w_0\gamma), \alpha_i^\vee\rangle < \langle \ell(-w_0\gamma), \alpha_i^\vee\rangle + \ell$
with
\begin{equation}
\gamma = \mu+\alpha_k,
\tag{bdy}\label{bdy}
\end{equation}
then (see the lower picture for Case 1L)
\begin{align*}
[X_{s_i\circ \lambda}] 
&= \varepsilon_0 X^{s_k\mu-\alpha_k}T_{w^{-1}}^{-1} \mathbf{1}_\nu 
= \varepsilon_0 X^{s_k\gamma}T_{w^{-1}}^{-1} \mathbf{1}_\nu 
\qquad\hbox{and} \\
[X_\lambda] 
&= \varepsilon_0 X^{\mu}T_{w^{-1}}^{-1} \mathbf{1}_\nu
= \varepsilon_0 X^{\gamma-\alpha_k}T_{w^{-1}}^{-1} \mathbf{1}_\nu.
\end{align*}
%Examples of this case are \eqref{sl3bdyXstraightening} and \eqref{sl2bdyXstraightening}.
Since
$$
\langle w\circ\nu + \rho, \alpha_k^\vee\rangle
=\langle w_0w\circ\nu + \rho, \alpha_i^\vee\rangle
=\langle (\lambda-\ell(-w_0\mu))+\rho, \alpha_i^\vee\rangle >0$$ 
then $s_kw>w$.
Since
$$
\langle w\circ\nu + \rho, \alpha_k^\vee\rangle-\ell
=\langle w_0w\circ\nu + \rho, \alpha_i^\vee\rangle-\ell
=\langle (\lambda-\ell(-w_0\mu))+\rho, \alpha_i^\vee\rangle-\ell =0$$ 
then $s_{-\alpha_k+\delta}\in W_{w\circ\nu}$ and 
$s_0=ws_{-\alpha_k+\delta}w^{-1}=s_{w(-\alpha_k+\delta)}\in W_\nu$.
%The point here is that
%$$\lambda = w_0v\circ\nu = w_0 t_{\gamma-\alpha_k} w\circ\nu
%\quad\hbox{and}\quad
%\lambda = w_0t_\gamma (s_kw)\circ\nu
%\quad\hbox{with}\quad w_0 t_\gamma (s_kw)=w_0 t_{\gamma-\alpha_k}w s_0,
%$$
Then
$$X^{s_k\gamma+\alpha_k}T_{(s_kw)^{-1}}^{-1} = X^{s_k\gamma}T_{w^{-1}}^{-1}T_{s_0}
\qquad\hbox{and}\qquad
X^{\gamma} T_{(s_kw)^{-1}}^{-1} = X^{\gamma-\alpha_k} T_{w^{-1}}^{-1}T_{s_0}.
$$
and equation \eqref{straighteningB} gives
\begin{align}
0 
&=\varepsilon_0(-t^{-\frac{1}{2}}X^{s_k\gamma}T_{w^{-1}}^{-1} -X^{s_k\gamma+\alpha_k}T_{(s_kw)^{-1}}^{-1}
- t^{-\frac{1}{2}} X^{\gamma-\alpha_k}T_{w^{-1}}^{-1} - X^\gamma T_{(s_kw)^{-1}}^{-1})\mathbf{1}_\nu \nonumber \\
&=\varepsilon_0(-t^{-\frac{1}{2}}X^{s_k\gamma}T_{w^{-1}}^{-1} -X^{s_k\gamma}T_{w^{-1}}^{-1}T_{s_0}
- t^{-\frac{1}{2}} X^{\gamma-\alpha_k}T_{w^{-1}}^{-1} - X^{\gamma-\alpha_k} T_{w^{-1}}^{-1}T_{s_0})\mathbf{1}_\nu \nonumber \\
&=\varepsilon_0(-t^{-\frac{1}{2}}X^{s_k\gamma}T_{w^{-1}}^{-1} -t^{\frac12}X^{s_k\gamma}T_{w^{-1}}^{-1}
- t^{-\frac{1}{2}} X^{\gamma-\alpha_k}T_{w^{-1}}^{-1} - t^{\frac12}X^{\gamma-\alpha_k} T_{w^{-1}}^{-1})\mathbf{1}_\nu \nonumber \\
&= -(t^{-\frac{1}{2}}+t^{\frac12})([X_{s_i\circ\lambda}] + [X_{\lambda}]).
\tag{1Lsing}\label{3bsing}
\end{align}

\smallskip\noindent
\emph{Case 2R}: $0=\langle \ell(-w_0\mu), \alpha_i^\vee\rangle$ and 
$0=\langle \ell(-w_0\mu), \alpha_i^\vee\rangle \le \langle \lambda+\rho, \alpha_i^\vee\rangle 
< \ell= \langle \ell(-w_0\mu), \alpha_i^\vee\rangle+\ell.$
%CAN WE PUT A $\le$ SOMEWHERE IN THIS TO COVER THE CASE ON THE WALL?
This case is really a special case of Case 1R, with
$$s_k\mu = \mu,
\qquad \hbox{since}\quad
0=\langle \ell(-w_0\mu), \alpha_i^\vee\rangle=\ell\langle -\mu, \alpha_k^\vee\rangle.
$$
In the case that $0< \langle \lambda+\rho, \alpha_i^\vee\rangle<\ell$
%(an example of this case is \eqref{sl2rgtstripXstraightening})  
then (see the top picture in Case 2R)
$$
[X_{s_i\circ\lambda}] 
%= \varepsilon_0 X^{s_k\mu} T_{(s_kw)^{-1}}^{-1}\mathbf{1}_\nu
= \varepsilon_0 X^{\mu} T_{(s_kw)^{-1}}^{-1}\mathbf{1}_\nu
\qquad\hbox{and}\qquad
[X_\lambda] = \varepsilon_0 X^\mu T_{w^{-1}}\mathbf{1}_\nu
$$
and \eqref{3areg} becomes
%Since
%$$
%\langle w\circ\nu + \rho, \alpha_k^\vee\rangle
%=\langle w_0w\circ\nu + \rho, \alpha_i^\vee\rangle
%=\langle (\lambda-\ell(-w_0\mu))+\rho, \alpha_i^\vee\rangle >0$$ 
%then $s_kw>w$.
%Using $s_k\mu=\mu$ the identity \eqref{straighteningA} gives
\begin{align}
0
&=\varepsilon_0 (X^{s_k\mu}T_{(s_k w)^{-1}}^{-1}
+t^{\frac{1}{2}} X^{s_k\mu}T_{w^{-1}}^{-1}  
+ X^{\mu}T_{(s_k w)^{-1}}^{-1}
+t^{\frac{1}{2}} X^{\mu}T_{w^{-1}}^{-1})\mathbf{1}_\nu \nonumber \\
%&=\varepsilon_0 (t^{\frac{1}{2}}+T_{s_k}^{-1})(X^{s_k\mu}+ X^{\mu})T_{w^{-1}}^{-1} \mathbf{1}_\nu 
%=\varepsilon_0(X^{s_k\mu}+ X^{\mu})(t^{\frac{1}{2}}+T_{s_k}^{-1})T_{w^{-1}}^{-1}\mathbf{1}_\nu \\
&=[X_{s_i\circ\lambda}]+t^{\frac12}[X_\lambda]+[X_{s_i\circ\lambda}]+t^{\frac12}[X_\lambda] 
%&=2\varepsilon_0X^{\mu}(t^{\frac{1}{2}}+T_{s_k}^{-1})T_{w^{-1}}^{-1}\mathbf{1}_\nu  
= 2(t^{\frac{1}{2}}[X_{\lambda}]+[X_{s_i\circ\lambda}]).
\tag{2Rreg}\label{2areg}
\end{align}
For the limiting case where $0=\langle \lambda+\rho, \alpha_i^\vee\rangle <\ell$ (this is analogous to \eqref{cen})
$$[X_\lambda]=[X_{s_i\circ\lambda}]=\varepsilon_0 X^\mu T^{-1}_{w^{-1}} \mathbf{1}_\nu.$$
and \eqref{3asing} becomes
%Since
%$$
%\langle w\circ\nu + \rho, \alpha_k^\vee\rangle
%=\langle w_0w\circ\nu + \rho, \alpha_i^\vee\rangle
%=\langle (\lambda-\ell(-w_0\mu))+\rho, \alpha_i^\vee\rangle =0$$ 
%then $s_k\in W_{w\circ\nu}$ and $w^{-1}s_k w\in W_\nu$.  
%Let
%$$s_k=w_0s_iw_0,\qquad 
%s_j=w^{-1}s_kw\in W_\nu
%\qquad\hbox{and}\qquad
%x=s_kw=ws_j,
%$$
%so that $s_kx>x$ and $xs_j>x$ and  
%$$X^\mu T_{(s_k w)^{-1}}^{-1}T_{s_j}^{-1} = X^\mu T_{w^{-1}}^{-1}
%\qquad\hbox{and}\qquad
%X^{s_k\mu} T_{(s_kw)^{-1}}^{-1}T_{s_j}^{-1} = X^{s_k\mu} T_{w^{-1}}^{-1}.
%$$
%Noting that $s_kx>x$ then \eqref{straighteningA} gives 
\begin{align}
0
%&=\varepsilon_0 (X^{s_k\mu}T_{(s_k x)^{-1}}^{-1}
%+t^{\frac{1}{2}} X^{s_k\mu}T_{x^{-1}}^{-1}  
%+ X^{\mu}T_{(s_k x)^{-1}}^{-1}
%+t^{\frac{1}{2}} X^{\mu}T_{x^{-1}}^{-1})\mathbf{1}_\nu \\
%&=\varepsilon_0 (X^{s_k\mu}T_{(s_k x)^{-1}}^{-1}
%+t X^{s_k\mu}T_{x^{-1}}^{-1}  T_{s_j}^{-1}
%+ X^{\mu}T_{(s_k x)^{-1}}^{-1}
%+t X^{\mu}T_{x^{-1}}^{-1}T_{s_j}^{-1})\mathbf{1}_\nu \\
%&=\varepsilon_0 (X^{s_k\mu}T_{(s_k x)^{-1}}^{-1}
%+t X^{s_k\mu}T_{(xs_j)^{-1}}^{-1}  
%+ X^{\mu}T_{(s_k x)^{-1}}^{-1}
%+t X^{\mu}T_{(x s_j)^{-1}}^{-1})\mathbf{1}_\nu \\
&=\varepsilon_0 (X^{s_k\mu}T_{w^{-1}}^{-1}
+t X^{s_k\mu}T_{w^{-1}}^{-1}  
+ X^{\mu}T_{w^{-1}}^{-1}
+t X^{\mu}T_{w^{-1}}^{-1})\mathbf{1}_\nu  % \\
%&=\varepsilon_0 X^{s_k\mu}T_{(s_k w)^{-1}}^{-1}\mathbf{1}_\nu
%+t^{\frac{1}{2}} \varepsilon_0 X^{s_k\mu}T_{w^{-1}}^{-1} \mathbf{1}_\nu 
%+ \varepsilon_0 X^{\mu}T_{(s_k w)^{-1}}^{-1} \mathbf{1}_\nu
%+t^{\frac{1}{2}}\varepsilon_0  X^{\mu}T_{w^{-1}}^{-1}\mathbf{1}_\nu \\
=(1+t)2[X_{\lambda}].
\tag{2Rsing}\label{2asing}
\end{align}

\smallskip\noindent
\emph{Case 2L}: $\ell=\langle \ell(-w_0\mu), \alpha_i^\vee\rangle$ and
$0=\langle \ell(-w_0\mu), \alpha_i^\vee\rangle - \ell
<\langle \lambda+\rho, \alpha_i\rangle < \langle \ell(-w_0\mu), \alpha_i^\vee\rangle=\ell$.
%CAN WE PUT A $\le$ SOMEWHERE IN THIS TO COVER THE CASE ON THE WALL?
This case is really a special case of Case 1L, with
$$s_k\mu = \mu-\alpha_k,
\qquad \hbox{since}\quad
1=\hbox{$\frac{1}{\ell}$}\ell 
= \hbox{$\frac{1}{\ell}$}\langle \ell(-w_0\mu), \alpha_i^\vee\rangle
=\langle -\mu, \alpha_k^\vee\rangle.
$$
In the case that $0< \langle \lambda+\rho, \alpha_i^\vee\rangle<\ell$
%(an example of this case is \eqref{sl3stripbdyXstraightening})
then 
(see the bottom picture in Case 2L)
$$
[X_{s_i\circ\lambda}] = \varepsilon_0 X^{\mu-\alpha_k}T_{x^{-1}}^{-1}\mathbf{1}_\nu
\qquad\hbox{and}\qquad
[X_\lambda] = \varepsilon_0 X^\mu T_{(s_kx)^{-1}}^{-1} \mathbf{1}_\nu.
$$
and \eqref{3breg} becomes
%Since
%$$
%\langle w\circ\nu + \rho, \alpha_k^\vee\rangle
%=\langle w_0w\circ\nu + \rho, \alpha_i^\vee\rangle
%=\langle (\lambda-\ell(-w_0\mu))+\rho, \alpha_i^\vee\rangle <0$$ 
%then $s_kw<w$ and so $x<s_kx$.  
%Using $\mu-\alpha_k=s_k\mu$ the identity \eqref{straighteningB} gives 
\begin{align}
0 
%&=\varepsilon_0 (T_{s_k}X^{s_k\mu}-X^{s_k\mu+\alpha_k}T_{s_k}^{-1}+T_{s_k}X^{\mu-\alpha_k}-X^{\mu}T_{s_k}^{-1}) T_{x^{-1}}^{-1}\mathbf{1}_\nu  \\
&=\varepsilon_0(-t^{-\frac{1}{2}}X^{s_k\mu}T_{x^{-1}}^{-1} 
-X^{s_k\mu+\alpha_k}T_{(s_kx)^{-1}}^{-1}
- t^{-\frac{1}{2}} X^{\mu-\alpha_k}T_{x^{-1}}^{-1} - X^\mu T_{(s_kx)^{-1}}^{-1})\mathbf{1}_\nu 
\nonumber \\
&=-t^{-\frac{1}{2}}[X_{s_i\circ\lambda}] -[X_\lambda] - t^{-\frac{1}{2}} [X_{s_i\circ\lambda}] - [X_\lambda] 
%&=-2(t^{-\frac{1}{2}}\varepsilon_0 X^{s_k\mu}T_{x^{-1}}^{-1} \mathbf{1}_\nu + \varepsilon_0 X^\mu T_{(s_kx)^{-1}}^{-1}\mathbf{1}_\nu)\\
= -2t^{-\frac{1}{2}}([X_{s_i\circ\lambda}]+t^{\frac12}[X_{\lambda}]).
\tag{2Lreg}\label{2breg}
\end{align}
For the limiting case where $0=\langle \lambda+\rho, \alpha_i^\vee\rangle<\ell$ (this is analogous to \eqref{bdy})
$$
[X_\lambda] = [X_{s_i\circ\lambda}] = \varepsilon_0 X^\mu T_{(s_kx)^{-1}}^{-1} \mathbf{1}_\nu
=\varepsilon_0X^{\gamma-\alpha_k}T_{w^{-1}}^{-1}\mathbf{1}_\nu
=\varepsilon_0X^{s_k\gamma}T_{w^{-1}}^{-1}
$$
and \eqref{3bsing} becomes
\begin{align}
0 
&=\varepsilon_0(-t^{-\frac{1}{2}}X^{s_k\gamma}T_{w^{-1}}^{-1} -t^{\frac12}X^{s_k\gamma}T_{w^{-1}}^{-1}
- t^{-\frac{1}{2}} X^{\gamma-\alpha_k}T_{w^{-1}}^{-1} - t^{\frac12}X^{\gamma-\alpha_k} T_{w^{-1}}^{-1})\mathbf{1}_\nu \nonumber \\
&= -(t^{-\frac{1}{2}}+t^{\frac12})2[X_{\lambda}].
\tag{2Lsing}\label{2bsing}
\end{align}

Together these computations complete the proof of part (c):  the third case follows from
\eqref{3areg} and \eqref{3breg}, the second case from \eqref{2areg} and
\eqref{2breg}, and the first case from \eqref{3asing} and \eqref{3bsing},
with \eqref{2asing} and \eqref{2bsing} specifically treating the statement in (a).
\end{proof}

\begin{remark}  If $\lambda \in \ell\fa^*_\ZZ - \rho$ then there is a unique $\mu\in \fa^*_\ZZ$ such that
$$\lambda = \ell w_0\mu - \rho = t_{w_0\mu}\circ (-\rho)=t_{w_0\mu}w_0\circ(-\rho)
=w_0t_{\mu}\circ (-\rho),$$
so that $\lambda=w_0v\circ \nu$ with $\nu=-\rho$ and $v=t_\mu$.
Since $\nu = -\rho$ then $\mathbf{1}_\nu = \mathbf{1}_0$ with
$T_{s_i}\mathbf{1}_0=t^{\frac12}\mathbf{1}_0$ for $i\in \{1, \ldots, n\}$.
Thus,
$$[T_\lambda] = \varepsilon_0 T_{t_\mu} \mathbf{1}_0
\qquad\hbox{and}\qquad
[X_\lambda] = \varepsilon_0 X^\mu \mathbf{1}_0.$$
so that the $[X_\lambda]$, for $\lambda \in \ell\fa^*_\ZZ-\rho$, are the 
elements $A_\mu$ studied in \cite[\S 2]{NR}.  In this case
the first case of Proposition \ref{bracketXlambdaproperties}(c) is the 
straightening law and this 
coincides with the equality  
$A_{s_i\mu}=-A_{\mu}$ proved in \cite[Prop. 2.1]{NR}. 
\end{remark}

%\subsection{Computation of $\overline{[X_\lambda]}$}  
\begin{remark}  Following the definition of $[X_\lambda]$ in \eqref{bracketTXdefn},
$$\hbox{if}\quad \lambda = w_0v\circ\nu
\qquad\hbox{then}\quad
w_0\circ\lambda = w_0(w_0v)\circ\nu = w_0(w_0v w_\nu)\circ\nu$$
and we have $[X_\lambda] = \varepsilon_0X^v \mathbf{1}_\nu$ and $[X_{w_0\circ\lambda}]
=\varepsilon_0 X^{w_0vw_\nu} \mathbf{1}_\nu$.  With $X^v = X^{t_\mu w}$ then
\begin{align*}
\overline{X^v} = \overline{X^{t_{\mu}w}}&=\overline{X^\mu (T_{w^{-1}}^{-1})}=\overline{X^{\mu}}T_w=T_{w_0} X^{w_0\mu}T_{w_0}^{-1}T_w=T_{w_0}X^{w_0\mu} T_{w^{-1}w_0}^{-1}\\
&=T_{w_0}X^{t_{w_0\mu}(w_0w)}=T_{w_0}X^{w_0v}=T_{w_0}X^{w_0v w_\nu}T_{w_\nu}
\end{align*}
By the previous computation,  $X^{w_0vw_\nu}= T_{w_0}^{-1} \overline{X^v} T_{w_\nu}^{-1}$, so that
\begin{align*}
[X_{w_0\circ\lambda}]
&=\varepsilon_0 X^{w_0vw_\nu} \mathbf{1}_\nu
=\varepsilon_0 T_{w_0}^{-1} \overline{X^v} T_{w_\nu}^{-1} \mathbf{1}_\nu 
=(-t^{-\frac12})^{-\ell(w_0)}(t^{\frac12})^{-\ell(w_\nu)}
\varepsilon_0 \overline{X^v} \mathbf{1}_\nu \\
&=(-1)^{\ell(w_0)}(t^{-\frac12})^{-\ell(w_0)+\ell(w_\nu)}
\overline{\varepsilon_0 X^v \mathbf{1}_\nu}
=(-1)^{\ell(w_0)}(t^{-\frac12})^{-\ell(w_0)+\ell(w_\nu)}
\overline{[X_\lambda]}.
\end{align*}
Hence %CHECK THE SIGN HERE
\begin{equation}
\overline{[X_\lambda]}
=(-1)^{\ell(w_0)}(t^{-\frac12})^{\ell(w_0)-\ell(w_\nu)}[X_{w_0\circ\lambda}].
\label{barbracketXlambda}
\end{equation}
\end{remark}

\subsection{Relating the KL-modules $\cP^+_{-\ell-h}$ and $\cF_\ell$}

In this subsection we tie together our components: the module with bar involution $\cP^+_{-\ell-h}$ from \eqref{Heckemoduledefn} which was built from the affine Hecke algebra and the abstract Fock space $\cF_\ell$
from \eqref{Fstraightening}.  Because of the way that we arrived at $\cP^+_{-\ell-h}$ from representation theory
(see (QG) at the end of Section 3) the isomorphism between $\cP^+_{-\ell-h}$ and $\cF_\ell$
will allow us to prove that the abstract Fock space $\cF_\ell$ captures decomposition numbers of Weyl 
modules for quantum groups at roots of unity.

\begin{thm} \label{abstracttoHecke} Let $\le$ be the dominance order on the set $(\fa_\ZZ^*)^+$ of
dominant integral weights.
$$\begin{array}{lll}
\hbox{Let \quad $\cP_{-\ell-h}^+$} \quad &\hbox{with basis \quad $B=\{ [X_\lambda]\ |\ \lambda\in (\fa_\ZZ^*)^+\}$}
\quad &\hbox{and bar involution %$\overline{\phantom{T}}\colon \cP^+_{-\ell-h}\to \cP^+_{-\ell-h}$
as in \eqref{barforP}, and} \\
\hbox{let \quad $\cF_\ell$} \quad &\hbox{with basis \quad $\cL = \{ \vert\lambda\rangle\ |\ \lambda\in (\fa_\ZZ^*)^+\}$}
\quad &\hbox{and bar involution %$\overline{\phantom{T}}\colon \cF_\ell\to \cF_\ell$ 
as in \eqref{Fellbar}.}
\end{array}
$$  
Then $\cP_{-\ell-h}^+$ is a KL-module and 
$$\begin{matrix}
\cP^+_{-\ell-h} &\longrightarrow &\cF_\ell \\
%t^{\frac12} &\longmapsto &q \\
[X_\lambda] &\longmapsto &\vert \lambda\rangle
\end{matrix}
\qquad\hbox{is a KL-module isomorphism.}
$$
\end{thm}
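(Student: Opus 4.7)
The plan is to define $\Phi \colon \cF_\ell \to \cP^+_{-\ell-h}$ on generators by $|\lambda\rangle \mapsto [X_\lambda]$ and show that $\Phi$ is a KL-module isomorphism. The key observation is that Proposition \ref{bracketXlambdaproperties}(c) is literally the defining relation \eqref{Fstraightening} of $\cF_\ell$, now holding among the elements $[X_\lambda]$; hence $\Phi$ extends to a well-defined $\ZZ[t^{\frac12}, t^{-\frac12}]$-module homomorphism. Surjectivity follows because, as $\nu$ varies over $A_{-\ell-h}$ and $v \in W$ varies, the elements $\varepsilon_0 X^v \mathbf{1}_\nu$ span $\cP^+_{-\ell-h} = \bigoplus_\nu \varepsilon_0 H \mathbf{1}_\nu$, and each such element coincides (up to a unit in $\ZZ[t^{\frac12},t^{-\frac12}]$ coming from the $W_\nu$-coset adjustment) with $[X_{w_0 v \circ \nu}]$.

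To show that $B = \{[X_\lambda] \mid \lambda \in (\fa_\ZZ^*)^+\}$ is a basis of $\cP^+_{-\ell-h}$, I would invoke Proposition \ref{bracketXlambdaproperties}(b): for $\lambda \in (\fa_\ZZ^*)^+$ strictly dominant (hence regular), $[X_\lambda] = [T_\lambda]$. The strictly dominant weights biject with the indexing set of the parabolic-singular basis \eqref{singularparabolicKLbasis} via $\lambda = w_0 u \circ \nu$ (with $\nu \in A_{-\ell-h}$ and $u$ a minimal-length double-coset representative in $W_0 \backslash W / W_\nu$ satisfying $w_0 u \in W^\nu$), so the $[T_\lambda]$, and hence the $[X_\lambda]$, form a basis. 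Combined with surjectivity of $\Phi$ and the fact that $\{|\lambda\rangle \mid \lambda \in (\fa_\ZZ^*)^+\}$ spans $\cF_\ell$ (equation \eqref{lemma_straightening3} from the sketch of Theorem \ref{Fock_is_KL}), this forces $\Phi$ to be an isomorphism and simultaneously establishes that $\cL$ is a basis of $\cF_\ell$.

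Bar-compatibility requires comparing \eqref{barbracketXlambda} with \eqref{Fellbar}; the two formulas agree under $\Phi$ if and only if $\ell(w_\nu) = N_\lambda$. This identity holds because, under the level $(-\ell-h)$ dot action, the affine reflections stabilizing $\lambda$ are parametrized by pairs $(\alpha, k)$ with $\alpha \in R^+$ and $\langle \lambda+\rho, \alpha^\vee\rangle = k\ell$; for each $\alpha$ such a $k$ exists (uniquely) iff $\langle \lambda+\rho, \alpha^\vee\rangle \in \ell\ZZ$, giving exactly $N_\lambda$ reflections, which equals the number of positive roots of the Coxeter system $W_\nu \cong W_\lambda$, namely $\ell(w_\nu)$. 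For bar-triangularity of $B$ with respect to dominance order — the final ingredient needed for $\cP^+_{-\ell-h}$ to be a KL-module — I would apply \eqref{lemma_straightening3} to $|w_0 \circ \lambda\rangle$, noting that $|R(w_0\circ\lambda)| = \ell(w_0)$ and $|R_\ell(w_0\circ\lambda)| = N_\lambda$, and verify that the scalars from \eqref{Fellbar} and \eqref{lemma_straightening3} cancel to give $\overline{|\lambda\rangle} = |\lambda\rangle + \sum_{\mu < \lambda} c_\mu |\mu\rangle$ with $\mu < \lambda$ in dominance order.

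The main obstacle is the combinatorial bookkeeping underlying the bijection between $(\fa_\ZZ^*)^+$ and the parabolic-singular indexing set, and the verification that $\lambda \mapsto [T_\lambda]$ transports this bijection into the basis of Proposition \ref{doublecosetrepbasis}/\eqref{singularparabolicKLbasis}. The bar-theoretic steps are, by contrast, a matter of bookkeeping of signs and $t^{\frac12}$-powers once the equality $\ell(w_\nu) = N_\lambda$ is in hand.
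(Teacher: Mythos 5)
Your proposal is correct and follows the same overall strategy as the paper's proof: define $\Phi$ on generators, use Proposition \ref{bracketXlambdaproperties}(c) to show it is well-defined, compare \eqref{barbracketXlambda} with \eqref{Fellbar} for bar-compatibility, show $\{[X_\lambda]\mid\lambda\in(\fa_\ZZ^*)^+\}$ is a basis via $[X_\lambda]=[T_\lambda]$ and Proposition \ref{doublecosetrepbasis}, and conclude by combining surjectivity with the spanning property. You have usefully made explicit two points the paper's proof leaves implicit --- the identity $\ell(w_\nu)=N_\lambda$ (which is indeed needed for the $t^{\frac12}$-powers to cancel in the bar-compatibility check, and your reflection-counting argument for it is correct) and the verification that bar-triangularity holds with respect to the \emph{dominance} order on $(\fa_\ZZ^*)^+$ rather than only the Bruhat order on the coset index set --- but these are refinements of the same argument, not a different route.
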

\begin{proof}
By definition (see \eqref{Heckemoduledefn}), 
$\cP^+_{-\ell-h} = \bigoplus_{\nu\in A_{-\ell-h}} \varepsilon_0 H\mathbf{1}_\nu$.  By \S \ref{singparabolicKLsection},
each summand is a KL-module and so $\cP_{-\ell-h}^+$ is a KL-module.

The $\ZZ[t^{\frac12}, t^{-\frac12}]$-module $\cF_\ell$ is generated by $\vert\lambda\rangle$, $\lambda\in \fa_\ZZ^*$.
By definition, these symbols satisfy the relations in \eqref{Fstraightening}.  The $\ZZ[t^{\frac12}, t^{-\frac12}]$-module
$\cP_{-\ell-h}^+$ is generated by the symbols $[X_\lambda]$, $\lambda\in \fa_\ZZ^*$.  By comparison of
the relations in \eqref{Fstraightening} with those in Proposition \ref{bracketXlambdaproperties}(c), there is a surjective $\ZZ[t^{\frac12}, t^{-\frac12}]$-module
homomorphism
\begin{equation}\Phi\colon \cF_\ell \to \cP_{-\ell-h}^+
\quad\hbox{given by}\quad \Phi(\vert\lambda\rangle) = [X_\lambda],
\label{isocFtocP}
\end{equation}
for $\lambda\in \fa_\ZZ^*$.  This homomorphism respects the bar involution since, by 
\eqref{Fellbar}
and
\eqref{barbracketXlambda},
\begin{align*}
\Phi(\overline{\vert\lambda\rangle})
&= \Phi((-1)^{\ell(w_0)}(t^{-\frac12})^{\ell(w_{0})-N_\lambda}\, \vert w_0\circ \lambda \rangle) \\
&= (-1)^{\ell(w_0)}(t^{-\frac12})^{\ell(w_{0})-N_\lambda}\, \Phi(\vert w_0\circ \lambda \rangle) \\
&= (-1)^{\ell(w_0)}(t^{-\frac12})^{\ell(w_{0})-N_\lambda} [X_{w_0\circ\lambda}] \\
&= (-1)^{\ell(w_0)}(t^{-\frac12})^{\ell(w_{0})-N_\lambda}(-1)^{\ell(w_0)}(t^{-\frac12})^{-\ell(w_0)+\ell(w_\nu)}
\overline{[X_\lambda]} \\
&= (-1)^{\ell(w_0)}(t^{-\frac12})^{\ell(w_{0})-N_\lambda}(-1)^{\ell(w_0)}(t^{-\frac12})^{-\ell(w_0)+\ell(w_\nu)}
\overline{\Phi(\vert\lambda\rangle)} = \overline{\Phi(\vert\lambda\rangle)}.
\end{align*}

If $\lambda\in (\fa_\ZZ^*)^+$ then $[X_\lambda] = [T_\lambda]$.  Thus, by 
Proposition \ref{bracketTlambdaproperties} (see also Proposition \ref{doublecosetrepbasis}), the set
$\{ [X_\lambda]\ |\ \lambda\in (\fa_\ZZ^*)^+\}$ is a basis of $\cP_{-\ell-h}$.  Since the $\Phi_\ell$ image of
$\{ \vert \lambda\rangle\ |\ \lambda\in (\fa_\ZZ^*)^+\}$ is linearly independent in $\cP_{-\ell-h}$ this set 
must be linearly independent in $\cF_\ell$ and $\Phi_\ell$ is injective.
Since $\cF_\ell$ is spanned by $\{\vert \lambda\rangle\ |\ \lambda\in (\fa_\ZZ^*)^+\}$ then 
$\Phi_\ell$ is a KL-module isomorphism.
\end{proof}

%As a KL-module, $\cP^+_{-\ell-h}$ has standard basis
%$\{ [X_\lambda]\ |\ \lambda\in (\fa_\ZZ^*)^+\}$.
%$$\hbox{Let}\qquad
%\{ [C_\lambda]\ |\ \lambda\in (\fa_\ZZ^*)^+\} \qquad\hbox{be the KL-basis of $\cP^+_{-\ell-h}$.}
%$$

The KL-module $\cF_\ell$ has 
\begin{equation}
\hbox{standard basis}\quad
\{ \vert\lambda\rangle\ |\ \lambda\in (\fa_\ZZ^*)^+\}
\qquad\hbox{and}\qquad
\hbox{KL-basis}\quad \{ C_\lambda\ |\ \lambda\in (\fa_\ZZ^*)^+\}.
\end{equation}
For $\mu, \lambda\in (\fa_\ZZ^*)^+$ define $p_{\mu\lambda}, d_{\lambda\mu}\in \ZZ[t^{\frac12}, t^{-\frac12}]$ by
\begin{equation}
C_\mu = \vert \mu\rangle + \sum_\mu p_{\mu\lambda} \vert\lambda\rangle,
\qquad\hbox{and}\qquad
\vert \lambda\rangle  = C_\lambda + \sum_{\mu} d_{\lambda\mu}C_\mu.
\label{Felltransitionmatrices}
\end{equation}
%If $\nu\in A_{-\ell-h}$ let $W_\nu$ be the stabilizer of $\nu$ under the dot action of $W$
%and let $W^\nu$ be the set of minimal length representatives of the cosets in $W/W_\nu$.
%Let $W_0$ be as in \eqref{???} and let ${}^0W$ be the set of minimal length representatives
%of the cosets in $W_0\backslash W$.
The following theorem relates the $p_{\mu\lambda}$ to affine KL-polynomials and the $d_{\lambda\mu}$ to
decomposition numbers of Weyl modules for the quantum group at an $\ell$th root of unity.  It is a
generalization of a type $GL_n$ statement found, for example, in \cite[Thm.\ 6.4]{Sh}.

\begin{thm} Fix $\ell\in \ZZ_{>0}$ and let $q^2$ be a primitive $\ell$th root of unity in $\CC$.  
Let $U_q(\fg)$ be the
Drinfeld-Jimbo quantum group corresponding to the weight lattice $\fa_\ZZ^*$, the Weyl group $W_0$ and the positive roots $R^+$.
Let $L_q(\mu)$ be the simple module of highest weight $\mu$ for the quantum group $U_q(\fg)$ and
let
$$\Delta_q(\lambda)=\Delta_q(\lambda)^{(0)} 
\supseteq \Delta_q(\lambda)^{(1)}\supseteq
\cdots
\qquad\hbox{be the Jantzen filtration}
$$
of the Weyl module $\Delta_q(\lambda)$ of highest weight $\lambda$ for $U_q(\fg)$.

Let $W$ be the affine Weyl group and let $\lambda,\mu\in (\fa_\ZZ^*)^+$ and let $p_{\mu\lambda}$ and $d_{\lambda\mu}$ be as given in \eqref{Felltransitionmatrices}.
\item[(a)] If $\lambda$ and $\mu$ are not in the same $W$-orbit for
the level $(-\ell-h)$ dot-action of $W$ on $\fa_\ZZ^*$ then $d_{\lambda\mu}=0$ and $p_{\mu\lambda} = 0$.  
\item[(b)] If $\lambda$ and $\mu$ are in the same $W$-orbit then let $\nu\in A_{-\ell-h}$ and $x,y\in W$ be such that
$$\lambda = w_0x\circ \nu, \qquad \mu = w_0y\circ\nu,\qquad
x,y\in {}^0W\quad\hbox{and}\quad w_0x,w_0y\in W^\nu,
$$
where $w_0$ is the longest element of the Weyl group $W_0$,
$W_\nu$ is the stabilizer of $\nu$ under the dot action of $W$,
$W^\nu$ is the set of minimal length representatives of cosets in $W/W_\nu$
and
${}^0W$ is the set of minimal length representatives of cosets in $W_0\backslash W$.

\noindent
Then 
\begin{align*}
p_{\mu\lambda} 
& (-1)^{\ell(w_0 x)-\ell(w_0 y)} P^\nu_{w_0 y,w_0 x}(t^{\frac12})
\qquad \hbox{and}  \\
d_{\lambda\mu}
&=\left(\sum_{j\in \ZZ_{\ge 0}} 
t^j \dim\Big(\Hom\Big(\frac{\Delta_q(\lambda)^{(j)}}{ \Delta_q(\lambda)^{(j+1)}},
L_q(\mu)\Big)\Big) \right),
%\quad\hbox{and}\quad
%p_{\mu\lambda} = (-1)^{\ell(w_0 x)-\ell(w_0 y)} P_{w_0 y,w_0 x}(t^{\frac12}),
\end{align*}
 where $P^\nu_{w_0 y,w_0 x}(t^{\frac12})$ is the (parabolic singular)
Kazhdan-Lusztig polynomial
 (see \eqref{LusztigSP}) for the affine Hecke algebra $H$ corresponding to $W$ 
(see \eqref{LusztigC} and \eqref{affHeckedefn}).
\end{thm}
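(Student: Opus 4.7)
The plan is to transport the question across two isomorphisms: first, the KL-module isomorphism $\Phi\colon \cF_\ell \to \cP^+_{-\ell-h}$ of Theorem~\ref{abstracttoHecke}, and second, the Grothendieck-group isomorphism~\eqref{QG} identifying $\cP^+_{-\ell-h}$ with the finite-dimensional representation ring of $U_q(\mathring{\fg})$. Since $\Phi$ is an isomorphism of KL-modules, it sends standard basis to standard basis and canonical basis to canonical basis, so both transition matrices $(p_{\mu\lambda})$ and $(d_{\lambda\mu})$ can be computed entirely inside $\cP^+_{-\ell-h} = \bigoplus_{\nu\in A_{-\ell-h}} \varepsilon_0 H \mathbf{1}_\nu$ from~\eqref{Heckemoduledefn}.

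Part~(a) is immediate from this orthogonal decomposition. By Propositions~\ref{bracketTlambdaproperties} and~\ref{bracketXlambdaproperties}(b), a dominant $\lambda = w_0 x\circ \nu$ satisfies $\Phi(\vert\lambda\rangle) = [X_\lambda] = [T_\lambda] = \varepsilon_0 T_x\mathbf{1}_\nu$, which lies in the single summand indexed by $\nu$. Since the bar involution respects the decomposition, the KL basis of the whole is the disjoint union of the KL bases of the summands. If $\lambda$ and $\mu$ lie in distinct $W$-orbits under the level $(-\ell-h)$ dot action, their images land in different summands and no cross-terms can appear, forcing $p_{\mu\lambda} = d_{\lambda\mu} = 0$.

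For the $p_{\mu\lambda}$ formula in~(b), assume $\lambda$ and $\mu$ lie in the same orbit, so that $\Phi(\vert\lambda\rangle)$ and $\Phi(\vert\mu\rangle)$ live in a common summand $\varepsilon_0 H\mathbf{1}_\nu$, whose KL basis $\{C_{w_0 x}\mathbf{1}_\nu\}$ is described in~\eqref{singularparabolicKLbasis}. Uniqueness of the KL basis forces $\Phi(C_\lambda) = C_{w_0 x}\mathbf{1}_\nu$, and the expansion~\eqref{LusztigSP} then reads off the coefficient of $\varepsilon_0 T_y\mathbf{1}_\nu = \Phi(\vert\mu\rangle)$ as $(-1)^{\ell(w_0 x)-\ell(w_0 y)} P^\nu_{w_0 y, w_0 x}(t^{\frac12})$. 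For $d_{\lambda\mu}$, I would invert the expansion to obtain $\varepsilon_0 T_x\mathbf{1}_\nu = C_{w_0 x}\mathbf{1}_\nu + \sum_\mu d_{\lambda\mu} C_{w_0 y}\mathbf{1}_\nu$, transport this via~\eqref{QG} to the identity $[\Delta_q(\lambda)] = [L_q(\lambda)] + \sum_\mu d_{\lambda\mu}[L_q(\mu)]$ in $K(\text{fd-}U_q(\mathring{\fg})\text{-mod})$, and compare with the definition of the class $[\Delta_q(\lambda)]$ recalled in Section~3.4. The semisimplicity of Jantzen-filtration quotients (Shan~\cite{Sh}) converts each composition multiplicity into a $\Hom$-dimension.

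The main obstacle will be reconciling conventions so that the resulting formula reads precisely $\sum_j t^j \dim\Hom(\Delta_q(\lambda)^{(j)}/\Delta_q(\lambda)^{(j+1)}, L_q(\mu))$ rather than a sum with half-integer exponents: one must verify that the Jantzen-filtration grading used by Shan in~\eqref{QG} is compatible with the $t^{\frac12}$-grading on $\cP^+_{-\ell-h}$ in such a way that only even Jantzen degrees contribute, allowing a clean reindexing $j = i/2$. Once this bookkeeping is settled, the theorem is a formal consequence of the chain $\cF_\ell \cong \cP^+_{-\ell-h}$ and the representation-theoretic identification~\eqref{QG}, together with the already-established parabolic-singular Kazhdan--Lusztig expansion~\eqref{LusztigSP}.
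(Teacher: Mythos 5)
Your proof is correct and takes essentially the same route as the paper: transport across the KL-module isomorphism $\Phi\colon\cF_\ell\to\cP^+_{-\ell-h}$ of Theorem~\ref{abstracttoHecke}, read off $p_{\mu\lambda}$ from \eqref{LusztigSP} inside a single summand $\varepsilon_0 H\mathbf{1}_\nu$, and deduce $d_{\lambda\mu}$ from the Grothendieck-group identification (QG). The $t^{1/2}$-versus-$t$ grading concern you flag is real --- the coefficient in the definition of $[\Delta_q(\cdot)]$ preceding (QG) carries $(t^{1/2})^i$ while the theorem states $t^j$ --- but the paper's own proof dispatches the $d_{\lambda\mu}$ step in a single sentence without addressing it, so this reflects a normalization inconsistency in the paper rather than a gap in your argument.
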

\begin{proof} By definition (see \eqref{Heckemoduledefn}), 
$\cP^+_{-\ell-h} = \bigoplus_{\nu\in A_{-\ell-h}} \varepsilon_0 H\mathbf{1}_\nu$.  The analysis in 
\S \ref{singparabolicKLsection} applies to each of the summands $\varepsilon_0 H\mathbf{1}_\nu$ to give that, for $\lambda,\mu\in (\fa_\ZZ^*)^+$,
\begin{align*}
\Phi(\vert\lambda\rangle) = [X_\lambda] = [T_\lambda] = \varepsilon_0 T_y \mathbf{1}_\nu
\qquad\hbox{and}\qquad
\Phi(C_\mu) = C_{w_0x}\mathbf{1}_\nu,
\end{align*}
where $\Phi\colon \cF_\ell \to \cP_{-\ell-h}^+$ is the KL-module isomorphism from \eqref{isocFtocP} and
$x,y\in W$ and $\nu\in A_{-\ell-h}$ are as defined in the statement of (b).
In particular, by \eqref{LusztigSP}, 
the transition matrix between these bases is given by
\begin{align*}
\Phi(C_\mu) 
& = C_{w_0 x}\mathbf{1}_\nu 
=\sum_{w_0 y\le w_0 x\atop y\in {}^0 W, w_0y\in W^\nu}
(-1)^{\ell(w_0 x)-\ell(w_0 y)} P^\nu_{w_0 y,w_0 x}(t^{\frac12}) \varepsilon_0 T_y\mathbf{1}_\nu  \\
& =\sum_{w_0 y\le w_0 x\atop y\in {}^0 W, y\in W^\nu}
(-1)^{\ell(w_0 x)-\ell(w_0 y)} P^\nu_{w_0 y,w_0 x}(t^{\frac12}) \Phi(\vert \lambda\rangle), 
\end{align*}
and, since $\Phi$ is an isomorphism, this establishes the formula for $p_{\mu\lambda}$.
The formula for $d_{\lambda\mu}$ is then a consequence of the isomorphism of (QG) given at the
end of Section 3.
\end{proof}

%\begin{remark}  If $W = Q \rtimes W_0$ and $\lambda$ is completely regular
%WHAT DOES THIS WORD MEAN??? then $\lambda = w\circ \nu = w_0y\circ \nu$ 
%with $W_\nu= \{1\}$ and both $w=w_0y$ (in $W$) and $u$ (in $A_\ell$) are unique.  In this case
%$$[T_\lambda] = \varepsilon_0 T_y \mathbf{1}_\nu = \varepsilon _0 T_y
%\qquad\hbox{and}\qquad \varepsilon_0 X^y \mathbf{1}_\nu = \varepsilon_0 X^y.$$
%\end{remark}

\newpage

\begin{align*}
\includegraphics[height=10cm]{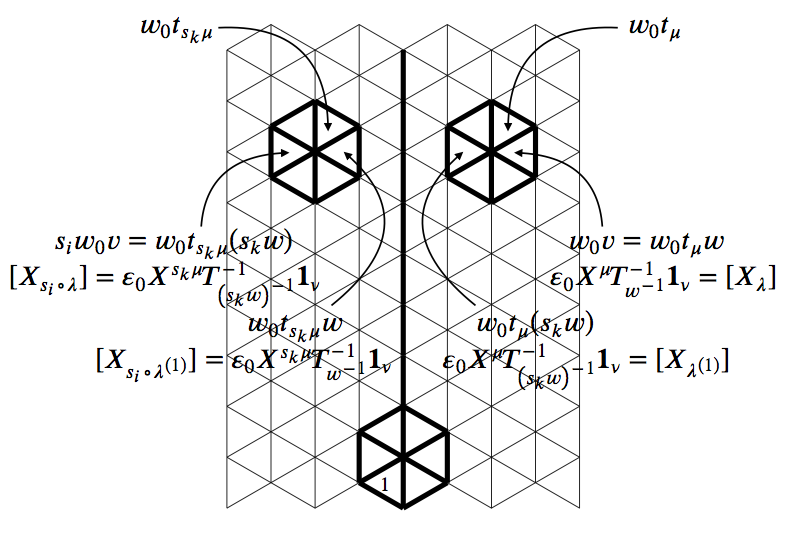} \\
%\hbox{Figure for Case 3a: 
%$0 < \langle \ell \mu+\rho, \alpha_i^\vee\rangle 
%< \langle \lambda+\rho, \alpha_i^\vee\rangle
%< \langle \ell \mu+\rho, \alpha_i^\vee\rangle +\ell.$}
%\qquad\qquad
%\end{align*}
%\begin{align*}
\includegraphics[height=10cm]{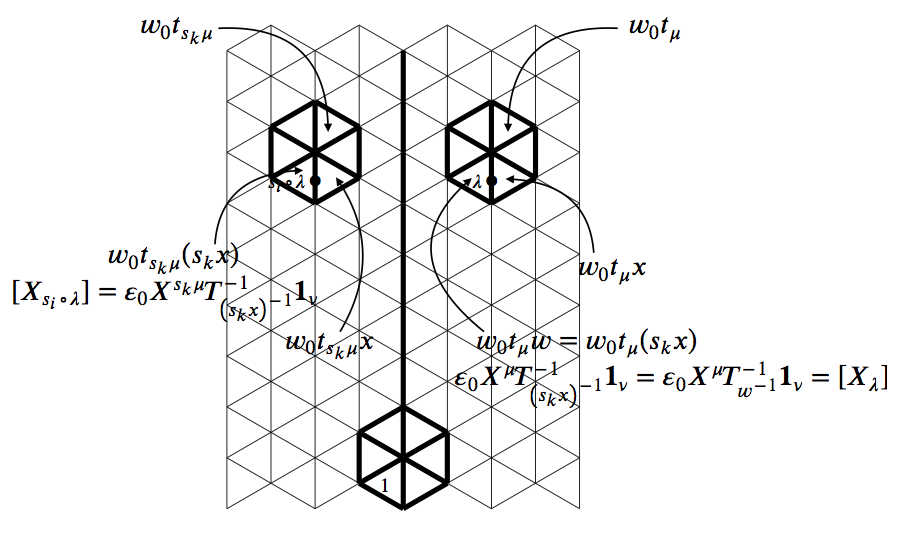} \\
\hbox{Case 1R:
$0 \le \langle \ell(-w_0\mu), \alpha_i^\vee\rangle - \ell 
< \langle \ell(-w_0\mu), \alpha_i^\vee\rangle 
< \langle \lambda+\rho, \alpha_i^\vee\rangle
< \langle \ell (-w_0\mu), \alpha_i^\vee\rangle +\ell$ and}\\
\hbox{\qquad $0 
< \langle \ell (-w_0\mu), \alpha_i^\vee\rangle -\ell
< \langle \ell (-w_0\mu), \alpha_i^\vee\rangle 
= \langle \lambda+\rho, \alpha_i^\vee\rangle
< \langle \ell (-w_0\mu), \alpha_i^\vee\rangle +\ell.$}
\qquad\qquad
\end{align*}

\newpage

\begin{align*}
\includegraphics[height=10cm]{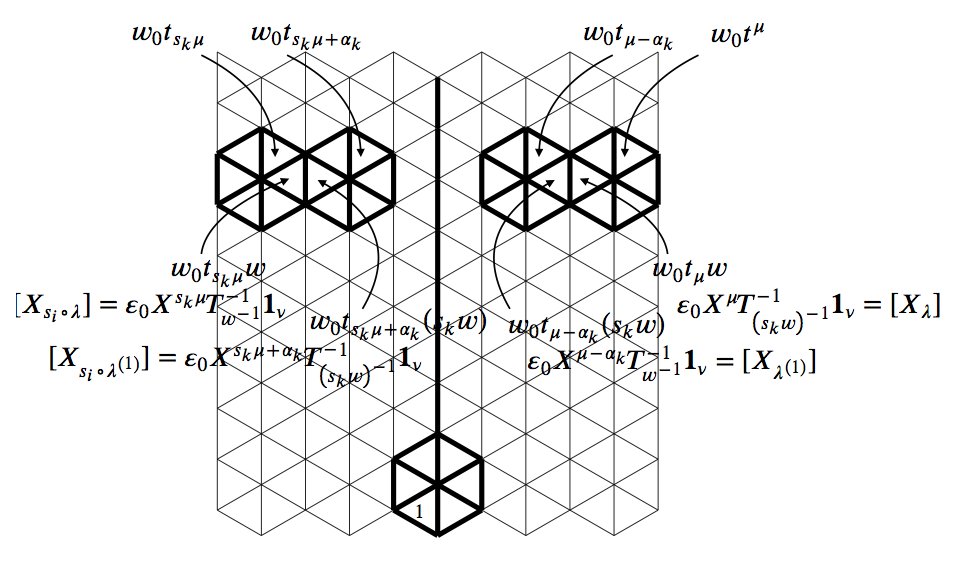} \\
\includegraphics[height=10cm]{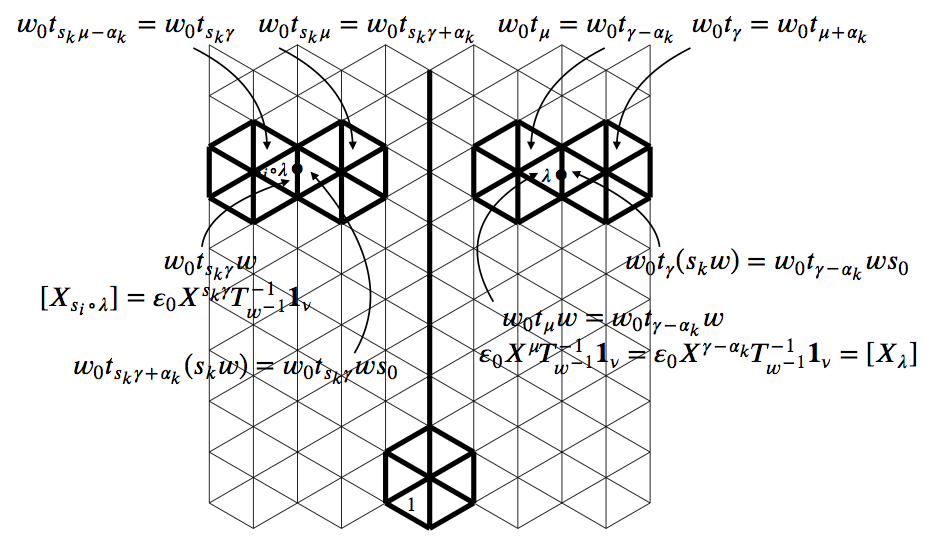} \\
\hbox{Case 1L: 
$0<\langle \ell(-w_0\mu), \alpha_i^\vee\rangle -\ell
< \langle \lambda+\rho, \alpha_i^\vee\rangle
< \langle \ell(-w_0\mu), \alpha_i^\vee\rangle
< \langle \ell(-w_0\mu), \alpha_i^\vee\rangle+\ell
$ and} \\
\hbox{
$0<\langle \ell(-w_0\mu), \alpha_i^\vee\rangle - \ell 
= \langle \lambda+\rho, \alpha_i^\vee\rangle
< \langle \ell(-w_0\mu), \alpha_i^\vee\rangle
<\langle \ell(-w_0\mu), \alpha_i^\vee\rangle+\ell.$} 
\end{align*}

\newpage

\begin{align*}
\includegraphics[height=10cm]{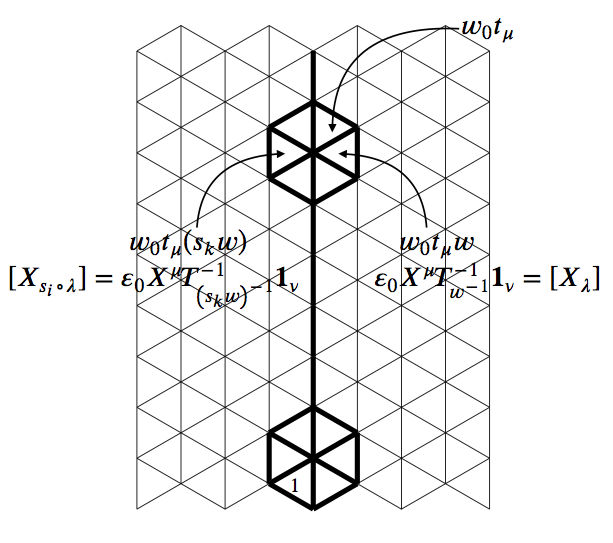} \\
\includegraphics[height=10cm]{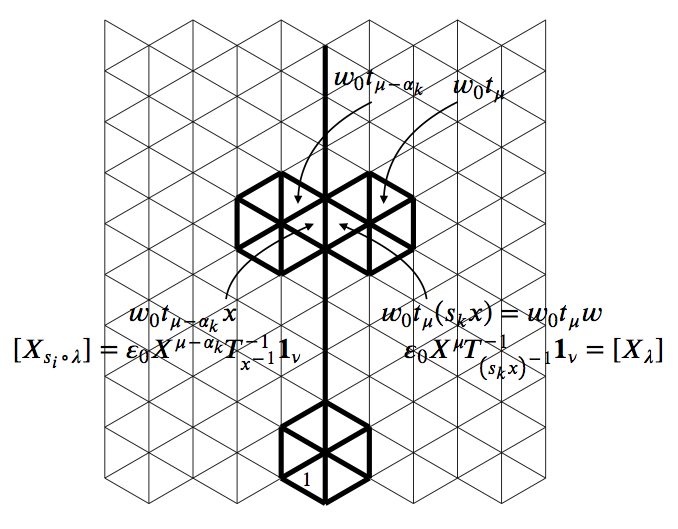} \\
\hbox{Case 2R: 
$%\langle \ell\mu+\rho, \alpha_i^\vee\rangle-\ell <
0 = \langle \ell(-w_0\mu)+\rho, \alpha_i^\vee\rangle 
< \langle \lambda+\rho, \alpha_i^\vee\rangle
< \ell = \langle \ell(-w_0\mu)+\rho, \alpha_i^\vee\rangle +\ell$ and} \\
\hbox{Case 2L: 
$0 = \langle \ell(-w_0\mu)+\rho, \alpha_i^\vee\rangle -\ell
< \langle \lambda+\rho, \alpha_i^\vee\rangle
< \ell = \langle \ell(w_0\mu)+\rho, \alpha_i^\vee\rangle$}\phantom{\hbox{and}}
\end{align*}

\end{document}